\theoremstyle{plain}
\newtheorem{theorem}{Theorem}[section]
\newtheorem{maintheorem}{Theorem}
\newtheorem{question}[theorem]{Question}
\newtheorem{proposition}[theorem]{Proposition}
\newtheorem{lemma}[theorem]{Lemma}
\newtheorem{corollary}[theorem]{Corollary}
\theoremstyle{definition}
\newtheorem{definition}[theorem]{Definition}
\newtheorem{remark}[theorem]{Remark}
\newtheorem{convention}[theorem]{Convention}
\newcommand{\nc}{\newcommand}
\nc{\dmo}{\DeclareMathOperator}
\nc{\Q}{\mathbb{Q}}
\nc{\F}{\mathbb{F}}
\nc{\R}{\mathbb{R}}
\nc{\Z}{\mathbb{Z}}
\nc{\C}{\mathbb{C}}
\nc{\N}{\mathbb{N}}
\nc{\Ell}{\mathcal{L}}
\nc{\M}{\mathcal{M}}
\nc{\K}{\mathcal{K}}
\nc{\I}{\mathcal{I}}
\nc{\T}{\mathcal T}
\nc{\U}{\mathcal U}
\nc{\disk}{\mathbb{D}}
\nc{\hyp}{\mathbb{H}}
\nc{\CP}{\mathbb{CP}}
\nc{\RP}{\mathbb{RP}}
\nc{\cS}{\mathcal{S}}
\dmo{\Mod}{Mod}
\dmo{\PMod}{PMod}
\dmo{\LMod}{LMod}
\dmo{\Diff}{Diff}
\dmo{\Homeo}{Homeo}
\dmo{\dist}{dist}
\dmo\BDiff{BDiff}
\dmo\SO{SO}
\dmo\Hom{Hom}
\dmo\SL{SL}
\dmo\Sp{Sp}
\dmo\rank{rank}
\dmo\sig{sig}
\dmo\Out{Out}
\dmo\Aut{Aut}
\dmo\Inn{Inn}
\dmo\GL{GL}
\dmo\PSL{PSL}
\dmo\EU{EU}
\dmo\BHomeo{BHomeo}
\dmo\EHomeo{EHomeo}
\dmo\EDiff{EDiff}
\dmo\Disc{Disc}
\nc\Sig{\Sigma}
\dmo\Teich{Teich}
\dmo\Fix{Fix}
\nc{\pair}[1]{\left\langle #1 \right\rangle}
\nc{\abs}[1]{\left| #1 \right|}
\nc{\action}{\circlearrowright}
\nc{\norm}[1]{\left | \left | #1 \right | \right |}
\nc{\abcd}[4]{\left(\begin{array}{cc} #1 & #2 \\ #3 & #4 \end{array}\right)}
\dmo{\Isom}{Isom}
\nc{\normal}{\vartriangleleft}
\dmo{\Vol}{Vol}
\dmo{\im}{Im}
\dmo{\Push}{Push}
\dmo{\Conf}{Conf}
\dmo{\PConf}{PConf}
\dmo{\id}{id}
\dmo{\Jac}{Jac}
\dmo{\Pic}{Pic}
\dmo{\Stab}{Stab}
\dmo{\Arf}{Arf}
\dmo{\End}{End}
\dmo{\Gal}{Gal}
\dmo{\lcm}{lcm}
\dmo{\ab}{ab}
\dmo{\opp}{op}
\dmo{\SU}{SU}
\dmo{\OT}{\Omega \mathcal{T}}
\dmo{\OM}{\Omega \mathcal{M}}
\dmo{\spin}{spin}
\dmo{\even}{even}
\dmo{\odd}{odd}
\dmo{\comp}{\mathcal{H}}
\dmo{\Mgk}{\mathcal{M}_{g, \underline{\kappa}}}
\dmo{\orb}{orb}
\dmo{\AJ}{AJ}
\dmo{\Ck}{\mathsf{C}(\underline{\kappa})}
\dmo{\Int}{Int}
\dmo{\pr}{pr}
\dmo{\lab}{lab}
\dmo{\Sym}{Sym}
\dmo{\Ann}{Ann}
\dmo{\Rad}{Rad}
\nc{\Span}[1]{\operatorname{Span}(#1)}
\renewcommand{\epsilon}{\varepsilon}
\renewcommand{\tilde}{\widetilde}
\renewcommand{\le}{\leqslant}
\nc{\coloneq}{\mathrel{\mathop:}\mkern-1.2mu=}
\nc{\margin}[1]{\marginpar{\scriptsize #1}}
\nc{\para}[1]{\medskip\noindent\textbf{#1.}}
\definecolor{myblue}{RGB}{102,153, 255}
\definecolor{myred}{RGB}{204,0,0}
\definecolor{mygreen}{RGB}{0,204,0}
\definecolor{myorange}{RGB}{255,102,0}
\nc{\red}[1]{\textcolor{myred}{#1}}
\nc{\blue}[1]{\textcolor{myblue}{#1}}
\nc{\proofof}[1]{\noindent {\em Proof (of #1).}}
\nc{\lb}{[}
\nc{\rb}{]}
\title{Simple closed curves in stable covers of surfaces}
\author{Nick Salter}
\email{nsalter@nd.edu}
\thanks{The author is supported by NSF Award No. DMS-2003984.}
\address{Department of Mathematics, University of Notre Dame, Hurley Hall, Notre Dame, IN 46556}
\date{September 27, 2021}
\begin{document}
\maketitle	
\begin{abstract}
    Let $f: X \to Y$ be a regular covering of a surface $Y$ of finite type with nonempty boundary, with finitely-generated (possibly infinite) deck group $G$. We give necessary and sufficient conditions for an integral homology class on $X$ to admit a representative as a connected component of the preimage of a nonseparating simple closed curve on $Y$, possibly after passing to a ``stabilization'', i.e. a $G$-equivariant embedding of covering spaces $X \hookrightarrow X^+$.
\end{abstract}
	
\section{Introduction}

Let $X$ be an oriented surface of finite type. It is a classical fact that $H_1(X;\Z)$ is spanned by {\em geometric} classes, i.e. elements $v \in H_1(X;\Z)$ represented as $v = [\gamma]$ for $\gamma \subset X$ some connected simple closed curve. Moreover, there is a simple algebraic criterion to determine if $v \in H_1(X;\Z)$ admits such a representative: if $X$ has at most one boundary component, it is necessary and sufficient that $v$ be {\em integrally primitive}: any expression of the form $v = k v'$ necessarily has $\abs{k} = 1$. 

The situation becomes vastly more complicated when one moves to the {\em relative} setting, and considers a covering $f: X \to Y$ of surfaces, typically regular with covering group $G$. Here, one is interested in the class of {\em relatively geometric} elements: classes $v \in H_1(X;\Z)$ represented by a connected simple closed curve $\gamma$ for which moreover $f(\gamma)$ is simple on $Y$. The facts mentioned above now become questions:
\begin{question}\label{question:span}
For which covers $f: X \to Y$ is the span $H_1(X;A)^{scc}$ of relatively-geometric classes equal to all of $H_1(X;A)$, where $A$ is an abelian group (typically $A = \Z, \Q, \C$)?
\end{question}
\begin{question}\label{question:individual}
For a cover $f: X \to Y$, what is an algebraic characterization of the relatively geometric elements of $H_1(X;\Z)$?
\end{question}
Aside from the intrinsic merit of these questions as being foundational to the study of equivariant geometric topology, they have been encountered in the study of the moduli space of curves and are closely intertwined with the {\em Ivanov conjecture} and with the representation theory of the mapping class group and its connection to the theory of arithmetic groups. See, e.g. \cite{PW,MP,FH2,looijenga,GLLM} for further discussion of these topics.

In recent years, substantial (although by no means exhaustive) progress has been made on \Cref{question:span} for the class of finite covering groups: work of Farb--Hensel, Koberda--Santharoubane, and Malestein--Putman \cite{FH,KS,MP} has highlighted the important role played by the representation theory of $G$, and has furnished examples of covers for which $H_1(X;A)^{scc}$ is a strict subgroup of $H_1(X;A)$, both for $A = \Z$ and $A=\Q$. See also \cite{flamm} for an investigation of the analogue of \Cref{question:individual} for graphs.

In light of the delicate nature of the emerging answer to \Cref{question:span}, it might seem overly audacious to attack the much more refined question posed in \Cref{question:individual}. However, the purpose of this paper is to do precisely this, with the caveat that our answer requires us to {\em stabilize} our covers by equivariantly embedding them in certain larger covers (see \Cref{definition:stabilization} for our precise conventions). In the stable setting, we are able to completely characterize relatively geometric classes that project onto {\em nonseparating} curves in $Y$ (it is an initially-surprising fact that a nonseparating curve $\gamma \subset X$ can project onto a separating curve in $Y$; the analysis of this class of curves is substantially more intricate and is postponed to future work). Remarkably, our techniques impose no requirements on the covering group $G$ whatsoever, other than that $G$ admit a surjection from a fundamental group of a surface - that is to say, $G$ can be an arbitrary finitely-generated group.

We identify four purely algebraic conditions necessary for a class $v \in H_1(X;\Z)$ to be relatively geometric and nonseparating in the above sense; these are given in \Cref{maintheorem:necessary} (see the paragraph following \Cref{maintheorem:necessary} for an overview of their meaning). Our main result \Cref{maintheorem:sufficient} shows that these conditions are sufficient in the stable setting.

\begin{maintheorem}[Relative geometricity: stably-sufficient conditions]\label{maintheorem:sufficient}
Let $Y$ be a connected oriented surface of finite type and nonempty boundary, let $G$ be a finitely-generated group and let $f: X \to Y$ be a connected regular $G$-covering classified by a surjective homomorphism $\phi: \pi_1(Y) \to G$. Let $v \in H_1(X;\Z)$ be given with with $f_*(v) \in H_1(Y;\Z)$ nonseparating. Then there exists a stabilization $f^+: X^+ \to Y^+$ on which $v$ is relatively geometric if and only if the necessary conditions of \Cref{maintheorem:necessary} hold. 
\end{maintheorem}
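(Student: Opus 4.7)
The plan is to construct a geometric representative of $v$ in a suitable stabilization by first realizing the projection $f_*(v)$ by a simple nonseparating curve on $Y$, then modifying that curve via $G$-equivariant surgeries performed inside stabilizations until a component of its preimage realizes $v$ on the nose. By the necessary conditions of \Cref{maintheorem:necessary}, $f_*(v) \in H_1(Y;\Z)$ is integrally primitive, so since $Y$ has nonempty boundary there exists a nonseparating simple closed curve $\bar\gamma \subset Y$ with $[\bar\gamma] = f_*(v)$. The preimage decomposes as $f^{-1}(\bar\gamma) = \bigsqcup \gamma_i$, a disjoint union of simple closed curves indexed by $G/H$, where $H \le G$ is the stabilizer of a chosen component $\gamma_0$ (equivalently, the image under $\phi$ of $\pi_1(\bar\gamma)$). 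The necessary conditions should pick out a candidate conjugacy class of $H$ from the algebraic data of $v$, and I would arrange $\bar\gamma$ from the start to realize it.

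\textbf{Reduction to an error term.} Fixing $\gamma_0$, write $v = [\gamma_0] + w$. The $H$-invariance and orbit-sum conditions of \Cref{maintheorem:necessary} then force the error $w$ to lie in a specific sub-$G$-module $W \le H_1(X;\Z)$: roughly, classes whose $G$-orbit sum is zero in $H_1(Y;\Z)$ and which are $H$-invariant. The rest of the proof consists in modifying $\bar\gamma$ inside a stabilization so as to kill $w$ without disturbing the properties already achieved.

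\textbf{Band-sum surgeries and stabilization.} The main geometric tool is a band-sum surgery: given an oriented embedded arc $\delta \subset Y \setminus \bar\gamma$ with both endpoints on $\bar\gamma$, one produces a new nonseparating simple closed curve $\bar\gamma'$ that is homologous to $\bar\gamma$ in $Y$ but whose preimage components differ from those of $\bar\gamma$ by a class in $W$ computable from $[\delta]$ and the $H$-cosets determined by lifts of the endpoints. Stabilization enters in two roles: first, it provides extra handles in $G$-orbits on which arcs realizing any prescribed generator of $W$ can be drawn, and second, it provides disjoint ``workspaces'' so that surgeries realizing different generators of $W$ can be performed sequentially without interference. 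After stabilizing sufficiently, I would represent $w$ as a finite sum $w = \sum w_j$ of elements of $W$ coming from elementary arcs, and perform the corresponding band sums one at a time.

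\textbf{Main obstacle.} I expect the core difficulty to be the realization claim in the previous paragraph: showing that after a finite stabilization $f^+: X^+ \to Y^+$, the ``geometric realization map'' sending a disjoint embedded arc system in $Y^+ \setminus \bar\gamma$ to the corresponding change in preimage-component class has image containing every element of $W$ allowed by \Cref{maintheorem:necessary}. This is essentially an equivariant connectivity/surjectivity problem, and the delicate part is controlling how much stabilization is needed for a given $v$ when $G$ is infinite (where ``spreading out'' arguments replace finite transitivity). Once this realization statement is proved, the remainder --- iteratively performing disjoint band sums, verifying at each step that simplicity and nonseparation are preserved, and identifying $[\gamma^+] = v$ for the resulting component $\gamma^+ \subset X^+$ of the preimage of the final curve $\bar\gamma^+ \subset Y^+$ --- is a careful but routine bookkeeping exercise.
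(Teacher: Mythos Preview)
Your outline inverts the paper's strategy: you begin with a simple curve $\bar\gamma$ downstairs and attempt to steer its preimage into the class $v$ via band sums, whereas the paper begins upstairs with a connected $G_v$-invariant simple closed curve $\delta_0$ already in the class $v$ (this is exactly what coherence buys, via \Cref{lemma:coherenceconnected}) and then works to make $f(\delta_0)$ simple. The gap in your proposal is exactly where you locate it, but it is fatal rather than merely technical: the ``realization claim'' that band sums on $\bar\gamma$ stably span every admissible error $w$ is the entire content of the theorem, and you offer no mechanism for it. A band sum along an arc $\delta\subset Y\setminus\bar\gamma$ alters the preimage-component class in a tightly constrained way, can disconnect $\bar\gamma$, and generally changes $\phi(\bar\gamma_\bullet)$ and hence the stabilizer; there is no evident reason these moves generate an arbitrary $w$, even stably. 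Your description of the necessary conditions as ``$H$-invariance and orbit-sum conditions'' also does not match the actual list (isotropy $\pair{v,v}=0$, parity $q(v)=0$, primitivity $I_v=\Z[G]\zeta_v$, coherence), so it is unclear how they would enter your argument. Note too that your first step already stumbles when $|G_v|>1$: then $f_*(v)=|G_v|[\gamma]$ is \emph{not} integrally primitive, so you cannot take $[\bar\gamma]=f_*(v)$.

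The paper's route is as follows. Starting from $\delta_0$ as above, one resolves each self-crossing of $f(\delta_0)$ by rerouting through a fresh basic $1$-handle (the resolution process of \Cref{section:resolution}); this produces a \emph{relatively geometric} class $v+\zeta_v w$, where $w=\sum_i(x_i+g_i y_i)$ is a purely-unital vector supported on the new handles. The local sheet indices $g_i$ can be chosen (\Cref{lemma:resolution}, \Cref{lemma:choosereps}) so that isotropy and parity of $v$ become precisely the conditions $\sum(g_i-g_i^{-1})=0$ and evenness of $2$-torsion appearances needed to make $w$ itself relatively geometric (\Cref{lemma:purestable}). Primitivity then furnishes a relatively geometric $u$ with $\pair{u,v}=\zeta_v$, and a product of two lifted Dehn twists $\tilde T_\epsilon\tilde T_\eta^{-1}$ (\Cref{lemma:dehntwist}) sends the relatively geometric class $v+\zeta_v w$ to $v$. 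The idea you are missing is this final step: one does not realize the error $w$ by surgeries on the base curve at all, but rather realizes $w$ \emph{independently} on disjoint handles and then cancels it with an explicit equivariant diffeomorphism.
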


\begin{maintheorem}[Relative geometricity: stably-necessary conditions]\label{maintheorem:necessary}
Let $Y$ be a connected oriented surface of finite type and nonempty boundary, let $G$ be a finitely-generated group and let $f: X \to Y$ be a connected regular $G$-covering classified by a surjective homomorphism $\phi: \pi_1(Y) \to G$. Suppose $v \in H_1(X;\Z)$ is relatively geometric and that $f_*(v) \in H_1(Y;\Z)$ is nonseparating. Then there is a stabilization $f^+: X^+ \to Y^+$ such that the following must hold:
        \begin{enumerate}
            \item\label{condition:isotropy} $\pair{v,v} = 0$ (isotropy)
            \item\label{condition:parity} $q(v) = 0$ (parity)
            \item\label{condition:ideal} $I_v = \Z[G] \zeta_v$ (primitivity)
            \item\label{item:stabgen} $G_v = \pair{\phi_{v,*}(\pi_{v,*}(v)/\abs{G_v})}$ (coherence)
        \end{enumerate}
\end{maintheorem}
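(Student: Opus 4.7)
The plan is to work with a topological witness. Let $\gamma \subset X$ be a connected simple closed curve representing $v$, with $\bar\gamma \coloneq f(\gamma) \subset Y$ simple and nonseparating, and derive each of the four conditions from the local structure of $\gamma$ together with the deck action on the preimage $f^{-1}(\bar\gamma)$.

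The first two conditions should follow quickly. Isotropy $\langle v,v\rangle = 0$ is the classical fact that a connected simple closed curve on an oriented surface has zero self-intersection in homology. The parity condition $q(v) = 0$ should hold essentially by design of $q$: I expect the form $q$ to be normalized so that it vanishes on homology classes of simple closed curves, with the proof amounting to unwinding this normalization for the class $[\gamma]$.

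For the primitivity condition, I would exploit the structure of $f^{-1}(\bar\gamma)$. Since $\bar\gamma$ is simple and nonseparating, a regular neighborhood $A \subset Y$ is an annulus, and $f^{-1}(A)$ is a disjoint union of annular components indexed by the cosets $G/G_v$, each a cyclic $\abs{G_v}$-fold cover of $A$ whose core is a component of $f^{-1}(\bar\gamma)$. One such core is $\gamma$; the others are its $G$-translates. Reading off homology in $\Z[G]$ should yield an explicit sum formula for $v$, and the fact that $\gamma$ is a single connected curve (rather than a union of parallel copies) should force the ideal $I_v$ to be generated by the single element $\zeta_v$ read off from this annular picture.

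The coherence condition is the most delicate, and is where the stabilization plays its essential role. In the original cover $f \colon X \to Y$, the stabilizer $G_v$ need not be generated by the specified pushforward elements; the remedy is to enlarge $Y$ to $Y^+$ by attaching $1$-handles whose loops $\phi$-map to a chosen generating set of $G_v$, and to extend $\phi$ and $f$ accordingly to produce a stabilization $X \hookrightarrow X^+$ in the sense of \Cref{definition:stabilization}. I expect this construction to be the main obstacle: one must verify that the resulting enlargement genuinely fits the stabilization framework (so that $v$, $G_v$, $I_v$, $\zeta_v$ are compatibly identified in $X^+$) and does not disturb the other three conditions. Isotropy and primitivity should be unaffected since the new handles are attached away from $\gamma$ and $\bar\gamma$, but parity requires checking that the extension of $q$ to $X^+$ still evaluates to zero on (the image of) $v$.
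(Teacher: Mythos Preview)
Your plan has the roles of primitivity and coherence reversed. In the paper, isotropy, parity, and \emph{coherence} all hold for $v$ in the original cover without any stabilization; it is \emph{primitivity} that can genuinely fail on $X$ and that forces the passage to $X^+$.

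For primitivity, the annular picture you describe yields at best the containment $I_v \subseteq \Z[G]\zeta_v$ (this follows directly from $G_v$-invariance of $v$). The hard direction is producing a class $u$ with $\pair{u,v} = \zeta_v$, i.e.\ a closed curve on $X$ meeting $\gamma$ algebraically once and missing every other $G$-translate of $\gamma$. Such a curve need not exist in $X$. The paper's fix is concrete: since $\bar\gamma$ is nonseparating, there is an arc $\alpha \subset Y$ from $\Delta_0$ crossing $\bar\gamma$ once; one then stabilizes by a basic $\phi(\alpha)^{-1}$-handle so that $\alpha$ closes up to a simple loop $\delta$ with $\phi(\delta_\bullet)=1$, whence $\pair{[\tilde\delta],v}=\zeta_v$. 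Your ``connected versus parallel copies'' heuristic does not supply this dual class.

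For coherence, no stabilization is needed and your proposed handle-attachment would not help in any case. Coherence follows directly from covering-space theory applied to the single curve $\tilde\gamma$: the restriction $f_v|_{\tilde\gamma}\colon \tilde\gamma \to f_v(\tilde\gamma)$ is a cyclic cover of degree $\abs{G_v}$, so $f_{v,*}(v) = \abs{G_v}\,[f_v(\tilde\gamma)]$, and the monodromy element $\phi_{v,*}([f_v(\tilde\gamma)])$ has order equal to that degree, hence generates $G_v$. Even if this failed, attaching handles whose loops $\phi$-map into $G_v$ would not change $\phi_{v,*}$ on the class $f_{v,*}(v)/\abs{G_v}$, since that class is supported on the original $X_v$.

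A smaller point: your isotropy sketch invokes only ``a simple closed curve has zero self-intersection,'' which gives $(v,v)=0$ but not $\pair{v,v}=0$. The relative pairing requires $(v,gv)=0$ for every $g\in G$, and this holds because the $G$-translates of $\gamma$ are the pairwise-disjoint components of $f^{-1}(\bar\gamma)$ --- exactly the local formula for $\pair{\cdot,\cdot}$ summing over the (empty) set of self-crossings of $\bar\gamma$.
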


\para{Necessary conditions: overview} Here we give brief descriptions of the four conditions of \Cref{maintheorem:necessary}; see \Cref{section:pairing,section:q,section:necessary} for full details. The isotropy and primitivity conditions both are formulated in terms of a {\em relative intersection pairing} $\pair{\cdot,\cdot}$: this is a $\Z[G]$-valued skew-Hermitian form defined and studied in \Cref{section:pairing}. The relative intersection pairing records intersections not just between fixed homology classes, but their $G$-orbits. The isotropy condition then is simply a reflection of the fact that components of the $G$-orbit of a relatively geometric class are disjoint. The primitivity condition concerns the {\em pairing ideal} $I_v$ (defined as the left ideal in $\Z[G]$ of elements of the form $\pair{u,v}$ for $u \in H_1(X;\Z)$ arbitrary), and asserts that this must be a certain left-principal ideal; in the case where the stabilizer $G_v$ of $v$ is trivial, this ideal is $\Z[G]$ itself (see \Cref{subsection:primitivity} for a full discussion).

The parity condition concerns a certain mod-2 quadratic refinement $q$ of the intersection pairing which is studied in \Cref{section:q}. This is also a condition on the self-intersection of the orbit of $v$ which repairs the fact that the relative intersection pairing is blind to the $2$-torsion elements in $G$. Finally the coherence condition is an extra condition that is only relevant when the class $v$ has nontrivial stabilizer $G_v$. It asserts that the stabilizer subgroup be ``recoverable'' from the behavior of the element itself in relation to the classifying map of the cover - see \Cref{subsection:coherence} for details.

\begin{remark}[Where is stabilization necessary?]
Note that both the necessary and the sufficient conditions require the surface to be stabilized. In the necessary case, the conditions of isotropy, parity, and coherence hold without further stabilization, but primitivity is not guaranteed (and indeed can fail) without stabilization - see \Cref{lemma:primitive}. The proof of sufficiency makes systematic use of stabilization. At root, stabilization is used to repair points of self-crossing on $Y$ by rerouting the crossing through a new handle - see \Cref{section:resolution} - and our arguments provide no bound on the size of a stabilization required to realize even a single class. It is possible to refine the techniques described in the paper and obtain a result asserting that there is a fixed stabilization $X^+$ of $X$ (adding genus slightly more than the minimum number of generators of $G$) on which all classes in $H_1(X;\Z)$ satisfying \Cref{maintheorem:necessary} are relatively geometric. However, some of the required constructions are rather elaborate and so we postpone this line of inquiry to future work.
\end{remark}

\para{Relationship with surgery theory} As pointed out to us by Stephan Stolz, the central arguments in this paper have a close spiritual analog in the foundations of surgery theory. In that setting, one is interested in knowing when an immersion of a half-dimensional sphere $i: S^k \to M^{2k}$ can be promoted to an embedded sphere in the same homotopy class. The obstruction for doing so is encoded in an intersection form valued in $\Z[\pi_1(M)]$: it is necessary and sufficient that the corresponding homology class be (in the language of \Cref{maintheorem:necessary}) isotropic and even. Thus the results of this paper can be viewed as a kind of ``equivariant surgery theory'' in dimension $2$, replacing homotopy with homology.

There are some places where these storylines diverge. Firstly, we find that isotropy and parity are {\em not} sufficient to characterize the classes we are interested in. More subtle is the fact that we are forced to tailor our quadratic refinement $q$ in the parity condition to be sensitive to the stabilizer subgroup of the class $v \in H_1(X;\Z)$ under study; without this modification, the ``na\"ive'' construction of $q$ imported directly from surgery theory would not be sensitive enough to detect all points of self-intersection as required. This is the reason we do not package the relative intersection form and its quadratic refinement together into a single invariant as is typical - we would be forced to make our construction conscious of the stabilizer subgroups of the elements, which we feel would be an encumbrance that would obscure the overall picture.

\para{Relationship with Hermitian $K$-theory} The study of modules equipped with (skew)-Hermitian forms (such as the $\Z[G]$-module $H_1(X;\Z)$ equipped with $\pair{\cdot, \cdot}$) belongs to the domain of algebraic $K$-theory. The subject provides tools to study generating sets for the unitary automorphisms of the module, orbits of vectors, and other aspects of these groups which are quite pertinent to the problem at hand. While the ideas of this field served as a deep source of inspiration for this project, we should emphasize the fact that the arguments of this paper are purely topological and make no actual use of the technology of Hermitian $K$-theory. Indeed, we are able to obtain our results for arbitrary finitely-generated groups, whereas (to the author's knowledge) understanding of Hermitian $K$-theory for the group rings of finitely-generated groups is quite incomplete. We think it would be very worthwhile to investigate the extent to which the topological ideas of the paper could provide a new set of tools to study automorphism groups of skew-Hermitian modules by viewing them as quotients of the associated ``liftable'' subgroup of the mapping class group.

\para{Organization} \Cref{section:coverings} recalls the necessary background from the theory of covering spaces, and fixes our definitions and conventions regarding stabilizations. \Cref{section:pairing} establishes the basic theory of the relative intersection pairing appearing in the isotropy condition of \Cref{maintheorem:necessary}, and \Cref{section:q} does likewise for the quadratic refinement $q$ of the parity condition. \Cref{section:necessary} discusses the remaining necessary conditions of primitivity and coherence, and proves \Cref{maintheorem:necessary}. The final three sections are devoted to the proof of \Cref{maintheorem:sufficient}: \Cref{section:PU} discusses a special class of elements (``purely-unital vectors'') for which the question of relative geometricity can be resolved by hand, \Cref{section:resolution} introduces the {\em resolution process} underlying the main argument, and finally \Cref{section:proof} establishes \Cref{maintheorem:sufficient}.

\para{Acknowledgements} I would like to extend a hearty thanks to Corey Bregman for many profitable discussions on matters surrounding this work. I would also like to thank Stephan Stolz for highlighting the connections with surgery theory and for drawing my attention to the reference \cite{luck}, and to Sebastian Hensel for alerting me to the work of Flamm \cite{flamm}.

\section{Coverings of surfaces and their stabilizations}\label{section:coverings}

\subsection{Covering spaces and elevations of curves} Here we recollect some basic notions from covering space theory. The discussion here is largely routine and is included to fix notation and establish conventions, although \Cref{lemma:cyclicstab} is (slightly) less elementary, and will play an important role throughout the paper.

\para{Standing assumptions} Throughout the paper, $G$ denotes a finitely-generated group, and $f: X \to Y$ denotes a regular $G$-covering of a connected oriented surface $Y$ of finite type and nonempty boundary. We assume that $X$ is connected, so that $f$ is classified by a surjective homomorphism $\phi: \pi_1(Y) \to G$. We further assume that $Y$ has a distinguished boundary component $\Delta_0 \subset \partial Y$, subject to the condition that $\phi(\Delta_0) = 1$, when $\Delta_0$ is viewed as an element of $\pi_1(Y)$.

\para{Elevations} Let $\gamma \subset Y$ be a curve. An {\em elevation} of $\gamma$ is a choice of component $\tilde \gamma \subset f^{-1}(\gamma)$. We will use the notation
\begin{equation}\label{bullet}
\gamma_\bullet \in \pi_1(Y,*)
\end{equation}
to indicate the element of $\pi_1(Y,*)$ obtained by choosing an arbitrary basepoint $* \in \gamma$. Covering space theory asserts that the {\em conjugacy class} of $\gamma_\bullet$ is well-defined independently of choice of basepoint.

The deck group $G$ acts on the set of elevations of $\gamma$ from the {\em left} with cyclic stabilizer subgroup $\pair{\phi(\gamma_\bullet)}$, well-defined as a {\em subgroup} relative to a fixed basepoint, and as a {\em conjugacy class of subgroup} when no basepoint is specified. 

\para{Basepoint conventions} We will occasionally need to be careful about basepoints. We assume throughout that $Y$ is equipped with a basepoint $*$ contained on the distinguished boundary component $\Delta_0$. By the assumption that $\phi(\Delta_0) = 1$, the elevations of $\Delta_0$ are in bijective correspondence with $G$. We choose a distinguished elevation $\tilde \Delta_0$ of $\Delta_0$, and base $X$ at the lift $\tilde{*}$ of $*$ contained in $\tilde \Delta_0$.

\para{Geometric stabilizers are finite cyclic} The following lemma will play an important background role in what is to follow; it asserts that even when $G$ is infinite, the stabilizer subgroups of classes $v \in H_1(X;\Z)$ with geometric representatives are finite cyclic. A stronger version valid for finite covers appears as \cite[Proposition 2.1]{FH}. 

\begin{lemma}
\label{lemma:cyclicstab}
Let $G$ be a finitely-generated group and let $f: X \to Y$ be a regular $G$-covering. Suppose that $X$ is connected and has non-empty boundary. Let $v \in H_1(X;\Z)$ be represented by an elevation $\tilde \gamma$ of a nonseparating simple closed curve $\gamma \subset Y$. Then the stabilizer subgroup $G_v \leqslant G$ of $v$ is a finite cyclic group.
\end{lemma}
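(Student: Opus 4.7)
The approach is to prove the stronger statement $G_v = \Stab(\tilde\gamma) = \langle \phi(\gamma_\bullet) \rangle$; this group is finite cyclic because compactness of $\tilde\gamma$ (which is required for it to represent a homology class) forces $\phi(\gamma_\bullet)$ to have finite order, equal to the degree of $\tilde\gamma \to \gamma$. The containment $\Stab(\tilde\gamma) \subseteq G_v$ is immediate: if $g \cdot \tilde\gamma = \tilde\gamma$ as a set, then $g$ certainly fixes $[\tilde\gamma] = v$.

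For the reverse containment, I would produce a dual arc on $Y$. Since $\gamma$ is nonseparating in $Y$ and $Y$ has nonempty boundary, a standard argument (Poincar\'e--Lefschetz duality on $(Y, \partial Y)$, plus surgery to make the resulting arc embedded and transverse) yields a properly embedded arc $\delta \subset Y$ with endpoints on $\partial Y$ meeting $\gamma$ transversely in a single point. Among the lifts of $\delta$ to $X$, at least one crosses $\tilde\gamma$, since $\tilde\gamma$ contains $|\phi(\gamma_\bullet)| \geq 1$ preimages of the crossing point. Fix such a lift $\tilde\delta$, and let $p \in \tilde\gamma$ denote its unique intersection with $f^{-1}(\gamma)$, so that $i(\tilde\gamma, \tilde\delta) = \pm 1$ in the $G$-equivariant relative intersection pairing $H_1(X;\Z) \otimes H_1(X, \partial X;\Z) \to \Z$.

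The heart of the argument is then a two-sided evaluation of $i(\tilde\gamma, g\tilde\delta)$ for an arbitrary $g \in G_v$. On the one hand, $G$-equivariance of the pairing together with $[g^{-1}\tilde\gamma] = [\tilde\gamma]$ gives
\[
i(\tilde\gamma, g\tilde\delta) \;=\; i(g^{-1}\tilde\gamma, \tilde\delta) \;=\; i(\tilde\gamma, \tilde\delta) \;=\; \pm 1.
\]
On the other hand, $g\tilde\delta$ meets $f^{-1}(\gamma)$ only at the single point $gp$, which lies on the elevation $g\tilde\gamma$; since distinct elevations of $\gamma$ are pairwise disjoint, $gp \in \tilde\gamma$ if and only if $g\tilde\gamma = \tilde\gamma$, i.e., $g \in \Stab(\tilde\gamma)$. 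Consequently, if $g \notin \Stab(\tilde\gamma)$, then the geometric intersection $\tilde\gamma \cap g\tilde\delta$ is empty, forcing $i(\tilde\gamma, g\tilde\delta) = 0$ and contradicting $\pm 1 = 0$. Hence $g \in \Stab(\tilde\gamma)$, completing the reverse inclusion.

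The main technical hurdle is the construction of the embedded dual arc $\delta$ meeting $\gamma$ transversely in a single point, which genuinely uses both hypotheses of the lemma: the nonseparating condition on $\gamma$ ensures that $Y$ cut along $\gamma$ is connected (so the two sides can be joined by a path in the cut surface), and the nonempty boundary of $Y$ provides the arc with a place to begin and end. Once $\delta$ is in hand, the remaining intersection-theoretic bookkeeping on the covering space is elementary.
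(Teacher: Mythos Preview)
Your proof is correct and takes a genuinely different route from the paper's. Both arguments begin identically---observing that $\langle\phi(\gamma_\bullet)\rangle$ is finite cyclic and sits inside $G_v$---but diverge on the reverse containment. The paper argues by contradiction via a \emph{separation} argument: if $g\tilde\gamma \ne \tilde\gamma$ were homologous to $\tilde\gamma$, the two disjoint curves would cobound a compact subsurface $S \subset X$ containing no boundary component of $X$, and then a lift of an arc in $Y$ from $\gamma$ to $\partial Y$ (interior disjoint from $\gamma$) is trapped in $S$ yet must reach $\partial X$, a contradiction. You instead build a properly embedded arc $\delta \subset Y$ dual to $\gamma$, lift it, and read off the contradiction from the algebraic intersection pairing $H_1(X;\Z)\otimes H_1(X,\partial X;\Z)\to\Z$.

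Your approach is arguably cleaner: it replaces the somewhat delicate assertion that two disjoint homologous curves on a possibly non-compact surface cobound a \emph{compact} region (which the paper uses but does not justify in detail) with a direct linear-algebraic computation. It also makes transparent exactly where each hypothesis enters, as you note. The paper's approach, on the other hand, needs only an arc from $\gamma$ to $\partial Y$ with interior disjoint from $\gamma$---a slightly weaker topological input than your dual arc crossing $\gamma$ once with both endpoints on $\partial Y$---though of course both constructions are elementary once $\gamma$ is nonseparating and $\partial Y\ne\varnothing$.
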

\begin{proof}
We first observe that necessarily $\abs{\phi(\gamma_\bullet)}$ is finite, otherwise the elevations of $\gamma$ do not have compact support and so do not represent elements $v \in H_1(X;\Z)$. Certainly $g \in \pair{\phi(\gamma_\bullet)}$ fixes the elevation $\tilde{\gamma}$ as an oriented simple closed curve, and so $G_v$ contains the finite cyclic group $\pair{\phi(\gamma_\bullet)}$. Any $g \not \in \pair{\phi(\gamma_\bullet)}$ takes $\tilde \gamma$ to some disjoint elevation $\tilde \gamma '$ of $\gamma$. It therefore suffices to show that distinct elevations lie in distinct homology classes.

Suppose to the contrary: then $\tilde \gamma$ and $\tilde \gamma '$ are disjoint homologous curves in $X$. Therefore $X \setminus \{\tilde \gamma, \tilde \gamma '\}$ is disconnected, and moreover at least one component is a {\em compact} subsurface $S$ with boundary $\tilde \gamma \cup \tilde \gamma '$. Necessarily then $S$ does not contain any elevation of any component of $\partial X$ and so $f(S)$ does not contain any component of $\partial Y$. But this is absurd: let $\alpha \subset Y$ be an arc connecting $\gamma$ to $\partial Y$ whose interior is disjoint from $\gamma$. There is a lift of $\alpha$ to $X$ that is contained in $S$ for sufficiently small $t$ (relative to an arbitrary parameterization beginning at $\tilde \gamma$), and since $\alpha$ does not cross $\gamma$, this lift of $\alpha$ never leaves $S$, showing that $S$ contains a component of $\partial X$, contrary to assumption.
\end{proof}

\para{Homology classes and their representatives: standing conventions} In the sequel we will frequently pass between a homology class $v \in H_1(X;\Z)$ and a representative cycle. Here we fix conventions that are to be understood throughout.

\begin{convention}
\label{standingconventions}
Let $v \in H_1(X;\Z)$ be arbitrary. By a {\em representative} for $v$, we mean an oriented weighted multicurve $\gamma \subset X$ with $[\gamma] = v$. By standard transversality considerations, we always assume that $\gamma$ is in {\em general position} with respect to the projection map $f: X \to Y$ - we assume that $f(\gamma)$ is immersed with a finite number of transverse self-intersection points and no triple intersections. If we consider two classes $v,w$ simultaneously, we moreover assume that $f(v)$ and $f(w)$ are transverse.   
\end{convention}

\subsection{Stabilization}
In this paper we do not work with the absolutely most general notion of stabilization one could formulate. To delineate our conventions and fix notation, we give a precise definition of the stabilization operation we consider. 
\begin{definition}[(Simple) stabilization]
\label{definition:stabilization}
Let $f: X \to Y$ be a regular $G$-cover satisfying the standing assumptions. A {\em stabilization} of $f$ is an embedding of regular $G$-covers
\[
\xymatrix{
X \ar[r]^{\tilde i} \ar[d]_f & X^+ \ar[d]^{f^+}\\
Y \ar[r]_i        & Y^+
}
\]
satisfying the following conditions:
\begin{enumerate}
    \item The total space $X^+$ is connected, so that $f^+: X^+ \to Y^+$ is classified by an extension $\phi^+: \pi_1(Y^+) \to G$ of $\phi$ (i.e. the restriction of $\phi^+$ to $\pi_1(Y)$ is given by $\phi$),
    \item The complement $Y^+ \setminus Y$ is a connected surface with two boundary components, one of which is $\Delta_0$ and the other of which (denoted $\Delta_1$) is contained in $\partial Y^+$, 
    \item $\phi^+(\Delta_1) = 1$ and we take $\Delta_1$ as the distinguished boundary component of $Y^+$.
\end{enumerate}
We write 
\[
H_1(X^+;\Z)^{stab} \leqslant H_1(X^+;\Z)
\]
to denote the submodule of $H_1(X^+;\Z)$ induced by the inclusion $X^+ \setminus X \into X^+$.

A stabilization $f^+$ of $f$ is {\em simple} if the restriction of $\phi^+$ to $\pi_1(Y^+ \setminus Y)$ is the trivial homomorphism. 
\end{definition}
Note that while our conventions provide for a canonical choice of new distinguished boundary component $\Delta_1$ of $Y^+$, this does not in general lift to a canonical choice of elevation $\tilde \Delta_1$. Such a choice can be made as follows: let $\alpha \subset Y^+ \setminus Y$ be a properly-embedded arc connecting $\Delta_0$ to $\Delta_1$; then the lift $\tilde \alpha$ based at $\tilde \Delta_0$ ends at an elevation of $\Delta_1$ which we take to be the distinguished lift $\tilde \Delta_1$. If $f^+$ is a {\em simple} stabilization, then this choice is independent of such $\alpha$ and hence is canonical, but in general the set of possible choices is in bijection with elements of the subgroup $\phi(\pi_1(Y^+ \setminus Y)) \leqslant G$. 

\subsection{Basic $g$-handles}
In practice, the stabilizations we consider will be of an especially simple form. 
\begin{figure}[ht]
\labellist
\small
\pinlabel $\Delta_0$ [br] at 143 25.51
\pinlabel $\Delta_1$ [tl] at 198 31.18
\pinlabel $\tilde{\Delta_1}$ [tl] at 198 217
\pinlabel $g\tilde{\Delta_1}$ [tl] at 198 150
\pinlabel $*$ [tl] at 184.07 0.00
\pinlabel $\tilde{*}$ [l] at 189.90 187.07
\pinlabel \textcolor{mygreen}{$\alpha$} [t] at 155.89 0.00
\pinlabel \textcolor{mygreen}{$\tilde{\alpha}$} [b] at 147.39 189.90
\pinlabel \blue{$\xi$} [bl] at 187.07 45.18
\pinlabel \red{$\eta$} [tr] at 158.89 19.84
\pinlabel $Y^+$ [br] at 14.17 48.18
\pinlabel $X^+$ [br] at 14.17 229.58
\endlabellist
\includegraphics[scale=1]{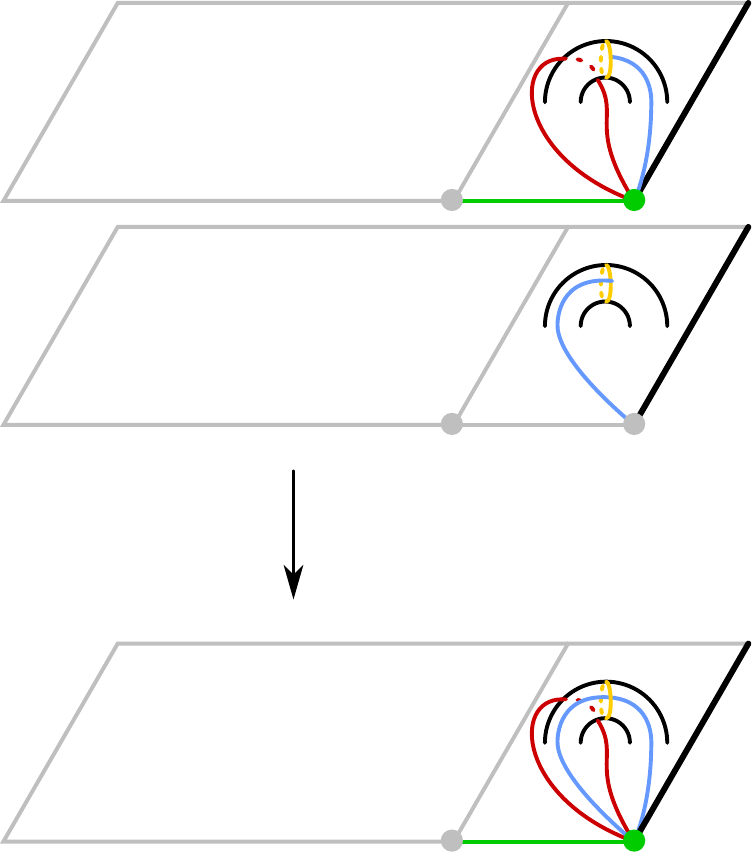}
\caption{A basic $g$-handle lying below its preimage in $X^+$. The curve $\eta$ lifts to $X^+$, while the lift of $\xi$ based at $\tilde{*}$ ends at $g \tilde{*}$. More generally, crossing over the branch cut from the right moves from sheet $h$ to sheet $hg$.}
\label{figure:basicg}
\end{figure}
\begin{definition}[Basic $g$-handle]
\label{def:basicg}
Let $g \in G$ be a chosen element. A {\em basic $g$-handle} is a torus with two boundary components $H$, along with a homomorphism $\phi: \pi_1(H) \to G$ given by
\[
\phi(\xi) = g, \quad \phi(\eta) = 1, \quad \phi(\Delta_0) = 1, \quad \phi(\Delta_1) = 1, 
\]
(where the elements $\xi, \eta, \Delta_0, \Delta_1 \in \pi_1(H)$ are shown in \Cref{figure:basicg}), as well as a choice of properly-embedded arc $\alpha$ connecting $\Delta_0$ to $\Delta_1$ (also shown in \Cref{figure:basicg}).

When $f^+: X^+ \to Y^+$ is a stabilization of $f$ such that there is a homeomorphism $Y^+ \setminus Y \cong H$ with a basic $g$-handle $H$ that is compatible with the homomorphisms to $G$, we say that $f^+$ is obtained from $f$ by {\em attaching a basic $g$-handle}.
\end{definition}
Note that the possibility that $g = 1 \in G$ is allowed; we hope that the term ``basic $1$-handle'' should not create too much confusion with the more conventional Morse-theoretic meaning, since stabilizing by attaching a basic $g$-handle to $Y$ for any $g \in G$ is, on a topological level, merely attaching a $1$-handle to $Y$ in the Morse-theoretic sense. Note also that a simple stabilization is merely a sequence of stabilizations by basic $1$-handles.

\section{The relative intersection pairing}\label{section:pairing}
In this section, we discuss a crucial algebraic invariant of $H_1(X;\Z)$ which we call the {\em relative intersection pairing}. This has appeared in the literature in various guises and by various names, in surface topology, $3$-manifold topology, and in surgery theory. See e.g. \cite{putman,luck} for further discussion and references. 

\Cref{subsection:algebraic} establishes the basic algebraic properties of the relative intersection pairing. In \Cref{subsection:localformula}, we establish a {\em local formula} for the relative intersection pairing in terms of intersection points of the projections of cycles on $Y$. Finally in \Cref{subsection:simplestab}, we study how a simple stabilization affects the homology of a cover and the associated relative intersection form.

\subsection{Basic algebraic properties}\label{subsection:algebraic}
We denote the ordinary algebraic intersection pairing on $H_1(X;\Z)$ by $( \cdot, \cdot )$. This is a bilinear alternating form valued in $\Z$. Using this, we define the relative intersection pairing valued in $\Z[G]$.
\begin{definition}\label{def:relint}
The {\em relative intersection pairing} is the form
\[
\pair{\cdot, \cdot}: H_1(X; \Z) \otimes H_1(X; \Z) \to \Z[G]
\]
defined by the formula
\begin{equation}\label{equation:relint}
\pair{v,w} = \sum_{g \in G} (v,gw) g.
\end{equation}
\end{definition}
\begin{remark}
When $G$ is infinite, the sum defining $\pair{\cdot, \cdot}$ is over an infinite set, posing the question of well-definedness. However, the classes $v, w$ are necessarily represented by cycles with compact support, while the covering group $G$ acts freely and properly-discontinuously, so that $(v,gw)$ is nonzero for only a finite number of elements $g \in G$. 
\end{remark}

\begin{lemma}\label{lemma:skewherm}
The relative intersection form is {\em skew-Hermitian}, i.e. $\pair{\cdot, \cdot}$ is $\Z[G]$-linear in the first component, and satisfies $\pair{w,v} = -\overline{\pair{v,w}}$, where $\overline{\cdot}: \Z[G] \to \Z[G]$ is the involution induced by the inversion map on $G$.
\end{lemma}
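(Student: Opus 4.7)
The plan is to unwind the definition \eqref{equation:relint} of $\pair{\cdot,\cdot}$ and reduce both claims to two basic properties of the ordinary intersection pairing on $H_1(X;\Z)$: it is alternating, and it is $G$-invariant (since $G$ acts on $X$ by orientation-preserving homeomorphisms), i.e.\ $(hv,hw)=(v,w)$ for all $h\in G$.

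For $\Z[G]$-linearity in the first component, additivity is immediate from bilinearity of $(\cdot,\cdot)$, so it suffices to check that $\pair{hv,w}=h\pair{v,w}$ for each $h\in G$. Starting from the definition, I would compute
\[
\pair{hv,w}=\sum_{g\in G}(hv,gw)\,g=\sum_{g\in G}(v,h^{-1}gw)\,g
\]
using $G$-invariance, and then reindex by $g'=h^{-1}g$ to obtain $\sum_{g'}(v,g'w)\,hg'=h\pair{v,w}$.

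For the skew-Hermitian identity, I would combine the alternating property with $G$-invariance to write $(w,gv)=-(gv,w)=-(v,g^{-1}w)$. Substituting into the definition gives
\[
\pair{w,v}=\sum_{g\in G}(w,gv)\,g=-\sum_{g\in G}(v,g^{-1}w)\,g,
\]
and the substitution $g'=g^{-1}$ converts this to $-\sum_{g'}(v,g'w)\,(g')^{-1}$, which is exactly $-\overline{\pair{v,w}}$ since $\overline{\cdot}$ is the $\Z$-linear extension of $g\mapsto g^{-1}$.

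There is no real obstacle here; the only subtlety is to keep the reindexing of the sum straight and to remember that the well-definedness of the infinite sum (when $G$ is infinite) is handled by the remark following \Cref{def:relint}, so that term-by-term reindexing is legitimate.
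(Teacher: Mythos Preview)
Your proof is correct and is exactly the routine verification the paper has in mind; the paper itself does not give an argument, declaring it ``a straightforward exercise\ldots left to the reader.'' Your use of $G$-invariance of $(\cdot,\cdot)$ together with its alternating property, followed by the obvious reindexing of the sum, is the standard (and only reasonable) way to carry this out.
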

\begin{proof}
This is a straightforward exercise and is left to the reader. 
\end{proof}

\subsection{The local formula}\label{subsection:localformula} 
The goal of this subsection is to establish \Cref{lemma:localformula}, which gives a formula for $\pair{v,w}$ ``localized'' over the points of intersection of suitable representatives for $v,w$ projected onto $Y$. This is formulated in terms of a {\em local sheet index} presented in \Cref{definition:localindex}.

Suppose $v,w \in H_1(X;\Z)$ are represented by oriented cycles $\gamma, \delta$ on $X$. We do not assume that $v$ and $w$ are relatively geometric, but we do assume that $\gamma$ is stabilized setwise by some {\em finite cyclic} subgroup $G_\gamma$ and likewise that $\delta$ is stabilized by $G_\delta$ (of course if $v,w$ are relatively geometric and nonseparating, then this is forced by \Cref{lemma:cyclicstab}). Observe that $G_\gamma$ is a subgroup of the stabilizer $G_v$ of $v = [\gamma]$ and likewise that $G_\delta \leqslant G_w$. For now we do not assume that this is an equality (but see \Cref{lemma:eqvtrep}). 

By \Cref{standingconventions}, $f(\gamma), f(\delta)$ have a finite number of crossings. Each crossing contributes a local factor to the pairing $\pair{v,w}$; \Cref{lemma:localformula} below records this formula. To state it, we define the {\em local sheet index} as follows. Let $p_i \in Y$ be a point of crossing between $f(\gamma), f(\delta)$. In $X$, the local branch of $f(\gamma)$ is covered by an orbit $G_\gamma \gamma$ of local branches of $\gamma$, and likewise $f(\delta)$ is covered by an orbit $G_\delta \delta$ of local branches of $\delta$. 

The sheets above $p_i$ can be non-canonically identified with $G$ via the following procedure: choose a basepoint $* \in Y$, and choose a distinguished lift of $*$ in $X$; in this way the fiber above $*$ is identified with $G$. Next choose a path $\alpha \subset Y$ connecting $*$ to $p_i$, and use the lifts of $\alpha$ to identify the fiber above $p_i$ with $G$. If some other path $\beta \subset Y$ is used instead, covering space theory shows that the two identifications differ by {\em right-}multiplication by some element $k \in G$. Under any such identification, the local branches of $\gamma$ lie in the sheets corresponding to some coset $G_\gamma g_1$, and likewise the local branches of $\delta$ lie in the sheets corresponding to a coset $G_\delta g_2$. 

\begin{definition}[Local sheet index]\label{definition:localindex}
With notation as in the above paragraph, the {\em local sheet index at $p_i$} is the double coset 
\[
i(\gamma, \delta, p_i) := G_\gamma g_1 g_2^{-1} G_\delta.
\]
\end{definition}

\begin{lemma}
\label{lemma:sheetindexWD}
The local sheet index at $p_i$ is well-defined independently of the identification of sheets above $p_i$ with $G$.
\end{lemma}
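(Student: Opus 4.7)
The plan is to trace precisely how the data $(g_1, g_2)$ entering $i(\gamma,\delta,p_i)$ transforms under a change of the auxiliary path $\alpha \subset Y$ from $*$ to $p_i$, and to verify that the resulting double coset $G_\gamma g_1 g_2^{-1} G_\delta$ is insensitive to this change. Since the cosets $G_\gamma g_1$ and $G_\delta g_2$ are intrinsically determined by the partition of local branches of $\gamma$ (resp.\ $\delta$) at $p_i$ into $G_\gamma$- (resp.\ $G_\delta$-) orbits, the only freedom to worry about lies in the identification of the fiber $f^{-1}(p_i)$ with $G$, i.e.\ in the choice of path.

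First I would set notation. Given $\alpha$, its lift starting at the distinguished point $\tilde * \in X$ terminates at some $\tilde p_i^\alpha \in f^{-1}(p_i)$, and the identification $G \to f^{-1}(p_i)$ sends $g \mapsto g \cdot \tilde p_i^\alpha$ via the left deck action. If $\beta$ is a second path from $*$ to $p_i$, then $\alpha \bar\beta$ is a loop at $*$, and its image $h \in G$ under $\phi$ satisfies $\tilde p_i^\beta = h^{-1} \cdot \tilde p_i^\alpha$ (the exact sign of the exponent depends on orientation conventions; either way, the two lifts differ by a single fixed element of $G$). Setting $k$ equal to $h$ or $h^{-1}$ accordingly, one checks immediately that under the $\beta$-identification, the sheet previously labeled $g$ by $\alpha$ is now labeled $gk$.

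The key observation is that this right multiplication by $k$ is applied uniformly across the entire fiber above $p_i$, with no dependence on whether one is looking at $\gamma$ or $\delta$. Consequently, the new coset of $\gamma$-branch labels is $G_\gamma g_1 k$, the new coset of $\delta$-branch labels is $G_\delta g_2 k$, and
\[
G_\gamma (g_1 k)(g_2 k)^{-1} G_\delta = G_\gamma g_1 k k^{-1} g_2^{-1} G_\delta = G_\gamma g_1 g_2^{-1} G_\delta,
\]
so the double coset agrees with the one computed from $\alpha$.

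There is no substantial obstacle here: the lemma is essentially a bookkeeping exercise. The only real care needed is to keep the left deck action separate from the right multiplication by $k$, and to confirm that the same element $k$ appears on both $g_1$ and $g_2$ so that it cancels in the middle of the expression $g_1 g_2^{-1}$. This uniformity is exactly what is ensured by the fact that changing paths corresponds to a single global right multiplication on the identification of $f^{-1}(p_i)$ with $G$, rather than two independent re-labelings.
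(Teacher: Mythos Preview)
Your proof is correct and follows essentially the same approach as the paper: both arguments observe that two identifications of $f^{-1}(p_i)$ with $G$ differ by a uniform right multiplication by some $k \in G$, so the cosets become $G_\gamma g_1 k$ and $G_\delta g_2 k$, and the $k$'s cancel in $g_1 k (g_2 k)^{-1} = g_1 g_2^{-1}$. You supply a bit more detail on how $k$ arises from the loop $\alpha\bar\beta$, but the substance is identical.
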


\begin{proof}
As mentioned above, if $m_j: f^{-1}(p_i) \to G$ for $j = 1,2$ are two such markings, then there exists $k \in G$ such that 
\[
m_2(q) = m_1(q)k
\]
for all $q \in f^{-1}(p_i)$. If the distinguished cosets under the first marking are $G_\gamma g_1, G_\delta g_2$, then in the second marking they are $G_\gamma g_1 k, G_\delta g_2 k$. The double coset is $G_\gamma g_1 g_2^{-1} G_\delta$ in either case.
\end{proof}

In order to understand the contribution to $\pair{v,w}$ associated to a crossing point with given local sheet index, we introduce a key piece of notation. For an element $v \in H_1(X;\Z)$ {\em with finite cyclic stabilizer $G_v \leqslant G$}, define the element $\zeta_v \in \Z[G]$ via
\begin{equation}\label{zetadef}
\zeta_v := \sum_{g \in G_v} g.
\end{equation}
With $\zeta_v$ defined, we come to the formulation of the {\em local crossing factor}.
\begin{definition}[Local crossing factor]
\label{definition:localfactor}
With notation as in \Cref{definition:localindex}, the {\em local crossing factor at $p_i$} is the element $c(\gamma, \delta, p_i) \in \Z[G]$ defined by
\[
c(\gamma, \delta, p_i) := \zeta_\gamma (g_1 g_2^{-1}) \zeta_\delta
\]
(note that this expression indeed depends only on the associated double coset). 
\end{definition}

\begin{lemma}\label{lemma:localformula}
Let $v,w \in H_1(X;\Z)$ be represented by oriented cycles $\gamma, \delta$ on $X$. Suppose that $f(\gamma), f(\delta)$ are immersed in $Y$ and intersect in general position at points $p_1, \dots, p_k \in Y$. For a point of intersection $p_i$, let $\epsilon_i \in \{\pm 1\}$ denote the local intersection number, and let $c(\gamma, \delta, p_i) = \zeta_\gamma (g_{1,i} g_{2,i}^{-1}) \zeta_\delta \in \Z[G]$ denote the local crossing factor. Then 
\[
\pair{v,w} = \pair{[\gamma], [\delta]} = \sum_{i =1}^k \epsilon_i c(\gamma, \delta, p_i) = \zeta_\gamma \left(\sum_{i = 1}^k \epsilon_i (g_{1,i} g_{2,i}^{-1}) \right) \zeta_\delta.
\]
\end{lemma}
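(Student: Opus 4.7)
The plan is to unravel the definition of $\pair{v,w}$ and then localize the intersection count over the transverse crossings $p_i \in Y$. The starting point is the formula $\pair{v,w} = \sum_{g \in G}(v, gw)\, g$, where $(v, gw) = ([\gamma] \cdot g[\delta])$ is a signed count of transverse intersection points between $\gamma$ and $g\delta$ in $X$.

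The first key observation is that any intersection of $\gamma$ with $g\delta$ in $X$ must project to some $p_i \in Y$, since $f(\gamma)$ and $f(g\delta) = f(\delta)$ meet only there. To count these intersections above each $p_i$, I would use the identification of the fiber $f^{-1}(p_i)$ with $G$ set up before \Cref{definition:localindex}: the lifts of $p_i$ lying on $\gamma$ are labeled by the coset $G_\gamma g_{1,i}$, and the lifts lying on $\delta$ are labeled by $G_\delta g_{2,i}$. Because the left deck-transformation action by $g$ sends the lift labeled $h$ to the lift labeled $gh$, the lifts on $g\delta$ are labeled by $g G_\delta g_{2,i}$. Consequently
\[
(v, gw) = \sum_{i=1}^k \epsilon_i \,\bigl|\, G_\gamma g_{1,i} \cap g G_\delta g_{2,i}\,\bigr|.
\]

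The heart of the argument is then a bookkeeping lemma: the assignment $(\alpha, \beta) \mapsto \bigl(\alpha g_{1,i} g_{2,i}^{-1} \beta^{-1},\; \alpha g_{1,i}\bigr)$ is a bijection between $G_\gamma \times G_\delta$ and the set $\{(g, s) : s \in G_\gamma g_{1,i} \cap g G_\delta g_{2,i}\}$. Substituting into the expansion of $\pair{v,w}$, swapping the order of the sums over $i$ and $g$, and using this bijection one obtains
\[
\pair{v,w} = \sum_{i=1}^k \epsilon_i \sum_{\alpha \in G_\gamma}\sum_{\beta \in G_\delta} \alpha\, g_{1,i}\, g_{2,i}^{-1}\, \beta^{-1} = \sum_{i=1}^k \epsilon_i\, \zeta_\gamma\, g_{1,i}\, g_{2,i}^{-1}\, \overline{\zeta_\delta}.
\]
Since $G_\delta$ is a subgroup, inversion permutes its elements and $\overline{\zeta_\delta} = \zeta_\delta$, so this equals $\sum_i \epsilon_i\, c(\gamma, \delta, p_i)$. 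Factoring $\zeta_\gamma$ to the left and $\zeta_\delta$ to the right gives the second stated form.

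The main obstacle is purely bookkeeping: tracking how the left $G$-action on $X$ intertwines with the marking of sheets by $G$ (hence translating $G_\delta g_{2,i}$ to $gG_\delta g_{2,i}$ rather than $G_\delta g_{2,i}g$), and keeping the noncommutative multiplication in $\Z[G]$ straight so that the $\zeta_\gamma$ and $\zeta_\delta$ emerge on the correct sides. The path-dependence of $g_{1,i}$ and $g_{2,i}$ is already controlled by \Cref{lemma:sheetindexWD}, which guarantees that the product $g_{1,i} g_{2,i}^{-1}$ modulo the double coset depends only on $p_i$.
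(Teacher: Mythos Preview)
Your proposal is correct and follows essentially the same route as the paper: localize the intersection count over the fibers $f^{-1}(p_i)$, identify the relevant sheets as the cosets $G_\gamma g_{1,i}$ and $gG_\delta g_{2,i}$, and then verify the combinatorial identity $\sum_{g}\lvert G_\gamma g_{1,i}\cap gG_\delta g_{2,i}\rvert\, g = \zeta_\gamma\, g_{1,i}g_{2,i}^{-1}\, \zeta_\delta$. The only cosmetic difference is that you package this last step as an explicit bijection $G_\gamma\times G_\delta \to \{(g,s)\}$ (and then note $\overline{\zeta_\delta}=\zeta_\delta$), whereas the paper expands the product $\zeta_\gamma\, g_{1,i}g_{2,i}^{-1}\, \zeta_\delta$ and counts how often each $g$ occurs; the two are the same identity read in opposite directions.
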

\begin{proof}
By \eqref{equation:relint}, $\pair{v,w}$ is computed as follows:
\[
\pair{v,w} = \pair{[\gamma], [\delta]} = \sum_{g \in G} (\gamma, g \delta) g.
\]
For fixed $g \in G$, the intersection number $(\gamma, g \delta)$ has a local formula given by summing the local intersection numbers $\epsilon_i$ at the crossings of $\gamma$ with $g \delta$. Such crossings appear only at points in the fibers $f^{-1}(p_i)$. Let $c_{i,g}$ denote the number of crossings between $\gamma$ and $g \delta$ that occur in the fiber $f^{-1}(p_i)$. Then
\[
\sum_{g \in G} (\gamma, g \delta) g = \sum_{g \in G}\sum_{i= 1}^k \epsilon_i c_{i,g} g = \sum_{i =1}^k \epsilon_i \sum_{g \in G} c_{i,g}g.
\]
Recall that we have identified the fiber $f^{-1}(p_i)$ with $G$ in such a way that the local branches of $\gamma$ lie in sheets corresponding to some coset $G_\gamma g_{1,i}$, and the local branches of $\delta$ lie in sheets corresponding to a coset $G_\delta g_{2,i}$. Then the local branches of $g \delta$ lie in the sheets $g G_\delta g_{2,i}$, and so $c_{i,g}$ can be computed by the following expression:
\[
c_{i,g} = \abs{G_\gamma g_{1,i} \cap g G_\delta g_{2,i}} = \abs{G_\gamma (g_{1,i} g_{2,i}^{-1}) \cap g G_\delta}.
\]
To finish the argument, we claim that 
\[
\sum_{g \in G} \abs{G_\gamma (g_{1,i} g_{2,i}^{-1}) \cap g G_\delta} g = \left( \sum_{f \in G_\gamma} f \right) g_{1,i}g_{2,i}^{-1} \left( \sum_{h \in G_\delta} h \right) = \zeta_\gamma (g_{1,i} g_{2,i}^{-1}) \zeta_h = c(\gamma, \delta, p_i).
\]
To see this, we expand the double summation:
\[
\left( \sum_{f \in G_\gamma} f \right) (g_{1,i} g_{2,i}^{-1}) \left( \sum_{h \in G_\delta} h \right) = \sum_{(f,h) \in G_\gamma \times G_\delta} f g_i h.
\]
Thus $g \in G$ appears in this sum as many times as $g$ admits an expression of the form $g = f(g_{1,i}g_{2,i}^{-1}) h$ for some $f \in G_\gamma, h \in G_\delta$, or equivalently, the number of equalities $f (g_{1,i}g_{2,i}^{-1}) = g h^{-1}$, i.e. the cardinality $\abs{G_\gamma (g_{1,i}g_{2,i}^{-1}) \cap g G_\delta} = c_{i,g}$.
\end{proof}

In practice, \Cref{lemma:localformula} can be applied as follows: tracing along $f(\gamma) \subset Y$, at each crossing $p_i$ with $f(\delta)$, add $\epsilon_i c(\gamma, \delta, p_i)$, where $\epsilon_i = 1$ if and only if the local branches of $\gamma, \delta$ {\em in that order} are positively-oriented. As a corollary of this point of view, we examine how the formula specializes in the case of self-intersection.
\begin{corollary}\label{corollary:selfint}
Let $v \in H_1(X;\Z)$ be represented by an oriented cycle $\gamma$ subject to \Cref{standingconventions}, self-intersecting at points $p_1, \dots, p_k \in Y$. Suppose that the local sheet index at $p_i$ is given by the double coset $G_v g_i G_v$ when the local orientation of the branches is positive. Then
\[
\pair{v,v} = \zeta_\gamma \sum_{i = 1}^k (g_i - g_i^{-1}) \zeta_\gamma.
\]
\end{corollary}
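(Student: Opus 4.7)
The plan is to apply \Cref{lemma:localformula} with $\delta = \gamma$, reducing the statement to a careful bookkeeping exercise at each self-intersection point. To make the local formula applicable, I would think of $\gamma$ and $\delta$ as two ``copies'' of the same cycle (or equivalently, take $\delta$ to be a small transverse pushoff of $\gamma$); the point is that a single self-intersection $p_i$ of $f(\gamma) \subset Y$ gives rise to \emph{two} ordered crossings between $f(\gamma)$ and $f(\delta)$, one for each choice of which local branch plays the role of the ``$\gamma$-branch'' and which plays the ``$\delta$-branch''.

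Concretely, at a self-intersection $p_i$ I would label the two local branches of $f(\gamma)$ by $A$ and $B$, ordered so that the pair $(A,B)$ has positive local intersection number $\epsilon_i = +1$; reversing the ordering to $(B,A)$ then forces $\epsilon_i = -1$. Under a fixed identification $f^{-1}(p_i) \cong G$, the local branches of $\gamma$ above $A$ lie in a coset $G_v g_1$ and those above $B$ lie in a coset $G_v g_2$. By assumption the positive local sheet index is $G_v g_1 g_2^{-1} G_v = G_v g_i G_v$. Applying \Cref{definition:localfactor} to the ordering $(A,B)$ therefore contributes $(+1)\, \zeta_\gamma (g_1 g_2^{-1}) \zeta_\gamma = \zeta_\gamma g_i \zeta_\gamma$, whereas the opposite ordering $(B,A)$ has local sheet index $G_v g_2 g_1^{-1} G_v = G_v g_i^{-1} G_v$ and contributes $(-1)\,\zeta_\gamma g_i^{-1} \zeta_\gamma$. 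Adding these two contributions yields $\zeta_\gamma(g_i - g_i^{-1})\zeta_\gamma$ at the point $p_i$.

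Finally, I would sum the per-point contributions over $i = 1, \ldots, k$, invoke the linearity of $\zeta_\gamma$ as an element of $\Z[G]$ to factor it out on the left and right, and arrive at
\[
\pair{v,v} = \sum_{i=1}^k \zeta_\gamma (g_i - g_i^{-1}) \zeta_\gamma = \zeta_\gamma \left(\sum_{i=1}^k (g_i - g_i^{-1})\right) \zeta_\gamma,
\]
as claimed. The only nontrivial step in this argument is conceptual rather than computational: justifying that \Cref{lemma:localformula} applied to $\pair{[\gamma],[\gamma]}$ genuinely double-counts each self-intersection point with opposite signs. This is best seen either by perturbing $\gamma$ to a transverse pushoff and identifying the resulting crossings near each $p_i$, or by invoking the observation from \Cref{lemma:skewherm} that $\pair{v,v}$ lies in the image of the ``antisymmetrization'' $x \mapsto x - \overline{x}$, which predicts precisely the $g_i - g_i^{-1}$ pattern.
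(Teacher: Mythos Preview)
Your proof is correct and follows essentially the same approach as the paper: apply \Cref{lemma:localformula} with $\delta = \gamma$, and observe that each self-intersection point $p_i$ contributes twice---once with positive orientation and local crossing factor $\zeta_\gamma g_i \zeta_\gamma$, and once with negative orientation and crossing factor $\zeta_\gamma g_i^{-1} \zeta_\gamma$. The paper phrases this as ``tracing along $\gamma$'' and encountering each $p_i$ twice, which is the same bookkeeping you do via the two orderings $(A,B)$ and $(B,A)$.
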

\begin{proof}
As one traces along $\gamma$, each self-intersection point is traversed {\em twice}, once with positive local orientation and local crossing factor $\zeta_\gamma g_i \zeta_\gamma$, and once with negative local orientation and crossing factor $\zeta_\gamma g_i^{-1} \zeta_\gamma$.
\end{proof}

\subsection{Homology of simple stabilizations}\label{subsection:simplestab}
Having defined the relative intersection form, we record here some basic information on the homological effect of a simple stabilization.

\begin{proposition}
\label{proposition:simplehomol}
Let $f^+: X^+ \to Y^+$ be a simple stabilization of $f: X \to Y$ obtained by adding a basic $1$-handle. Then 
\begin{equation}\label{equation:H1Xplus}
H_1(X^+;\Z) \cong H_1(X;\Z) \oplus \Z[G]\pair{x,y} 
\end{equation}
with $\Z[G] \pair{x,y}$ denoting a free $\Z[G]$-module of rank $2$. Under the relative intersection form, $H_1(X;\Z)$ is orthogonal to $\Z[G]\pair{x,y}$, and the restriction of $\pair{\cdot, \cdot}$ to $\Z[G]\pair{x,y}$ is {\em hyperbolic}: \[
\pair{x,y} = 1,\quad \pair{x,x} = \pair{y,y} = 0.
\]
The classes $x,y$ are represented by the {\em based} loops shown in \Cref{figure:simplehomol}. 
\end{proposition}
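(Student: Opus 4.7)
The plan is to run a Mayer--Vietoris argument on the decomposition of $X^+$ induced by $Y^+ = Y \cup H$, where $H = Y^+ \setminus Y$ is the attached basic $1$-handle. Since the stabilization is simple, the restriction of $\phi^+$ to $\pi_1(H)$ is trivial, so $f^{-1}(H)$ splits as a disjoint union $\bigsqcup_{g \in G} H_g$ of copies of $H$, freely permuted by $G$.

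The first step is to identify the terms of the sequence as $\Z[G]$-modules. Let $U, V$ be open regular neighborhoods of $X$ and $f^{-1}(H)$ in $X^+$; then $U \cap V$ deformation retracts onto $f^{-1}(\Delta_0) = \bigsqcup_g \tilde \Delta_{0,g}$. The surface $H$ is genus $1$ with two boundary components, so $H_1(H) \cong \Z^3$, with $[\Delta_0] + [\Delta_1] = 0$ coming from the relation that $\partial H$ bounds. Choosing $\{\xi, \eta, \Delta_0\}$ as a $\Z$-basis gives $H_1(V) \cong \Z[G]\langle \xi, \eta, \Delta_0\rangle$ as a $\Z[G]$-module, and similarly $H_1(U \cap V) \cong \Z[G]\langle \tilde \Delta_0\rangle$, together with $H_0(U \cap V) \cong \Z[G]$, $H_0(U) \cong \Z$, and $H_0(V) \cong \Z[G]$.

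Second, I would observe that the map $H_0(U \cap V) \to H_0(U) \oplus H_0(V)$ restricts to (up to sign) the identity on the second coordinate, hence is injective. This forces the Mayer--Vietoris connecting homomorphism from $H_1(X^+)$ to vanish, so $H_1(X^+)$ is the cokernel of the map $H_1(U \cap V) \to H_1(U) \oplus H_1(V)$ sending the generator $\tilde \Delta_0$ to $([\tilde \Delta_0], -[\Delta_0])$. Quotienting by the $\Z[G]$-orbit of this element identifies $\Delta_0 \in H_1(V)$ with $[\tilde \Delta_0] \in H_1(X)$ and eliminates $\Delta_0$ from the set of generators, producing the claimed decomposition
\[
H_1(X^+) \cong H_1(X) \oplus \Z[G]\langle x, y\rangle,
\]
where $x$ and $y$ are the distinguished based lifts of $\xi$ and $\eta$ through $\tilde *$ depicted in the figure.

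Finally I would compute the relative intersection pairing using the local formula \Cref{lemma:localformula}. Since $\phi^+$ is trivial on $\pi_1(H)$, the stabilizers $G_x, G_y$ are trivial and $\zeta_x = \zeta_y = 1$. The curves $\xi$ and $\eta$ meet transversely in a single point of $H$, at which both local branches lie in the sheet through $\tilde *$, so the local sheet index is $1$ and the local crossing factor is $1$; with the orientation conventions in the figure this gives $\pair{x, y} = 1$. The equalities $\pair{x, x} = \pair{y, y} = 0$ then follow from \Cref{corollary:selfint}, since $\xi$ and $\eta$ are individually simple on $H$ and so the summation is empty. Orthogonality of $H_1(X)$ with $\Z[G]\langle x, y\rangle$ is immediate: any cycle representing a class in $H_1(X)$ can be isotoped off $\tilde \Delta_0$, and is then disjoint from every lift of $\xi, \eta$ living in $f^{-1}(H)$. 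The main point requiring care is the bookkeeping around the boundary relation $[\Delta_0] + [\Delta_1] = 0$ and the matching of abstract cokernel generators with the concrete based lifts $x, y$; the remaining intersection-theoretic computations are straightforward applications of the local formula.
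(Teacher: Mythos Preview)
Your proposal is correct and follows essentially the same approach as the paper: a Mayer--Vietoris argument for the module decomposition, followed by an appeal to the local formula (\Cref{lemma:localformula} and \Cref{corollary:selfint}) for the intersection pairing. The paper's own proof is a terse sketch of exactly this outline, while you have carefully unpacked the Mayer--Vietoris bookkeeping (identifying the terms, checking injectivity on $H_0$ to kill the connecting map, and tracking the boundary class $[\Delta_0]$ through the cokernel), so your version is more detailed but not methodologically different.
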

\begin{proof}
\begin{figure}[ht]
\labellist
\small
\pinlabel $\blue{[\xi]=x}$ [bl] at 178.57 295.44
\pinlabel $\red{[\eta]=y}$ [br] at 147.39 283.44
\pinlabel $\tilde{\Delta_1}$ [tl] at 201.24 280.60
\pinlabel $\tilde{g\Delta_1}$ [tl] at 201.24 215.41
\pinlabel $\tilde{h\Delta_1}$ [tl] at 201.24 147.39
\pinlabel \blue{$gx$} [bl] at 181.40 232.42
\pinlabel \red{$hy$} [tr] at 158.73 138.88
\endlabellist
\includegraphics[scale=1]{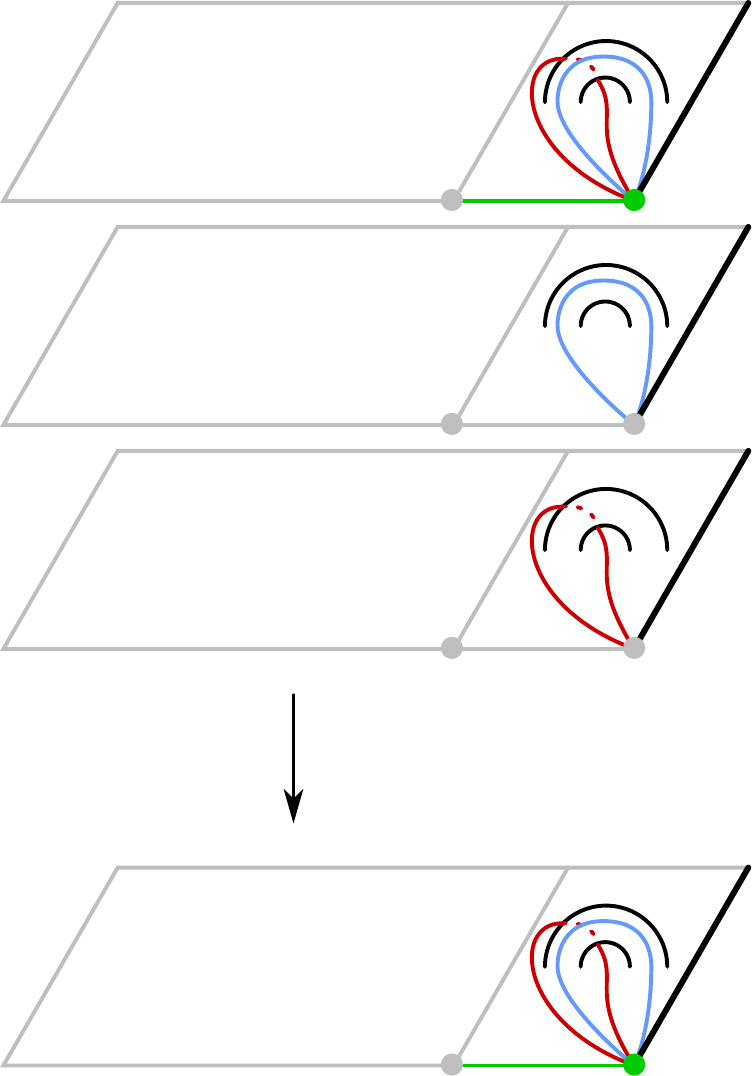}
\caption{Coordinates on the homology of a stable handle.}
\label{figure:simplehomol}
\end{figure}
The complement $X^+ \setminus X$ is a disjoint union of $\abs{G}$ surfaces, each one homeomorphic to a torus with two boundary components. By the Mayer-Vietoris sequence, the isomorphism \eqref{equation:H1Xplus} holds on the level of abelian groups. The action of $G$ on $X^+ \setminus X$ is given by permuting the components, and it follows that \eqref{equation:H1Xplus} holds as an isomorphism of $\Z[G]$-modules. The assertions concerning the relative intersection form follow from the geometric formula for $\pair{\cdot, \cdot}$ given in \Cref{lemma:localformula} in combination with the geometric construction of cycles representing $x,y$ shown in \Cref{figure:simplehomol}, both of which are supported on $X^+ \setminus X$.
\end{proof}

\section{Parity: constructing a quadratic refinement of $\pair{\cdot, \cdot}$}\label{section:q}

Recall from \Cref{corollary:selfint} that a self-intersection of $\gamma \subset Y$ at some point $p\in Y$ contributes a local factor of $\zeta_\gamma(g - g^{-1})\zeta_\gamma$ to $\pair{[\gamma], [\gamma]}$, where $g \in G$ measures the difference in sheets between the local branches appearing above $p \in \gamma$. Note that when $g^2 = 1$, there is no contribution to $\pair{[\gamma], [\gamma]}$: {\em self-crossings with $2$-torsion sheet differences are invisible to the relative intersection pairing.} Thus, isotropy alone is not sufficient to ensure that a class $v \in H_1(X;\Z)$ admits a representative with no self-intersections. The goal of this section is to describe here a certain quadratic refinement of the mod-$2$ relative intersection pairing which can detect such self-crossings; this is given in \Cref{definition:q}.\\

Like the relative intersection form, the quadratic refinement counts self-intersection points of a curve $\gamma$ under $f$. Achieving the sharpest possible count will require a brief detour into group theory, encapsulated in \Cref{lemma:scgprops}. Suppose that $\gamma \subset X$ is a multicurve that is setwise fixed by some finite cyclic subgroup $C \leqslant G$, and suppose that $f(\gamma)$ has a transverse double point at $p \subset Y$. The local sheet index at $p$ is then some double coset of the form $C h C$. Now let $g \in G$ be an element of order $2$. There is an action of $C \times C$ on $G$ where the first factor acts by left multiplication, and the second by right multiplication by the inverse. Define
\[
S_{C,g}: = \Stab_{C \times C}(g)
\]
as the stabilizer of $g$ under this action (note that the orbit is the double coset $C g C$). 

\begin{lemma}\label{lemma:scgprops}
Let $S_{C,g} \leqslant C \times C$ be defined as above.
\begin{enumerate}
    \item $S_{C,g}$ is {\em graph-like}: $S_{C,g} \cap (C \times \{1\}) = \{(1,1)\}$, and hence the projection $S_{C,g} \to C$ along the second factor is an injection,
    \item $S_{C,g}$ is {\em symmetric}: $(c,d) \in S_{C,g}$ if and only if $(d,c) \in S_{C,g}$. Thus, letting $\overline{S_{C,g}}$ denote the embedding of $S_{C,g}$ into $C$ via projection, the factor-swapping involution $(c,d) \mapsto (d,c)$ on $S_{C,g}$ descends to an involution $\iota(c) = d$ on $\overline{S_{C,g}}$.
    \item $\overline{S_{C,g}}$ is normalized by $g$, and conjugation by $g$ induces the involution $\iota$,
    \item The subgroup of $G$ generated by $\overline{S_{C,g}}$ and $g$ has the structure of a semi-direct product $\tilde{S_{C,g}}:= \pair{g} \ltimes \overline{S_{C,g}}$, with $g$ acting on $\overline{S_{C,g}}$ via $\iota$. 
\end{enumerate}
\end{lemma}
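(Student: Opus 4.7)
The whole lemma rests on a single computation: unpacking the stabilizer condition. The action is $(c,d)\cdot x = c\,x\,d^{-1}$, so $(c,d) \in S_{C,g}$ means $cgd^{-1} = g$, i.e.\ $cg = gd$, and using $g^2 = 1$ this rearranges to
\[
d = g^{-1} c g = gcg.
\]
So $S_{C,g} = \{(c,gcg) : c \in C,\ gcg \in C\}$. Each of the four assertions is an immediate consequence of this single identity, and I would present them in order, leaning on part (2) to get the symmetry needed later.

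\textbf{Parts (1) and (2).} For graph-likeness, if $(c,1) \in S_{C,g}$ then $1 = gcg$, hence $c = 1$. Since $S_{C,g}$ is a subgroup, injectivity of the projection along the second factor follows from a standard kernel argument: if $(c_1,d),(c_2,d) \in S_{C,g}$ then $(c_1 c_2^{-1},1) \in S_{C,g}$, forcing $c_1 = c_2$. For symmetry, the relation $d = gcg$ is equivalent (by multiplying on both sides by $g$ and again using $g^2=1$) to $c = gdg$; thus $(c,d) \in S_{C,g}$ iff $(d,c) \in S_{C,g}$. Under the identification of $S_{C,g}$ with its first-factor image $\overline{S_{C,g}}$, the swap involution descends to $\iota(c) = gcg$.

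\textbf{Parts (3) and (4).} Part (3) is then transparent: $\iota(c) = gcg$ is by definition conjugation by $g$, and $\overline{S_{C,g}}$ is normalized by $g$ because $g(gcg)g = c \in C$ shows the conjugation image is again in $\overline{S_{C,g}}$. For part (4), the subgroup $\langle g, \overline{S_{C,g}}\rangle \le G$ contains $\overline{S_{C,g}}$ as a normal subgroup on which $g$ acts by $\iota$; since $g$ has order $2$ and is distinct from the identity of the quotient, the short exact sequence
\[
1 \to \overline{S_{C,g}} \to \langle g,\overline{S_{C,g}}\rangle \to \pair{g} \to 1
\]
splits tautologically using $g$ itself as a section, realizing the claimed semidirect-product structure $\tilde{S_{C,g}} = \pair{g} \ltimes \overline{S_{C,g}}$.

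\textbf{Where to be careful.} None of the steps is technically deep; the one place to be cautious is the possibility that $g \in C$ (forcing $\overline{S_{C,g}} = C$ and $\iota = \mathrm{id}$), where the ``semidirect product'' in (4) becomes a direct product and its image in $G$ collapses onto $C$. This is the only genuine obstacle in the write-up, and I would dispatch it with a brief remark noting that the external semidirect product $\pair{g} \ltimes \overline{S_{C,g}}$ is the object being referred to, and that in the applications of the lemma (appearing in the construction of the quadratic refinement $q$) the element $g$ enters precisely as an order-$2$ sheet difference not in the cyclic stabilizer $C$, so the splitting is honest.
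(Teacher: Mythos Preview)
Your proof is correct and follows essentially the same route as the paper: both unwind the stabilizer condition to the identity $cgd^{-1}=g$ (equivalently $d=g^{-1}cg$) and read off each item from there. Your final caveat about the degenerate case $g\in C$ is a genuine subtlety that the paper's proof does not address; your proposed resolution (interpreting $\tilde{S_{C,g}}$ as the external semidirect product, and noting that in the applications $g$ arises as a sheet difference outside the stabilizer) is a reasonable patch, though you should be aware that the paper later asserts $|\tilde{S_{C,g}}|=2|S_{C,g}|$ without comment, so the issue is not entirely cosmetic.
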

\begin{proof}
For (1), observe that if $(c,1) \in S_{C,g}$, then $c g = g$, i.e. $c = 1$. For (2), we suppose that $(c,d) \in S_{C,g}$, so that there is an equation of the form $c g d^{-1} = g$. Inverting both sides and recalling that $g^{-1} = g$ establishes (2). 

For (3), let $c \in \overline{S_{C,g}}$ be induced from the element $(c,d) \in S_{C,g}$. Then there is an expression $c g d^{-1} = g$, which rearranges to $g^{-1} c g = d$. For (4), we observe that any word $w \in \pair{\overline{S_{C,g}}, \pair{g}}$ has an expression involving at most one $g$, since any subword of the form $g c g$ can be re-written as $\iota(c)$. The semi-direct product structure follows easily.
\end{proof}

\begin{lemma}\label{lemma:evenints}
Let $\gamma \subset X$ be a multicurve such that $\gamma$ is setwise fixed by some finite cyclic subgroup $C \leqslant G$, and let $g \in G$ be of order $2$. Then $\gamma \cap g \gamma \subset X$ decomposes as a union of $\tilde{S_{C,g}}$-orbits and hence $\abs{\gamma \cap g \gamma}$ is divisible by $\abs{\tilde{S_{C,g}}} = 2\abs{S_{C,g}}$.
\end{lemma}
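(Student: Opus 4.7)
\medskip
\noindent\emph{Proof plan.} The natural strategy is to construct an action of $\tilde{S_{C,g}}$ on the finite set $\gamma \cap g\gamma \subset X$ and show that this action is free; the divisibility statement then follows immediately from the orbit-stabilizer theorem. The content of the lemma therefore boils down to two claims: that $\tilde{S_{C,g}}$ preserves the set $\gamma \cap g\gamma$, and that its action is free.

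For the preservation claim, the plan is to verify separately that each of the generating pieces $\overline{S_{C,g}}$ and $\langle g \rangle$ of the semidirect product preserves the intersection, and then invoke \Cref{lemma:scgprops}(4). That $g$ preserves $\gamma \cap g\gamma$ is immediate using $g^2 = 1$:
\[
g(\gamma \cap g\gamma) = g\gamma \cap g^2\gamma = g\gamma \cap \gamma.
\]
For $\overline{S_{C,g}}$, the key observation is that an element $c \in \overline{S_{C,g}}$ by definition arises from some $(c,d) \in S_{C,g}$, which is to say $cgd^{-1} = g$, or equivalently $cg = gd$. Since $\overline{S_{C,g}} \leqslant C$ and $C$ stabilizes $\gamma$ setwise, we have $c\gamma = \gamma$, and the same applies to $d$; hence
\[
c \cdot (g\gamma) = (cg)\gamma = (gd)\gamma = g(d\gamma) = g\gamma,
\]
so $\overline{S_{C,g}}$ preserves both $\gamma$ and $g\gamma$, and therefore their intersection. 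Combining with the action of $g$ and the semidirect-product structure from \Cref{lemma:scgprops}(4) gives a well-defined action of $\tilde{S_{C,g}}$ on $\gamma \cap g\gamma$.

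For freeness, the plan is simply to invoke the fact that $G$ acts freely on $X$: the deck group of any regular covering of a manifold acts freely, so the induced action of the subgroup $\tilde{S_{C,g}} \leqslant G$ on any invariant subset of $X$ is automatically free. Applying orbit-stabilizer to the finite set $\gamma \cap g\gamma$ yields that each orbit has size exactly $|\tilde{S_{C,g}}| = 2|S_{C,g}|$, so the cardinality of the whole intersection is divisible by this number.

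I do not anticipate a serious obstacle in this proof: the group-theoretic scaffolding has already been assembled in \Cref{lemma:scgprops}, so the only real work is the short computation $cg = gd$ that verifies preservation of $g\gamma$ by $\overline{S_{C,g}}$. One small point to keep in mind — but not an obstacle — is that the intersection is a priori finite (one must work within the convention that $\gamma$ and $g\gamma$ are transverse, as ensured by \Cref{standingconventions}), so that $g\gamma \neq \gamma$ as multicurves and hence the counting argument is meaningful.
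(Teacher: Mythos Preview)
Your proposal is correct and follows essentially the same approach as the paper: both verify that $\tilde{S_{C,g}}$ preserves $\gamma \cap g\gamma$ via the identity $cg = gd$ (together with $c\gamma = d\gamma = \gamma$ and $g^2 = 1$), and then invoke freeness of the deck group action on $X$. The only cosmetic difference is that the paper checks a general element in the normal form $g^\epsilon c$ directly, whereas you check the generators $g$ and $c \in \overline{S_{C,g}}$ separately and appeal to \Cref{lemma:scgprops}(4).
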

\begin{proof}
Suppose that $p \in \gamma \cap g \gamma$, and take $g^\epsilon c \in \tilde{S_{C,g}}$ for $\epsilon \in \{0,1\}$. Then
\[
g^\epsilon c p \in g^\epsilon c \gamma \cap g^\epsilon c g \gamma.
\]
Note that $c \gamma = \gamma$ since $\overline{S_{C,g}} \leqslant C$, so that $g^\epsilon c \gamma = g^\epsilon \gamma$. Also note that $c g = g d$ for some $d \in \overline{S_{C,g}}$, and so 
\[
g^\epsilon c g \gamma  = g^{\epsilon+1} d \gamma = g^{\epsilon+1} \gamma.
\]
Thus,
\[
g^\epsilon c p \in g^\epsilon \gamma \cap g^{\epsilon+ 1} \gamma = \gamma \cap g \gamma.
\]
The claimed divisibility follows from the fact that $G$ (and hence $\tilde{S_{C,g}} \leqslant G$) acts {\em freely} on $X$.
\end{proof}

We are now in a position to define the quadratic refinement. We begin in \Cref{definition:qgeom} with a geometric formulation in terms of a choice of representing cycle, and then prove in \Cref{lemma:parityhomol} that this is independent of choice.

\begin{definition}[Parity, geometric definition]\label{definition:qgeom}
Let $\gamma \subset X$ be a multicurve fixed setwise by some cyclic subgroup $C \leqslant G$, and let $G_2^*$ denote the set of elements of order $2$ in $G$. The {\em parity} of $\gamma$ is the vector $q(\gamma) \in (\Z/2\Z)^{G_2^*}$ where the entry indexed by $g \in G_2^*$ is given as 
\[
q_g(\gamma):= \frac{\abs{\gamma \cap g \gamma}}{\abs{\tilde{S_{C,g}}}} \pmod 2,
\]
i.e. the mod-$2$ count of $\tilde{S_{C,g}}$-orbits in the intersection.
\end{definition}

The objective is now to see that $q(\gamma)$ depends only on the homology class $[\gamma]$. To that end, we establish the following lemma.
\begin{lemma}
\label{lemma:eqvtrep}
Let $H\leqslant G$ be a finite subgroup, and suppose $v \in H_1(X;\Z)^H$ is an $H$-invariant class. Then $v$ admits a representative $v = [\gamma]$ with $\gamma \subset X$ an oriented multicurve that is fixed setwise by $H$. Moreover, if $\gamma'$ is another $H$-invariant representative, then $\gamma$ and $\gamma'$ are $H$-equivariantly cobordant: there is a properly embedded $H$-invariant subsurface $\Gamma \subset X \times [0,1]$ such that $\Gamma \cap (X \times \{0\}) = \gamma$ and $\Gamma \cap (X \times \{1\}) = \gamma'$. 
\end{lemma}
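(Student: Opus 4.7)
The plan is to translate the problem into one about $H$-equivariant maps $X \to S^1$ via Poincar\'e--Lefschetz duality, and then extract the multicurve as the preimage of a regular value. First I would observe that since $H$ is a finite subgroup of the freely-acting deck group $G$, it acts freely on $X$; the quotient $X_H := X/H$ is therefore a connected oriented surface of finite type with nonempty boundary, and $\pi : X \to X_H$ is a regular $H$-covering of the type already considered in the paper.

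For the existence statement, I would use Lefschetz duality $H_1(X;\Z) \cong H^1(X, \partial X;\Z)$ to reinterpret the $H$-invariant class $v$ as an $H$-invariant cohomology class $u \in H^1(X, \partial X;\Z)^H$, and then represent $u$ by a homotopy class of pointed maps $(X, \partial X) \to (S^1, *)$. The main task is to promote an arbitrary representative $f$ to a genuinely $H$-equivariant map $\bar f$, where $H$ acts trivially on $S^1$. I would do this by coherently lifting each translate $f \circ h$ to the universal cover $\R \to S^1$ (using homotopies $f \circ h \sim f$, which exist by the $H$-invariance of $u$), averaging in $\R$, and projecting back to $S^1$. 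Once $\bar f$ is in hand, the multicurve $\gamma := \bar f^{-1}(p)$ for a regular value $p \in S^1$ is a properly embedded 1-submanifold of $X$, setwise $H$-invariant because $p$ is fixed by the trivial action on $S^1$; orienting $\gamma$ by the ambient orientations of $X$ and $S^1$ gives $[\gamma] = v$ by PL duality.

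For the equivariant cobordism statement, I would apply the same machinery on the product. Given two $H$-invariant representatives $\gamma_0, \gamma_1$, produce as above $H$-equivariant maps $\bar f_0, \bar f_1$ both representing $u$ and realizing $\gamma_i = \bar f_i^{-1}(p)$. Any homotopy between $\bar f_0$ and $\bar f_1$ can be equivariantized by the same coherent-lift-and-average procedure (with $H$ acting trivially on $I$), producing an $H$-equivariant homotopy $F : X \times I \to S^1$. Then $\Gamma := F^{-1}(p)$ is a properly embedded $H$-invariant subsurface of $X \times I$ with $\Gamma \cap (X \times \{i\}) = \gamma_i$, giving the required equivariant cobordism.

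The main obstacle is the coherent averaging step: the naive $|H|$-fold division of a sum in $S^1$ is multi-valued, so the lifts of the $f \circ h$ to $\R$ must be chosen compatibly across the $H$-orbit for the average to descend to a true $H$-equivariant map. Controlling this is an obstruction-theoretic exercise whose obstructions live in the low-degree cohomology of $H$ with coefficients in integral cohomology of $X$; the hypothesis that $X$ has nonempty boundary (so $X$ is homotopy equivalent to a 1-complex) is what I would rely on to make these obstructions tractable. The rest of the proof --- the transversality yielding an embedded $\gamma$, the orientation check, and the translation from equivariant homotopy to cobordism --- is routine by comparison.
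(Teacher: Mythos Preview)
Your approach is a valid direction and shares with the paper the use of Poincar\'e--Lefschetz duality and maps to $S^1$, but the paper takes a much shorter route that entirely avoids the obstruction-theoretic difficulty you identify. The key simplification is to pass immediately to the quotient $X_H := X/H$. Since $H$ acts freely, an $H$-equivariant map $(X,\partial X) \to (S^1,*)$ with trivial $H$-action on the target is exactly the same thing as a map $(X_H,\partial X_H) \to (S^1,*)$; so rather than trying to average a map on $X$, one should simply ask whether $v$ lies in the image of the pullback from $H_1(X_H;\Z)$. The paper answers this with the transfer map $\tau: H_1(X_H;\Z) \to H_1(X;\Z)^H$, $[\bar\gamma] \mapsto [f_H^{-1}(\bar\gamma)]$: the identity $f_{H,*}\tau = |H|\cdot\id$ together with torsion-freeness of $H_1(X_H;\Z)$ shows $\tau$ is a (split) surjection, and this produces the $H$-invariant multicurve in one line.

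By contrast, your averaging step is genuinely delicate: the obstruction to coherently lifting the homotopies $f\circ h \sim f$ is a class in $H^2(H;H^0(X;\Z)) = H^2(H;\Z)$, which is typically \emph{nonzero} for finite $H$, so the fact that $X$ is a $1$-complex does not by itself make the obstruction group vanish---you would need to argue that the specific class is a coboundary, and the cleanest such argument is precisely the transfer. For the cobordism statement the same shortcut applies: an $H$-invariant multicurve $\gamma$ is automatically the full preimage $f_H^{-1}(f_H(\gamma))$ (freeness again), and injectivity of $\tau$ gives $[f_H(\gamma)] = [f_H(\gamma')]$ in $H_1(X_H;\Z)$; one then builds the cobordism downstairs between $f_H(\gamma)$ and $f_H(\gamma')$ via a homotopy of maps $X_H \to S^1$ and pulls it back, rather than equivariantizing a homotopy on $X$. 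So your plan is not wrong, but the quotient/transfer viewpoint is what makes the proof short.
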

\begin{proof}
Set $X_H:= X/H$, and define the projection map $f_H: X \to X_H$. The theory of the transfer map provides for a homomorphism
\[
\tau: H_1(X_H;\Z) \to H_1(X;\Z)^H
\]
which is represented on the cycle level by $\tau([\gamma]) = [f^{-1}(\gamma)]$. The composition 
\[
f_{H,*} \tau: H_1(X_H;\Z) \to H_1(X_H;\Z)
\]
is given by multiplication by $\abs{H}$. As $H_1(X_H;\Z)$ is torsion-free, we can define an isomorphism
\[
\frac{1}{\abs{H}}: \abs{H}\ H_1(X_H;\Z) \to H_1(X_H;\Z).
\]
We then see that $\frac{1}{\abs{H}} f_{H,*}$ splits $\tau$, and so $\tau$ is a surjection. This implies the first claim, than any $v \in H_1(X;\Z)^H$ admits a representative by an $H$-invariant multicurve: represent $v = \tau([\bar \gamma])$ for suitable $[\bar \gamma] \in H_1(X_H;\Z)$. 

Now suppose that $\gamma, \gamma'$ are two $H$-invariant multicurve representatives of a class $v \in H_1(X;\Z)^H$. Then, as (unweighted) multicurves, $\gamma = f^{-1}(f(\gamma))$ and likewise $\gamma' = f^{-1}(f(\gamma'))$, and moreover $[f(\gamma)] = [f(\gamma')]$ as elements of $H_1(X_H;\Z)$. Since $K(\Z,1) = S^1$, Poincar\'e duality implies that $f(\gamma)$ (resp. $f(\gamma')$) can be represented as the level set $\sigma_i^{-1}(0)$ of a smooth map $\sigma_i: X_H \to S^1$ for $i = 0$ (resp. $i = 1$). Moreover, since $[f(\gamma)] = [f(\gamma')]$, the maps $\sigma_0$ and $\sigma_1$ are {\em homotopic} via some homotopy $\sigma_t, t \in [0,1]$. Standard transversality arguments then imply that $\sigma_t$ can be chosen so that $0$ is a regular value of $\sigma_t: X_H \times [0,1] \to S^1$, and so $M = \sigma_t^{-1}(0) \subset X_H \times [0,1]$ provides a cobordism between $f(\gamma)$ and $f(\gamma')$. Taking the preimage $f^{-1}(M)$ then gives an $H$-equivariant cobordism between $\gamma$ and $\gamma'$ in $X \times [0,1]$.
\end{proof}

\begin{lemma}\label{lemma:parityhomol}
The parity $q(\gamma)$ depends only on the class $[\gamma] \in H_1(X;\Z)$.
\end{lemma}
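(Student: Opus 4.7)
The plan is to exhibit the two parity vectors as boundary-counts of the same equivariantly-perturbed intersection $1$-manifold, organized by the action of the finite group $\tilde{S_{C,g}}$. Given two $C$-invariant representatives $\gamma_0, \gamma_1$ of $v = [\gamma]$, I would first apply \Cref{lemma:eqvtrep} with $H = C$ to produce a properly embedded $C$-equivariant cobordism $\Gamma \subset X \times [0,1]$ between them. Fix $g \in G_2^*$, and assume $g \gamma_i \neq \gamma_i$ for $i = 0, 1$, which is the setting in which $q_g$ is well-defined. Equivariant transversality (valid because $\tilde{S_{C,g}} \leqslant G$ is finite and acts freely on $X \times [0,1]$) allows me to perturb $\Gamma$ rel $\partial$ by a small $C$-equivariant isotopy so that $\Gamma \pitchfork g \Gamma$ in the interior, without disturbing the already-transverse boundary intersections.

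The intersection $\Gamma \cap g\Gamma$ is then a compact $1$-manifold with boundary $(\gamma_0 \cap g\gamma_0) \sqcup (\gamma_1 \cap g \gamma_1)$, hence a disjoint union of circles and arcs. \Cref{lemma:scgprops} guarantees that $\tilde{S_{C,g}}$ preserves both $\Gamma$ and $g\Gamma$: for $(c,d) \in S_{C,g}$ we have $c\Gamma = \Gamma$ and $c g \Gamma = g d \Gamma = g \Gamma$, while $g$ itself stabilizes the intersection since $g^2 = 1$. Because $G$ acts freely on $X$, the induced action of $\tilde{S_{C,g}}$ on $\Gamma \cap g\Gamma$ is free; and because this action preserves the two boundary levels $X \times \{0\}$ and $X \times \{1\}$, it respects the trichotomy of arc types according to whether both endpoints land in $\gamma_0 \cap g\gamma_0$, both in $\gamma_1 \cap g\gamma_1$, or one in each. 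Letting $a, b, c$ denote the number of arcs in each class respectively, a boundary count gives
\[
\abs{\gamma_0 \cap g\gamma_0} - \abs{\gamma_1 \cap g\gamma_1} = 2(a - b).
\]
Since $\abs{\tilde{S_{C,g}}}$ divides both $a$ and $b$, the right-hand side is divisible by $2\abs{\tilde{S_{C,g}}}$, and dividing by $\abs{\tilde{S_{C,g}}}$ yields $q_g(\gamma_0) \equiv q_g(\gamma_1) \pmod 2$.

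The principal technical obstacle is the equivariant transversality step: I need $\Gamma$ to be isotoped, through $C$-invariant maps and rel $\partial$, so that $\Gamma$ and $g\Gamma$ meet transversely in the interior while leaving the prescribed boundary unchanged. Since both $C$ and $\tilde{S_{C,g}}$ are finite and act freely on $X \times [0,1]$, this reduces to ordinary transversality on the quotient, but it must be set up carefully to preserve the specified boundary behavior. A secondary concern is that $g$ might stabilize $\gamma_i$ setwise (giving $g\gamma_i = \gamma_i$, which makes $\abs{\gamma_i \cap g\gamma_i}$ infinite); such $g$ are degenerate for $q_g$ and can be avoided by a generic $C$-equivariant isotopy of $\gamma_i$ within its homology class, so they do not genuinely arise in the parity vector.
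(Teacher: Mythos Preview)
Your approach matches the paper's: build a $C$-equivariant cobordism $\Gamma$, intersect with $g\Gamma$, and count arc types modulo the $\tilde{S_{C,g}}$-action. The gap is at the sentence ``Since $\abs{\tilde{S_{C,g}}}$ divides both $a$ and $b$.'' You have established that $\tilde{S_{C,g}}$ acts freely on the \emph{points} of $\Gamma \cap g\Gamma$ and preserves arc types, but divisibility of the arc counts requires freeness of the action on the \emph{set of arcs}. These are not the same: an element $h \in \tilde{S_{C,g}}$ of order two could in principle send an arc $A$ of type $a$ (both ends in $\gamma_0 \cap g\gamma_0$) to itself while exchanging its two endpoints. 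In that scenario the $\tilde{S_{C,g}}$-orbit of $A$ has size $\abs{\tilde{S_{C,g}}}/2$, and your divisibility claim fails.

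The paper treats exactly this point as the crux of the argument (stated as a Claim) and rules it out via a Morse-theoretic trick: project $A$ to $[0,1]$, count critical points to find a level $X_t$ meeting $A$ in an odd number of points, and observe that an $h$-invariant set in $X_t$ must have even cardinality since $h$ acts freely there. A shorter route you could take instead: if $hA = A$, then $h|_A$ is a self-homeomorphism of an interval, and any such map either fixes both endpoints or reverses them; in the first case $h$ fixes a boundary point, and in the second $h|_A$ is orientation-reversing on $[0,1]$ and hence has an interior fixed point by the intermediate value theorem. Either way you contradict the free action of $G$ on $X \times [0,1]$. Once this is in place, your counting argument goes through and is equivalent to the paper's.
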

\begin{proof}
Let $C \leqslant G$ be a finite cyclic subgroup and let $\gamma, \gamma' \subset X$ be $C$-invariant oriented multicurves determining the same class in $H_1(X;\Z)^C$. By \Cref{lemma:eqvtrep}, there is some $C$-invariant properly-embedded subsurface $\Gamma\subset X \times I$ realizing a cobordism between $\gamma$ and $\gamma'$. Altering $\Gamma$ by a small $C$-equivariant perturbation if necessary, the intersection $M = \Gamma \cap g \Gamma$ is a properly-embedded $1$-submanifold of $X \times I$ invariant under the action of $\tilde{S_{C,g}} \leqslant G$. 

In this framework, the value $q_g(\gamma)$ is given as
\[
q_g(\gamma) = \frac{\abs{M \cap (X \times \{0\})}}{\abs{\tilde{S_{C,g}}}} \pmod 2;
\]
likewise
\[
q_g(\gamma') = \frac{\abs{M \cap (X \times \{1\})}}{\abs{\tilde{S_{C,g}}}} \pmod 2.
\]
To see these are the same, we analyze the components of $M$. As a compact $1$-manifold with boundary, $M$ decomposes as a finite number of circles and properly-embedded arcs. The circles do not intersect $X \times \partial I$ and so do not contribute to our analysis. The arcs of $M$ come in three types, depending on whether $0,1,$ or $2$ ends are embedded in $X \times \{0\}$; say an arc is of {\em type $i$} if it has $i$ ends in $X \times \{0\}$. The following claim is the central fact from which the lemma will follow.

\para{Claim} {\em The action of $\tilde{S_{C,g}}$ on the arcs of $M$ is a {\em free} type-preserving involution.}\\

Modulo the claim, we see how the result follows. If the arcs in some orbit $\tilde{S_{C,g}}A$ are of type $1$, then this orbit contributes $1$ to each of $q_g(\gamma), q_g(\gamma')$. If the arcs in the orbit are of type $0$ or $2$, then it contributes $2$ to one of $q_g(\gamma)$ or $q_g(\gamma')$ and $0$ to the other. It follows that $q_g(\gamma) = q_g(\gamma') \pmod 2$. 

We prove the claim. It is first of all clear that the action is type-preserving, since $G$ fixes each $X_t:= X \times \{t\}$. If $A \subset M$ is of type $1$, we consider $p = A \cap X_0$. Then for $h \in \tilde{S_{C,g}}$, we have $h A \cap X_0 = h p$, and $hp \ne p$ since $h$ acts freely on each level $X_t$. Hence $hA \ne A$. Suppose next that $A$ is of type $2$; let $A \cap X \times \{0\} = \{p,q\}$. Suppose that $hA = A$ for some $h \in \tilde{S_{C,g}}$. Then the set $\{p,q\} \subset X$ must be $h$-invariant, and since $h$ acts freely, this shows $h^2 = 1$. Consider the projection $p_2: A \to I$. By perturbing $\Gamma$ if necessary, we can assume that $p_2$ is a Morse function for $A$. Each critical point changes the Euler characteristic of the sublevel set by $1$, and since the sublevel set for small values of $t$ has Euler characteristic $2$ and $A$ itself has Euler characteristic $1$, it follows that there must be an odd number of critical points, and thus some value of $t$ for which $A \cap X_t$ has odd cardinality. On the other hand, the action of $h$ on $X_t$ is free. It follows that $A \cap X_t$ cannot be $h$-invariant, and hence $A$ itself is not fixed by $h$ as claimed. The same argument can be applied to $A$ of type $0$.
\end{proof}

Following \Cref{lemma:parityhomol}, we make the following definition.
\begin{definition}\label{definition:q}
The {\em parity} is the function $q: H_1(X;\Z) \to (\Z/2\Z)^{G_2^*}$ given by $q(v) = q(\gamma)$, where $\gamma$ is any representative of $v \pmod 2$ as a multicurve (we take $q(v) = \vec 0$ if $v = \vec 0 \pmod 2$).
\end{definition}

We record a local formula for $q$ analogous to \Cref{lemma:localformula}.
\begin{lemma}
\label{lemma:qlocal}
Let $v\in H_1(X;\Z)$ have finite cyclic stabilizer group $G_v$, and suppose that $v \pmod 2$ is represented by the $G_v$-invariant multicurve $\gamma \subset X$. Enumerate the self-intersection points of $f(\gamma)$ as $p_1, \dots, p_k \subset Y$, and suppose that $p_i$ has local sheet index $G_v g_i G_v$. Then for any $g \in G_2^*$,
\[
q_g(v) = (\# G_v g_i G_v = G_v g G_v) \pmod 2.
\]
\end{lemma}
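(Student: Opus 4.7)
The plan is to unwind the definition $q_g(v) = q_g(\gamma) = |\gamma \cap g\gamma|/|\tilde{S_{G_v,g}}| \pmod 2$ and compute $|\gamma \cap g\gamma|$ by localizing the count of transverse intersection points in $X$ over the self-intersection points $p_1, \dots, p_k$ of $f(\gamma)$ in $Y$. The goal is to show that each $p_i$ contributes exactly $|\tilde{S_{G_v,g}}|$ points to $\gamma \cap g\gamma$ when $G_v g_i G_v = G_v g G_v$, and contributes zero otherwise; dividing by $|\tilde{S_{G_v,g}}|$ and reducing mod $2$ will then give the desired formula.

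First, since both $\gamma$ and $g\gamma$ are $f$-transverse multicurves in $X$, every point of $\gamma \cap g\gamma$ projects to some self-intersection point $p_i$ of $f(\gamma)$ in $Y$, giving a decomposition $\gamma \cap g\gamma = \bigsqcup_i (\gamma \cap g\gamma)_i$. Fix $p_i$ and choose an identification $f^{-1}(p_i) \cong G$ so that the two local branches of $\gamma$ above $p_i$ lie in the sheets $G_v h_1$ and $G_v h_2$, with $h_1 h_2^{-1}$ representing the double coset $G_v g_i G_v$. The two local branches of $g\gamma$ above $p_i$ then lie in $gG_v h_1$ and $gG_v h_2$.

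Next, determine which sheets $h$ host a transverse crossing of $\gamma$ with $g\gamma$. A sheet contributes precisely when it belongs to one of the two \emph{mixed} intersections $A_1 := G_v h_1 \cap g G_v h_2$ or $A_2 := G_v h_2 \cap g G_v h_1$: there one local branch of $\gamma$ meets a distinct local branch of $g\gamma$ transversely at the lift of $p_i$. The \emph{parallel} intersections $G_v h_j \cap g G_v h_j$ do not contribute, because $G_v$-equivariance of $\gamma$ and uniqueness of the lift of each local branch of $f(\gamma)$ to a given sheet force $\gamma|_h = g\gamma|_h$ in such sheets (the arcs overlap rather than cross). Observe that $A_1 = g A_2$ since $g^2 = 1$, and that both sets are empty unless $g \in G_v h_1 h_2^{-1} G_v = G_v g_i G_v$. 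When they are nonempty, a short orbit-stabilizer calculation using the $(G_v \times G_v)$-action on $G$ given by $(c,d) \cdot x = cxd^{-1}$ identifies the elements $kh_1 \in A_1$ with solutions $(k,l) \in G_v \times G_v$ of $k (h_1 h_2^{-1}) l^{-1} = g$, which form a coset of $S_{G_v, g}$. Hence $|A_1| = |A_2| = |S_{G_v,g}|$, and $p_i$ contributes a total of $2|S_{G_v, g}| = |\tilde{S_{G_v,g}}|$ points.

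Summing over $i$ yields $|\gamma \cap g\gamma| = |\tilde{S_{G_v,g}}| \cdot \#\{i : G_v g_i G_v = G_v g G_v\}$, and dividing by $|\tilde{S_{G_v,g}}|$ and reducing mod $2$ gives the lemma. The main obstacle is the careful justification that the parallel sheets contribute no transverse crossings; this uses the full strength of the $G_v$-invariance of $\gamma$ (guaranteed by \Cref{lemma:eqvtrep}) and is exactly what makes the count above each $p_i$ divisible by $|\tilde{S_{G_v, g}}|$, in a local refinement of the global divisibility statement of \Cref{lemma:evenints}.
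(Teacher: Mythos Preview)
Your proof is correct and follows essentially the same approach as the paper's: both localize the count of $\gamma \cap g\gamma$ over the fibers $f^{-1}(p_i)$, identify the contributing crossings with the two ``mixed'' coset intersections, and use orbit--stabilizer for the $(G_v \times G_v)$-action to see that each contributes $|S_{G_v,g}|$ points (the paper normalizes so that $h_1 = 1$, $h_2 = g_i$, but this is cosmetic). Your explicit treatment of the ``parallel'' sheets is a point the paper leaves implicit when it asserts there are only two types of intersection.
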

\begin{proof}
By definition, 
\[
q_g(v) = \frac{\abs{\gamma \cap g \gamma}}{\abs{\tilde{S_{G_v,g}}}}.
\]
The intersection points all occur in the fibers $f^{-1}(p_i)$. Identify the local branches of $\gamma$ above some $p_i$ with the right cosets $G_v$ and $G_v g_i$. Then the intersection points of $\gamma$ with $g \gamma$ are of two (mutually-exclusive) types: a branch of $\gamma$ in $G_v$ intersecting a branch of $g \gamma$ in $g G_v g_i$, and a branch of $\gamma$ in $G_v g_i$ intersecting a branch of $g \gamma$ in $g G_v$. There are $\abs{G_v \cap g G_v g_i} = \abs{G_v g_i \cap g G_v}$ of each type. 

To proceed, we count $\abs{G_v g_i \cap g G_v}$, i.e. solutions $(h_1, h_2) \in G_v \times G_v$ to the equation $h_1 g_i =  g h_2$. If the double cosets $G_v g G_v$ and $G_v g_i G_v $ are not equal, there are no solutions, and hence no local contribution to $q_g(v)$. If $G_v g G_v = G_v g_i G_v $, then the orbit-stabilizer theorem implies that the solutions are in bijection with the stabilizer $S_{G_v,g}$ of $g$ under the action of $G_v \times G_v$. Thus in total, when $G_v g_i G_v = G_v g G_v$, there are $2 \abs{S_{G_v,g}} = \abs{\tilde{S_{G_v,g}}}$ intersections in the fiber above $p_i$, and this contributes $1$ to the value $q_g(v)$.
\end{proof}

For later use, we record a crucial structural property of $q$. 
\begin{lemma}\label{lemma:qprops}
Fix $g \in G_2^*$ and $x,y \in H_1(X;\Z)$ such that $G_x = G_y = G_{x+y} = \pair{1}$. The function $q_g: H_1(X;\Z) \to \Z/2\Z$ is then a quadratic refinement of the ``$g$-twisted intersection form'' $(\cdot, g \cdot)\pmod 2$:
\[
q_g(x+y) = q_g(x) + q_g(y) + (x, g y) \pmod 2.
\]
\end{lemma}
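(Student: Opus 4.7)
The plan is to compute $q_g$ on each of $x$, $y$, and $x+y$ from carefully chosen embedded representatives and compare term by term. I would begin by choosing embedded oriented multicurve representatives $\gamma_x, \gamma_y \subset X$ of $x, y$, and then perturb so that $\gamma_x$ and $\gamma_y$ are mutually transverse in $X$ and the six finite sets
\[
\gamma_x \cap \gamma_y,\ g\gamma_x \cap g\gamma_y,\ \gamma_x \cap g\gamma_x,\ \gamma_y \cap g\gamma_y,\ \gamma_x \cap g\gamma_y,\ \gamma_y \cap g\gamma_x
\]
are pairwise disjoint. This is a standard transversality argument, since each pairwise intersection of two finite sets in a surface is generically empty. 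Because the hypotheses give $G_x = G_y = G_{x+y} = \pair{1}$, the multicurve stabilizers $G_{\gamma_x}$, $G_{\gamma_y}$, and $G_\gamma$ for any multicurve $\gamma$ representing $x+y$ are also trivial, so $\tilde{S_{\pair{1}, g}} = \pair{g}$ has order $2$ and \Cref{definition:qgeom} reads $q_g(v) \equiv \tfrac{1}{2}\abs{\gamma_v \cap g\gamma_v} \pmod 2$.

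Next I would form $\gamma$ representing $x + y$ by applying the orientation-respecting resolution at each crossing point of $\gamma_x \cap \gamma_y$; similarly, $g\gamma$ is the orientation-respecting resolution of $g\gamma_x \cup g\gamma_y$ at the crossings of $g\gamma_x \cap g\gamma_y$. Each resolution is a local modification supported in an arbitrarily small neighborhood of a crossing point, and the disjointness arranged at the outset allows these neighborhoods to be chosen to avoid all of the four loci $\gamma_x \cap g\gamma_x$, $\gamma_y \cap g\gamma_y$, $\gamma_x \cap g\gamma_y$, $\gamma_y \cap g\gamma_x$. Consequently
\[
\abs{\gamma \cap g\gamma} = \abs{\gamma_x \cap g\gamma_x} + \abs{\gamma_y \cap g\gamma_y} + \abs{\gamma_x \cap g\gamma_y} + \abs{\gamma_y \cap g\gamma_x}.
\]
Since $g^2 = 1$ and $G$ acts freely on $X$, translation by $g$ yields $\abs{\gamma_y \cap g\gamma_x} = \abs{g\gamma_y \cap \gamma_x} = \abs{\gamma_x \cap g\gamma_y}$.

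For the finish, I would invoke \Cref{lemma:evenints} (with $C = \pair{1}$) to conclude that both $\abs{\gamma_x \cap g\gamma_x}$ and $\abs{\gamma_y \cap g\gamma_y}$ are even, divide the displayed identity through by $2$, and reduce mod $2$, obtaining
\[
q_g(x+y) \equiv q_g(x) + q_g(y) + \abs{\gamma_x \cap g\gamma_y} \pmod 2.
\]
An unsigned count of transverse intersection points reduces mod $2$ to the signed count, so $\abs{\gamma_x \cap g\gamma_y} \equiv (\gamma_x, g\gamma_y) = (x, gy) \pmod 2$, which is the claimed identity. The most delicate step is the bookkeeping around the resolution: verifying that the local smoothing of a crossing does not alter the intersection count $\abs{\gamma \cap g\gamma}$ depends on the pairwise disjointness of the six loci above, which has to be arranged carefully in the genericity step at the start. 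Everything else reduces to modular arithmetic.
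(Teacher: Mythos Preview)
Your proof is correct and follows essentially the same strategy as the paper: represent $x$ and $y$ by multicurves, compute $q_g(x+y)$ by decomposing the intersection of the combined representative with its $g$-translate into the four ``types'' $\gamma_x \cap g\gamma_x$, $\gamma_y \cap g\gamma_y$, $\gamma_x \cap g\gamma_y$, $\gamma_y \cap g\gamma_x$, and identify the first two with $q_g(x)$, $q_g(y)$ and the cross-terms with $(x,gy)$.

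The one genuine difference is that the paper works directly with the union $\gamma \cup \delta$ as a (not necessarily embedded) cycle representing $x+y$ and counts $\langle g\rangle$-orbits in $(\gamma\cup\delta)\cap g(\gamma\cup\delta)$, whereas you first resolve the crossings $\gamma_x\cap\gamma_y$ to obtain an honest embedded multicurve $\gamma$ before applying \Cref{definition:qgeom}. Your extra step buys you conformity with the letter of the definition (which asks for a multicurve), at the cost of the additional genericity/disjointness bookkeeping you flag at the end. The paper's version is terser but relies implicitly on the homological invariance of $q_g$ to justify using a possibly non-embedded representative. Either way the arithmetic is identical; your pairing of the two cross-terms via $p\mapsto gp$ is exactly the paper's observation that points of type~(3) and type~(4) share $\langle g\rangle$-orbits.
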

\begin{proof}
Since $G_v = \pair{1}$ for $v \in \{x,y,x+y\}$, the corresponding subgroup $\tilde{S_{G_v,g}} = \pair{g}$ has order $2$, and so $q_g(v)$ counts pairs of intersection points ($\pair{g}$-orbits) in the intersection $\gamma \cap g \gamma$ for suitable representatives $\gamma$ of $v$.

Represent $x,y$ by multicurves $\gamma, \delta \subset X$ in accordance with \Cref{standingconventions}. Consider the configuration $(\gamma \cup \delta) \cup (g \gamma \cup g \delta) \subset X$. Then $q_g(x+y)$ is the mod-$2$ count of $g$-orbits in the intersection of the first set with the second. These are of four types: 
\begin{enumerate}
    \item $\gamma \cap g \gamma$
    \item $\delta \cap g \delta$
    \item $\gamma \cap g \delta$
    \item $\delta \cap g \gamma$.
\end{enumerate}
Intersections of type (1) contribute $q_g(x)$ to the total, and likewise those of type (2) contribute $q_g(y)$. If $p \in \gamma \cap g \delta$ is an intersection of type (3), then necessarily $g p \in \delta \cap g \gamma$ is an intersection of type (4). It follows that the number of $g$-orbits appearing in types (3) and (4) is in bijection with the number of intersections $\gamma \cap g \delta$, so that intersections of types (3) and (4) contribute $(x, g y)$ to $q_g(x+y)$ as claimed.
\end{proof}

\section{Necessary conditions: proof of \Cref{maintheorem:necessary}}\label{section:necessary}

Throughout this section, $v \in H_1(X; \Z)$ denotes a relatively geometric class, represented by a component $\tilde \gamma \subset f^{-1}(\gamma)$ for some simple closed curve $\gamma \subset Y$ that is nonseparating. \Cref{maintheorem:necessary} asserts that there are four conditions that a nonseparating relatively-geometric class must satisfy: {\em isotropy, parity, primitivity,} and {\em coherence}. The first two of these (isotropy, parity) will be seen to hold following the work of the previous two sections, and are discussed in \Cref{subsection:intersections}. Following this, we discuss primitivity in \Cref{subsection:primitivity}, and coherence in \Cref{subsection:coherence}. \Cref{maintheorem:necessary} will then follow by assembling \Cref{lemma:isotropic,lemma:parity,lemma:primitive,lemma:coherent}. 

\subsection{Isotropy and parity}\label{subsection:intersections}
\begin{lemma}\label{lemma:isotropic}
Under the standing assumptions of \Cref{section:necessary}, $v$ is {\em isotropic}:
\[
\pair{v,v} = 0.
\]
\end{lemma}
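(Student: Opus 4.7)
The plan is to apply the local self-intersection formula of \Cref{corollary:selfint} directly to the given geometric representative. By \Cref{lemma:cyclicstab}, the hypothesis that $v$ is represented by an elevation of a nonseparating simple closed curve forces the stabilizer $G_v \le G$ to be finite cyclic, so in particular the element $\zeta_v = \sum_{g \in G_v} g \in \Z[G]$ from \eqref{zetadef} is well-defined, and $\tilde\gamma$ is $G_v$-invariant (taking $G_\gamma = G_v$ in the notation of \Cref{subsection:localformula}).

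Now I would observe that because $\gamma \subset Y$ is by assumption simple, its projection $f(\tilde\gamma) \subseteq \gamma$ is an embedded simple closed curve; in particular it has \emph{no} self-intersection points at all, so $\tilde\gamma$ is already in general position with respect to $f$ in the sense of \Cref{standingconventions}. Feeding this representative into \Cref{corollary:selfint}, the index set of self-intersection points $\{p_1,\dots,p_k\}$ is empty, and the formula
\[
\pair{v,v} = \zeta_v \left( \sum_{i=1}^{k} (g_i - g_i^{-1}) \right) \zeta_v
\]
reduces to $\pair{v,v} = 0$.

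There is no real obstacle here: the work has already been done in establishing the local formula and in \Cref{lemma:cyclicstab}. The only thing to be mildly careful about is making sure that the chosen representative $\tilde\gamma$ legitimately satisfies the hypotheses of \Cref{corollary:selfint}, namely that it is setwise fixed by its cyclic stabilizer and that $f(\tilde\gamma)$ is immersed in general position --- both of which are immediate from the assumption that $\gamma$ is a simple closed curve on $Y$.
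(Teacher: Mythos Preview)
Your proof is correct and follows essentially the same approach as the paper: both apply \Cref{corollary:selfint} to the representative $\tilde\gamma$ and observe that, since $\gamma$ is simple on $Y$, the sum over self-intersection points is empty. You include a bit more verification of the hypotheses (invoking \Cref{lemma:cyclicstab} for the finite cyclic stabilizer), which is harmless and arguably clarifying.
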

\begin{proof}
When $\gamma$ is simple, the formula in \Cref{corollary:selfint} that computes $\pair{v,v} = \pair{[\tilde \gamma], [\tilde \gamma]}$ is a sum over the empty set.
\end{proof}

\begin{lemma}\label{lemma:parity}
Under the standing assumptions of \Cref{section:necessary}, $v$ is {\em even}:
\[
q(v) = 0.
\]
\end{lemma}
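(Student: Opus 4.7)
The plan is to apply the local formula for $q$ established in \Cref{lemma:qlocal} directly to the representative $\tilde\gamma \subset X$ of $v$, exactly as was done for the isotropy condition in \Cref{lemma:isotropic}.

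First I would observe that $\tilde\gamma$ is itself a $G_v$-invariant multicurve representing $v$: by \Cref{lemma:cyclicstab}, $G_v$ is finite cyclic and coincides with $\pair{\phi(\gamma_\bullet)}$, which fixes the elevation $\tilde\gamma$ setwise as an oriented simple closed curve. Thus $\tilde\gamma$ satisfies the hypotheses of \Cref{lemma:qlocal}, so $q_g(v)$ is computed from the self-intersection points of $f(\tilde\gamma) \subset Y$.

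Since $v$ is relatively geometric by assumption, $f(\tilde\gamma) = \gamma$ is a simple closed curve, and so the list of self-intersection points $p_1, \dots, p_k$ is empty. By \Cref{lemma:qlocal}, for every $g \in G_2^*$ the value $q_g(v)$ is an empty mod-$2$ count and therefore vanishes. Hence $q(v) = 0$.

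There is essentially no obstacle; the content of the lemma is that the parity invariant was correctly set up in \Cref{section:q} so that it is computed locally at self-crossings of the projected cycle, and so simplicity of $f(\tilde\gamma)$ forces $q(v)$ to vanish for the same reason that simplicity forced $\pair{v,v} = 0$ in \Cref{lemma:isotropic}.
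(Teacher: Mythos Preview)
Your proposal is correct and takes essentially the same approach as the paper: the paper's own proof is the single sentence ``the local formula for $q(v)$ appearing in \Cref{lemma:qlocal} is again a sum over the empty set.'' Your version adds the (reasonable) verification that $\tilde\gamma$ is indeed a $G_v$-invariant representative so that \Cref{lemma:qlocal} applies, but the argument is otherwise identical.
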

\begin{proof}
When $v = [\tilde \gamma]$ is relatively-geometric, the local formula for $q(v)$ appearing in \Cref{lemma:qlocal} is again a sum over the empty set.
\end{proof}

\subsection{Primitivity}\label{subsection:primitivity}
An element of a torsion-free abelian group is commonly called ``primitive'' if it cannot be represented as a nontrivial multiple of some other element. For our purposes, we will require a substantial refinement of this notion, adapted to the setting of $\Z[G]$-modules with skew-Hermitian pairing. We will use the term {\em integrally primitive} for elements that cannot be represented as nontrivial multiples, and reserve the term {\em primitive} for elements as in \Cref{definition:pairing}.

\begin{definition}[Pairing ideal, primitive]\label{definition:pairing}
Given $v \in H_1(X; \Z)$, the {\em pairing ideal} $I_v \normal \Z[G]$ is the left ideal
\[
I_v = \{\pair{u,v}\mid u \in H_1(X; \Z)\}.
\]
An element $v \in H_1(X;\Z)$ with finite cyclic stabilizer $G_v \leqslant G$ is {\em primitive} if the pairing ideal has the form
\[
I_v =\Z[G] \zeta_v
\]
(recall $\zeta_v \in \Z[G]$ is the element defined in \eqref{zetadef}). 
\end{definition}

\begin{lemma}\label{lemma:primitive}
Under the standing assumptions of \Cref{section:necessary}, $v$ is {\em primitive}:
\[
I_v =\Z[G] \zeta_v.
\]
\end{lemma}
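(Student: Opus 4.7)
The plan is to prove the two inclusions separately. The inclusion $I_v \subseteq \Z[G]\zeta_v$ is automatic from the local formula of Lemma~\ref{lemma:localformula}: for any $u \in H_1(X;\Z)$ represented by a cycle $c$, the pairing factors as $\pair{u,v} = \zeta_c \cdot \bigl(\sum_i \epsilon_i g_{1,i}g_{2,i}^{-1}\bigr) \cdot \zeta_v$, which manifestly lies in the left ideal $\Z[G]\zeta_v$. This direction does not even require $\gamma$ to be simple or nonseparating, and it continues to hold verbatim after any stabilization.

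For the reverse inclusion we need to exhibit some $u$ (in $H_1(X^+;\Z)$ after a suitable stabilization) with $\pair{u,v} = \pm\zeta_v$. Since $\gamma$ is nonseparating in $Y$, choose a simple closed curve $\delta_0 \subset Y$ with $i(\gamma,\delta_0) = 1$, and set $g_0 := \phi(\delta_{0,\bullet})$. If $g_0 \in G_v = \pair{\phi(\gamma_\bullet)}$ (recall from the proof of Lemma~\ref{lemma:cyclicstab} that $G_v$ is precisely this cyclic group), then lifting $\delta_0$ through an intersection point on $\tilde\gamma$ already produces a closed loop in $X$. In general, however, $g_0 \notin G_v$ and the naive lift is merely an arc from $\tilde\gamma$ to $g_0\tilde\gamma$, failing to represent an element of $H_1(X;\Z)$. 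This is the precise obstruction stabilization is designed to cure: attach a basic $g_0^{-1}$-handle $H$ to $Y$ along $\Delta_0$ (Definition~\ref{def:basicg}), producing $f^+ : X^+ \to Y^+$ with a new loop $\xi \subset H$ satisfying $\phi^+(\xi) = g_0^{-1}$. A band sum of $\delta_0$ with $\xi$ along an arc running from $\delta_0$ through $\Delta_0$ into the handle (and hence disjoint from $\gamma$) then produces a simple closed curve $\delta \subset Y^+$ still satisfying $i(\gamma,\delta) = 1$ but now with $\phi^+(\delta_\bullet) = g_0\,g_0^{-1} = 1 \in G_v$.

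With $\delta$ in hand, lift it to the curve $\tilde\delta \subset X^+$ passing through the intersection point $\tilde q \in \tilde\gamma$. Since $\phi^+(\delta_\bullet) = 1$ the lift closes up to a loop, and because $\delta$ meets $\gamma$ only at the single point $f(\tilde q)$, the loop $\tilde\delta$ is disjoint from $f^{-1}(\gamma)\setminus\{\tilde q\}$. Consequently $(\tilde\delta,\tilde\gamma) = \pm 1$ while $(\tilde\delta,h\tilde\gamma) = 0$ for every $h\notin G_v$. Applying Lemma~\ref{lemma:localformula} at the unique intersection $\tilde q$ --- where, after normalizing sheets so that $\tilde q \leftrightarrow 1 \in G$, the local cosets of $\tilde\delta$ (with trivial stabilizer) and $\tilde\gamma$ (with stabilizer $G_v$) both contain the identity, so $g_1 = g_2 = 1$ --- yields $\pair{[\tilde\delta],v} = \pm\zeta_v$. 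Hence $\zeta_v \in I_v$, completing the reverse inclusion.

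The main subtle point is the band-sum construction: one must verify that the resulting curve is embedded and that its free homotopy class pushes forward under $\phi^+$ to the product $g_0 \cdot g_0^{-1}$. Everything else --- the local formula, the identification $G_v = \pair{\phi(\gamma_\bullet)}$, and the computation of cosets at a single intersection --- is a direct application of the machinery already developed.
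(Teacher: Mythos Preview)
Your proof is correct and follows essentially the same route as the paper: both establish $I_v \subseteq \Z[G]\zeta_v$ from the structure of the pairing (the paper does this directly from the defining sum rather than via the local formula, but either works), and both obtain the reverse inclusion by stabilizing with a basic $g_0^{-1}$-handle to manufacture a simply-lifting curve $\delta$ meeting $\gamma$ exactly once. One small slip: your remark that ``if $g_0 \in G_v$ then lifting $\delta_0$ already produces a closed loop'' is not right---a single lift of $\delta_0$ closes up only when $g_0 = 1$, not merely when $g_0 \in G_v$---but this is harmless since your general band-sum construction handles all cases uniformly.
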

\begin{figure}[ht]
\labellist
\small
\pinlabel $Y^+$ at 320 20
\endlabellist
\includegraphics[scale=1]{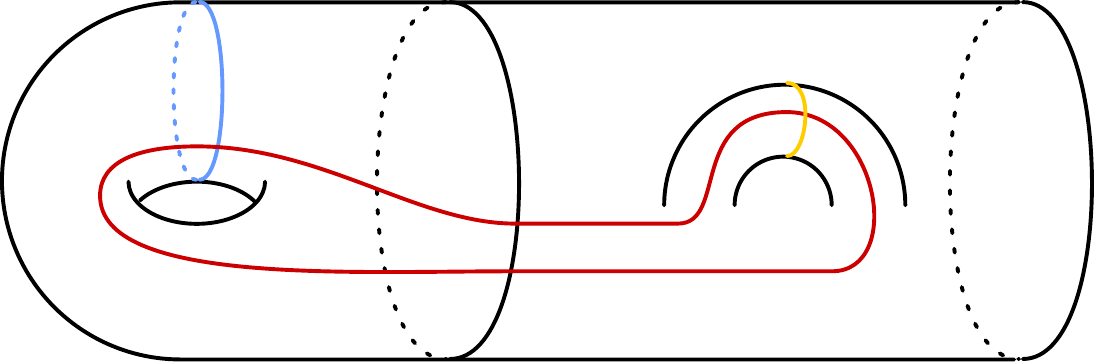}
\caption{Construction of $\delta$ proceeds in two steps. In the first step, since $\gamma$ is nonseparating, we can construct an arc $\alpha \subset X$ based at $\Delta_0$ that crosses $\gamma$ once. Such $\alpha$ determines an element $\phi(\alpha) \in G$. In the second step, we close up $\alpha$ to the simply-lifting simple closed curve $\delta$ by stabilizing by a basic $\phi(\alpha)^{-1}$-handle.}
\label{figure:primitive}
\end{figure}
\begin{proof}
\Cref{lemma:cyclicstab} asserts that any nonseparating, relatively geometric $v$ has finite cyclic stabilizer. To see the containment $I_v \leqslant \Z[G] \zeta_v$, let $\pair{u,v} \in I_v$ be arbitrary. From \eqref{def:relint},
\[
\pair{u,v} = \sum_{g \in G}(u,gv)g = \sum_{g \in G/G_v}\sum_{h \in G_v} (u,gv)gh = \left( \sum_{g \in G/G_v}(u,gv)g\right) \zeta_v.
\]

For the reverse containment, represent $v$ as a component $\tilde \gamma \subset f^{-1}(\gamma)$. Perform the stabilization $X \into X^+$ indicated in \Cref{figure:primitive}. As indicated therein, since $\gamma$ is nonseparating, it is possible to construct a curve $\delta$ with trivial stabilizer (i.e. such that $\phi(\delta_\bullet) = 1$) that crosses $\gamma$ once. By \Cref{lemma:localformula}, it follows that $\pair{[\tilde \delta],[\tilde \gamma]} = \zeta_v$, showing the reverse containment.
\end{proof}

\subsection{Coherence}\label{subsection:coherence}
\begin{definition}[Coherence]\label{definition:coherent}
Let $v \in H_1(X;\Z)$ be given, with finite cyclic stabilizer subgroup $G_v \leqslant G$. Define $X_v:= X/G_v$, and consider the associated intermediate cover $f_v: X \to X_v$ classified by the map $\phi_v: \pi_1(X_v) \to G_v$. Since $G_v$ is abelian, $\phi_v$ factors through $\phi_{v,*}: H_1(X_v; \Z) \to G_v$. Such $v$ is said to be {\em coherent} if $f_{v,*}(v) \in H_1(X_v;\Z)$ is $\abs{G_v}$-divisible and 
\[
G_v = \pair{\phi_{v,*}(f_{v,*}(v)/\abs{G_v})}.
\]
\end{definition}
As an example of a non-coherent element, consider the surface $Y = \Sigma_{2,1}$ with $H_1(Y;\Z)$ endowed with symplectic basis $x_1, y_1, x_2,y_2$, and let $\phi: \pi_1(Y) \to \Z/2\Z$ be given by $\phi(v) = (v,x_1) \pmod 2$. The handle with homology basis $x_2,y_2$ is then a simple stabilization of a mod-$2$ cover of the torus spanned by $x_1,y_1$; let $\tilde x_2, \tilde y_2$ denote the basis for this stabilization as in \Cref{proposition:simplehomol}. Letting $G = \Z/2\Z = \pair{t}$, the element $v = (1+t)\tilde x_2$ has stabilizer $G_v = \Z/2\Z$. The associated cover $X_v$ is just $Y$, and $f_*(v)/2 = x_2$, but $\phi(x_2) = (x_2, x_1) \pmod 2 = 0$ does not generate $\Z/2\Z$.

This example illustrates the role that coherence will play in the proof of \Cref{maintheorem:sufficient}: coherence ensures that a class $v$ have a {\em connected} $G_v$-invariant representative (here, $(1+t)\tilde x_2$ is represented as the full preimage of a representative of $x_2$, but this has {\em two} components). See \Cref{lemma:coherenceconnected}.

\begin{lemma}\label{lemma:coherent}
Under the standing assumptions of \Cref{section:necessary}, $v$ is {\em coherent}.
\end{lemma}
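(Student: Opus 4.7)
The plan is to exhibit an explicit connected simple closed curve $\bar\gamma \subset X_v$ whose preimage in $X$ is exactly $\tilde\gamma$, and use this to read off both the $|G_v|$-divisibility and the generation statement.

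First I would establish the structural input: by \Cref{lemma:cyclicstab}, the stabilizer $G_v$ is finite cyclic, and moreover the proof of that lemma shows that the setwise stabilizer of the oriented curve $\tilde\gamma$ equals the setwise stabilizer of its homology class, namely $G_v = \langle \phi(\gamma_\bullet)\rangle$. In particular $G_v$ acts freely on $\tilde\gamma$ (being a subgroup of $G$, which acts freely on $X$), so the quotient $\bar\gamma := \tilde\gamma/G_v$ is a simple closed curve embedded in $X_v = X/G_v$, and the restriction $\tilde\gamma \to \bar\gamma$ is a cyclic covering of degree $|G_v|$.

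Next, since $G_v$ stabilizes $\tilde\gamma$ setwise, we have the equality $f_v^{-1}(\bar\gamma) = G_v \cdot \tilde\gamma = \tilde\gamma$. This gives two immediate consequences. On the one hand, $f_v$ restricted to $\tilde\gamma$ realizes $\tilde\gamma$ as a connected degree-$|G_v|$ cover of $\bar\gamma$, so on the level of fundamental cycles $f_{v,*}([\tilde\gamma]) = |G_v| [\bar\gamma]$ in $H_1(X_v;\Z)$; this gives the $|G_v|$-divisibility and identifies the quotient $f_{v,*}(v)/|G_v|$ with the class $[\bar\gamma]$. On the other hand, the connectedness of the cover $\tilde\gamma \to \bar\gamma$ classified by $\phi_v$ forces the composition $\pi_1(\bar\gamma) \to \pi_1(X_v) \xrightarrow{\phi_v} G_v$ to be surjective: a loop generating $\pi_1(\bar\gamma) \cong \Z$ must map to an element of order $|G_v|$, i.e. to a generator of $G_v$.

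Finally, since $G_v$ is abelian, $\phi_v$ factors through the abelianization to give $\phi_{v,*}\colon H_1(X_v;\Z) \to G_v$, and under Hurewicz the image of the generator of $\pi_1(\bar\gamma)$ is precisely $[\bar\gamma]$. Thus $\phi_{v,*}([\bar\gamma])$ generates $G_v$, yielding
\[
G_v = \langle \phi_{v,*}(f_{v,*}(v)/|G_v|)\rangle,
\]
which is exactly the coherence condition. There is no serious obstacle: the argument is essentially a bookkeeping exercise assembling \Cref{lemma:cyclicstab} with the covering-space dictionary relating monodromy, degree, and abelianization. The only subtle point to verify carefully is the identity $f_v^{-1}(\bar\gamma) = \tilde\gamma$ (rather than a union of translates), but this follows directly from the fact that $G_v$ is both the stabilizer of $v$ and the setwise stabilizer of $\tilde\gamma$.
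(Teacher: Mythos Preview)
Your proof is correct and follows essentially the same route as the paper's: both arguments define $\bar\gamma = f_v(\tilde\gamma)$, observe that $\tilde\gamma \to \bar\gamma$ is a connected degree-$|G_v|$ cover (yielding $|G_v|$-divisibility), and then invoke covering space theory to conclude that $\phi_{v,*}([\bar\gamma])$ has order $|G_v|$ and hence generates $G_v$. Your version spells out more carefully why the setwise stabilizer of $\tilde\gamma$ coincides with $G_v$ and why $f_v^{-1}(\bar\gamma) = \tilde\gamma$, but these are exactly the facts the paper's terser argument is silently using.
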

\begin{proof}
Suppose $v$ is represented by an elevation $\tilde \gamma$ of a nonseparating simple closed curve $\gamma \subset Y$. Covering space theory asserts that the restriction of $f_v$ to $\tilde \gamma \subset X$ is a covering map onto its image $f_v(\tilde \gamma)$ of degree $\abs{G_v}$. Thus $f_{v,*}([\tilde \gamma]) = \abs{G_v} [f_v(\tilde \gamma)]$ is a $\abs{G_v}$-divisible element of $H_1(X_v;\Z)$ as required. Again by covering space theory, the order of $\phi_{v,*}(f_v(\tilde \gamma)) \in G_v$ is equal to this degree, and so $G_v$ is generated by $\phi_{v,*}(f_v(\tilde \gamma)) = \phi_{v,*}(f_{v,*}(v)/\abs{G_v})$, completing the proof.
\end{proof}

\section{Purely-unital vectors}\label{section:PU}
In these last three sections, we carry out the proof of \Cref{maintheorem:sufficient}. Here, we investigate a special class of elements we call ``purely-unital vectors'', and give a direct, constructive proof that the conditions of \Cref{maintheorem:necessary} suffice to realize a purely-unital vector geometrically. In the last two sections, we will establish the general case via the following strategy: beginning with an arbitrary representative $\delta_0$ for a class $v$, we {\em resolve} self-intersections of $f(\delta_0)$ by routing the crossing points through a new basic $1$-handle. This has the effect of changing the homology class, but when the total change can be realized geometrically, it is possible to account for this change via a Dehn twist, leading to a realization of the original class $v$, and the alterations produced by the resolution process is precisely what the notion of purely-unital vector captures. Accordingly, we describe this ``resolution process'' in \Cref{section:resolution}, and subsequently carry out the proof of \Cref{maintheorem:sufficient} in \Cref{section:proof}.\\

In preparation for the definition of a purely-unital vector, we recall that a {\em hyperbolic module} over a ring $R$ is a free module $H \cong R^{2n}$ equipped with a skew-Hermitian form $\pair{\cdot,\cdot}$ for which there is a basis $\{x_1, \dots, x_n, y_1, \dots, y_n\}$ such that $\pair{x_i,y_i} = 1$ and all other pairings of basis elements are $0$. Such a basis is called a {\em hyperbolic basis}. In this language, we can express \Cref{proposition:simplehomol} as saying that a sequence of simple stabilizations affects $H_1(X;\Z)$ by adding a direct summand with a hyperbolic $\Z[G]$-module.

\begin{definition}[Purely-unital vector]
Let $f^+: X^+ \to Y^+$ be a simple stabilization of $f: X \to Y$; let $x_1, y_1, \dots, x_k, y_k$ form an associated hyperbolic basis for $H_1(X^+;\Z)^{stab}$. An element $v \in H_1(X^+; \Z)^{stab}$ is {\em purely unital} if it admits an expression of the form
\[
v = \sum_{i = 1}^k (g_i x_i + h_i y_i)
\]
for elements $g_1, h_1, \dots, g_k, h_k \in \pm G \subset \Z[G]$, such that $g_i \ne \pm h_i$ for all indices $i$. 
\end{definition}
\begin{remark}
\label{remark:trivstab}
Note that a purely-unital vector $v$ necessarily has trivial stabilizer: $G_v =\pair{1}$. It follows that the coherence condition is vacuously satisfied for any purely-unital vector. Moreover, every purely-unital vector is also easily seen to be primitive.
\end{remark}

\begin{figure}[ht]
\labellist
\small
\pinlabel $\blue{g_{2i-1}}$ [tr] at 62.36 141.72
\pinlabel $\red{h_{2i-1}}$ [t] at 107.71 141.72
\pinlabel $\textcolor{mygreen}{g_{2i}}$ [tr] at 154.55 141.72
\pinlabel $\textcolor{myorange}{h_{2i}}$ [tl] at 209.74 141.72
\endlabellist
\includegraphics[scale=1]{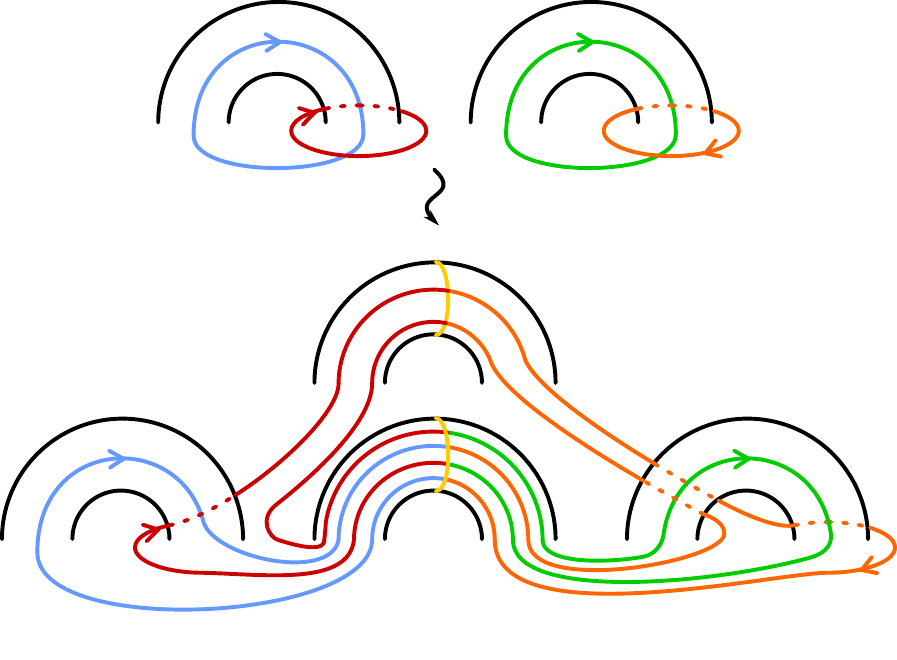}
\caption {Connecting the four components in a pair of handles into a single simple closed curve, as seen downstairs on $Y^+$. The colors of the strands indicate the sheets of $X^+$ containing the indicated strands, and are labeled in the top portion of the figure.
The two middle handles on the bottom are added in the stabilization process. The top is a basic $h_{2i}^{-1}h_{2i-1}$ handle, and the bottom is a basic $h_{2i}^{-1}g_{2i-1}$-handle.}
\label{figure:unitalresolution}
\end{figure}

\begin{lemma}\label{lemma:purestable}
Let $f^+: X^+ \to Y^+$ be a simple stabilization of $f: X \to Y$, and let $v \in H_1(X^+; \Z)^{stab}$ be purely unital and satisfy the necessary conditions for relative geometricity of \Cref{maintheorem:necessary}. Then $v$ is relatively geometric.
\end{lemma}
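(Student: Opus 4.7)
The plan is to realize $v$ by an explicit simple closed curve on a further stabilization $Y^{++}$ of $Y^+$. Since $f^+$ is simple, \Cref{proposition:simplehomol} provides a hyperbolic basis $\{x_i, y_i\}$ of $H_1(X^+;\Z)^{stab}$ whose elements are represented by the standard lifts of the generating curves $\xi_i, \eta_i$ of the $i$-th stabilizing handle $H_i$. Simplicity of the stabilization means each $H_i$ has preimage a disjoint union of $\abs{G}$ copies indexed by sheets, so each monomial $\varepsilon g \cdot x_i$ (with $\varepsilon \in \{\pm 1\}$ and $g \in G$) is represented by an oriented simple closed curve sitting in the $g$-labeled copy of $H_i$, and similarly for $\eta_i$. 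The purely-unital hypothesis $g_i \ne \pm h_i$ ensures that the representatives of $g_i x_i$ and $h_i y_i$ live in \emph{distinct} copies of $H_i$'s preimage, hence are disjoint; curves in different handles are automatically disjoint. This yields a disjoint union of $2k$ oriented simple closed curves $\tilde\Gamma_0 \subset X^+$ representing $v$, projecting to a disjoint union of $2k$ simple closed curves $\Gamma_0 \subset Y^+$.

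Next I would further stabilize $Y^+$ to $Y^{++}$ by pairing consecutive handles $(H_{2i-1}, H_{2i})$ and attaching two bridging basic handles per pair as in Figure~\ref{figure:unitalresolution}: a basic $h_{2i}^{-1} h_{2i-1}$-handle and a basic $h_{2i}^{-1} g_{2i-1}$-handle. By \Cref{def:basicg} the lift of the generator $\xi$ of a basic $g$-handle runs from any sheet $s$ to the sheet $sg$, so these bridging handles supply properly embedded arcs in $X^{++}$ linking the $h_{2i-1}$- and $h_{2i}$-sheets, and the $g_{2i-1}$- and $h_{2i}$-sheets, respectively. Band-summing $\Gamma_0$ along the projected arcs in the pattern of Figure~\ref{figure:unitalresolution} converts the four disjoint components within each pair of handles into a single simple closed curve downstairs. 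The outputs of consecutive pairs (and the leftover handle if $k$ is odd, bridged internally by a single basic $h_k^{-1} g_k$-handle, which is possible since $g_k \ne \pm h_k$) can then be joined by further basic $1$-handles, producing a single simple closed curve $\delta \subset Y^{++}$.

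What remains is to confirm that the canonical elevation $\tilde\delta \subset X^{++}$ satisfies $[\tilde\delta] = v$ in $H_1(X^{++};\Z)$, and this is the step I expect to be the main obstacle. Because the bridging handles are genuine basic $g$-handles rather than simple stabilizations, \Cref{proposition:simplehomol} does not immediately describe their effect on homology, and one must verify directly that the specific bands used to form the connect-sums travel trivially with respect to the new hyperbolic summands contributed by the bridging handles --- the specific choices of group elements $h_{2i}^{-1}h_{2i-1}$ and $h_{2i}^{-1}g_{2i-1}$ are made precisely so that the bridging arcs close up into loops with no extra winding around the new $\xi'$- or $\eta'$-cycles. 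Once this bookkeeping is carried out, simplicity of $\delta$ is visible from the construction, connectedness of $\tilde\delta$ follows from $G_v = \pair{1}$ (\Cref{remark:trivstab}), and primitivity and coherence are automatic by that same remark, so the necessary conditions of \Cref{maintheorem:necessary} are realized geometrically on $f^{++}: X^{++} \to Y^{++}$.
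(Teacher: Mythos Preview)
Your proposal has a genuine gap: you never invoke the isotropy and parity hypotheses, and without them the construction cannot work.

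In the paper's argument, the computation $\pair{v,v} = \sum_i (g_i h_i^{-1} - h_i g_i^{-1})$ together with $q(v)=0$ is used to deduce the pairing relation \eqref{equation:pairs}: after reindexing, $k$ is even and $g_{2i-1}h_{2i-1}^{-1} = (g_{2i}h_{2i}^{-1})^{-1}$ for each $i$. The construction in \Cref{figure:unitalresolution} is tailored to precisely this situation; the two bridging handles with labels $h_{2i}^{-1}h_{2i-1}$ and $h_{2i}^{-1}g_{2i-1}$ are chosen so that, \emph{under this relation}, the four strands close up into a single lift carrying no excess homology from the new handles. You pair ``consecutive'' handles $(H_{2i-1},H_{2i})$ without ever establishing this relation, so there is no reason the figure's recipe applies.

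The clearest symptom is your treatment of a leftover handle when $k$ is odd. You propose bridging $g_k x_k$ and $h_k y_k$ by a single basic $h_k^{-1}g_k$-handle; if that really produced a relatively-geometric representative of $g_k x_k + h_k y_k$, it would do so for \emph{every} such pair with $g_k\neq\pm h_k$. But $\pair{g_kx_k+h_ky_k,\,g_kx_k+h_ky_k} = g_kh_k^{-1} - h_kg_k^{-1}$ is generically nonzero, contradicting the necessity of isotropy (\Cref{lemma:isotropic}). So your bridging arc \emph{must} pick up nontrivial homology from the new handle in general; the assertion that ``the bridging arcs close up into loops with no extra winding'' is exactly what fails, and it is the isotropy/parity hypotheses that rule out the bad cases. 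You need to first use those hypotheses to obtain the matching \eqref{equation:pairs}, and only then appeal to \Cref{figure:unitalresolution}.
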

\begin{proof}
As noted in \Cref{remark:trivstab}, purely-unital vectors are coherent and primitive. Thus the relevant hypotheses are that $v$ is isotropic and even. Computing,
\[
\pair{v,v} = \pair{\sum_{i = 1}^k (g_i x_i + h_i y_i),\sum_{i = 1}^k (g_i x_i + h_i y_i)} =  \sum_{i= 1}^k (g_i h_i^{-1} - h_i g_i^{-1})
\]
and, applying \Cref{lemma:qprops},
\[
q(v) = q\left(\sum_{i = 1}^k (g_i x_i + h_i y_i)\right) = \sum_{i = 1}^k \sum_{g \in G_2^*} (g_i x_i, g h_i y_i) g = \sum_{g \in G_2^*} N(g) g,
\]
with $N(g) \in \Z/2\Z$ the mod-$2$ count of the number of indices $i$ for which $g = g_i h_i^{-1}$. 

Thus if $v$ satisfies the necessary conditions of \Cref{maintheorem:necessary}, the following conditions are satisfied by the elements $g_i, h_i \in \pm G \cup \{0\}$:
\begin{align}
    \sum_{i= 1}^k (g_i h_i^{-1} - h_i g_i^{-1}) = 0 \label{condition:non2} \\ 
    \mbox{Each $g \in G_2^*$ appears as an {\em even} number of $g_i h_i^{-1}$.}\label{condition:2}
\end{align}
Note that \eqref{condition:non2} can be re-formulated as follows:
\begin{align} \label{condition:2prime}
    \mbox{Each $g \in G \setminus G_2$ appears as $g_i h_i^{-1}$ for exactly as many indices as does $g^{-1}$.}
\end{align}
Note then that \eqref{condition:2}, \eqref{condition:2prime} together imply that, after re-labeling the indices, the handles can be paired: {\em $k = 2k'$ and for all $1 \le i \le k'$, the equalities}
\begin{equation}\label{equation:pairs}
g_{2i-1} h_{2i-1}^{-1} = (g_{2i} h_{2i}^{-1})^{-1} 
\end{equation}
{\em hold.}

To realize such $v$ relatively-geometrically, we begin by realizing $v$ as a cycle in $X^+$ with non-simple projection onto $Y^+$. Specifically, represent $v$ by the cycle
\[
\gamma_0 = \sum_{i = 1}^k g_i \xi_i + h_i \eta_i,
\]
where $\xi_i, \eta_i$ generate the homology of the $i^{th}$ stable handle. To complete the argument we will show how to replace $\gamma_0$ with a homologous cycle $\gamma$ that is connected and for which $f^+(\gamma)$ is simple.

We resolve the crossings of $f^+(\gamma_0)$ first. The image $f^+(\gamma_0)$ has exactly $k$ transverse double points, one for each of the $k$ handles. By \eqref{equation:pairs}, these come in $k'$ pairs. \Cref{figure:unitalresolution} shows how, after stabilizing, the four-component cycle $g_{2i-1} \xi_{2i-1} + h_{2i-1}\eta_{2i-1} + g_{2i} \xi_{2i} + h_{2i}\eta_{2i}$ can be replaced with the homologous simple closed curve $\omega_i$ such that $f^+(\omega_i)$ is simple. 

We define the multicurve $\gamma_1$ as
\[
\gamma_1 = \sum_{i = 1}^{k'} \omega_{i}.
\]
To construct $\gamma$ from $\gamma_1$, we connect the $k'$ components of $\gamma_1$ together. For $i = 1, \dots, k'-1$, let $\alpha_i \subset Y^+$ be an arc beginning at $f^+(\omega_i)$, ending at $f^+(\omega_{i+1})$, and otherwise disjoint from all $\{\omega_j\}$ and other arcs $\{\alpha_j\}$. Performing additional stabilizations if necessary, we can construct a set of such arcs such that the lift of $\alpha_i$ beginning at $\omega_i$ ends at $\omega_{i+1}$. The iterated connect-sum of the curves $\{\omega_j\}$ along these distinguished lifts is then a connected simple closed curve $\gamma$ that represents the homology class $v$ and such that the projection $f^+(\gamma)$ is simple.
\end{proof}

\section{The resolution process}\label{section:resolution}
The resolution process takes as input a multicurve $\delta \subset X$ for which $f(\delta)$ has a self-crossing, and returns a simple stabilization $X^+$ of $X$ and a new multicurve $\delta' \subset X^+$ with one fewer self-crossing, but homology class altered by some purely-unital vector. We begin by giving the formal definition (\Cref{definition:resolution}), and then analyze the effect on homology in \Cref{lemma:resolution1,lemma:resolution}.

\begin{definition}[Resolution process]\label{definition:resolution}
Let $\delta\subset X$ be a multicurve with finite cyclic stabilizer group $G_\delta$. Suppose that $f(\delta)$ has a transverse self-crossing at $p \in Y$. Let $\alpha \subset Y$ be an arc connecting $p$ to $\Delta_0$. The {\em resolution of $\delta$ along $\alpha$} is the multicurve $\delta'$ on the surface $X^+$ obtained from $X$ by adding a basic $1$-handle $H \subset Y^+$ constructed as depicted in \Cref{figure:resolution}. 
\end{definition}

\begin{remark}
\label{remark:components}
Note that applying the resolution process to some multicurve $\delta$ does not change the number of components.
\end{remark}

\begin{figure}[ht]
\labellist
\small
\pinlabel $\alpha$ at 110 35
\endlabellist
\includegraphics[scale=0.575]{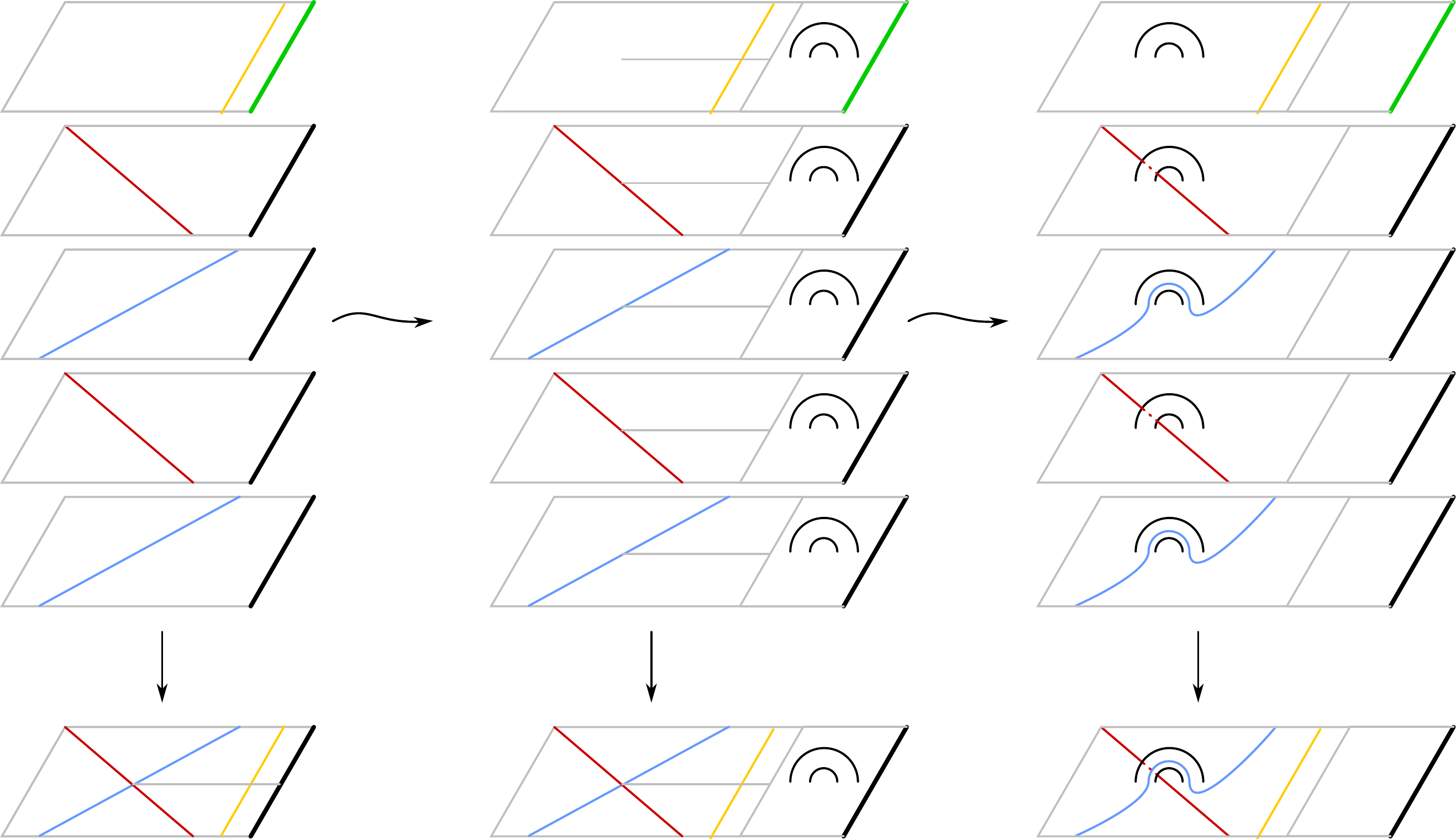}
\caption {The resolution process. Push the handle $H$ along $\alpha$ into the interior of $Y$. Whenever $\alpha$ crosses over a strand of $f(\delta)$, change the isotopy class of $\delta$ by $G_\delta$-equivariantly moving each local branch across the corresponding boundary component of $f^{-1}(H)$. When $H$ reaches $p$, resolve the crossing by routing one $G_\delta$-orbit of local branches around the core of the corresponding handle above $H$, and the other orbit around the co-core. (Alternatively, one can imagine carrying out this process by {\em pushing} the point $p$ out to $H$ while dragging transverse crossings along in front.)}
\label{figure:resolution}
\end{figure}

How does the resolution process alter the homology class $[\delta]$? The answer turns out to depend on the choice of path $\alpha$, and is presented in \Cref{lemma:resolution1}. In \Cref{lemma:resolution}, we present a complementary result that asserts that it is possible to {\em choose} an arc so that the change in homology under the resolution process can be controlled. In preparation, we record the following lemma, which follows from the basic principles of covering space theory and the discussion of the local sheet index ca. \Cref{definition:localindex}.
\begin{lemma}\label{lemma:arccoset}
Let $\delta, p, \alpha$ be as above. Then $\alpha$ provides an identification of the local branches of $\delta$ above $p$ with right cosets $G_\delta g_1$ and $G_\delta g_2$ as follows: lift $\alpha$ with basepoint the chosen local branch; $\alpha$ then ends at some component of $f^{-1}(\Delta_0)$, and, having chosen a distinguished component $\tilde \Delta_0 \subset f^{-1}(\Delta_0)$, these components are identified with $G$. 
\end{lemma}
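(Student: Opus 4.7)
The plan is to unpack the identification $f^{-1}(p) \cong G$ that the arc $\alpha$ determines, and then read off the $G_\delta$-orbit structure on the local branches of $\delta$ above $p$. Essentially this is a bookkeeping exercise: the substantive work has already been done in establishing the basepoint conventions and the notion of the local sheet index in Definition~\ref{definition:localindex}; what remains is to verify that the procedure described by the lemma genuinely gives a well-defined identification with right cosets of the prescribed form.

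First I would invoke path-lifting: starting from any $q \in f^{-1}(p)$, the unique lift of $\alpha$ beginning at $q$ ends at a well-defined point $\Phi_\alpha(q) \in f^{-1}(\Delta_0)$, and uniqueness of lifts together with the fact that deck transformations carry lifts to lifts makes $\Phi_\alpha$ a $G$-equivariant bijection with respect to the left $G$-action on both fibers. Next, I would combine this with the standing basepoint convention: since $\phi(\Delta_0) = 1$, the fiber $f^{-1}(\Delta_0)$ is a free transitive left $G$-set, and the chosen distinguished elevation $\tilde \Delta_0$ produces a $G$-equivariant identification $f^{-1}(\Delta_0) \cong G$ in which $G$ acts on itself by left translation. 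Composing with $\Phi_\alpha$ yields the identification $f^{-1}(p) \cong G$ promised by the lemma.

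Finally I would read off the local branches. Each of the two local branches of $f(\delta)$ at the transverse double point $p$ is covered by a single $G_\delta$-orbit of local branches of $\delta$, by the discussion immediately preceding Definition~\ref{definition:localindex}. Under the identification above, such a $G_\delta$-orbit is a subset of $G$ invariant under left multiplication by $G_\delta$, hence equal to a right coset $G_\delta g_i$ for some $g_i \in G$. This produces the two right cosets $G_\delta g_1$ and $G_\delta g_2$ claimed by the lemma.

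The only point requiring genuine care (and the main place one could go wrong) is the left/right distinction: because the deck group acts on fibers from the left and the identification with $G$ is by left translation, $G_\delta$-orbits must be recorded as right cosets $G_\delta g_i$ rather than left cosets. Beyond this, I do not expect any substantive obstacle; the lemma is essentially a careful unwinding of the basepoint conventions.
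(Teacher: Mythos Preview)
Your proposal is correct and matches the paper's approach: the paper does not give an explicit proof, stating only that the lemma ``follows from the basic principles of covering space theory and the discussion of the local sheet index ca.\ \Cref{definition:localindex}.'' Your write-up is a faithful and careful unpacking of exactly this, with the appropriate attention to the left/right coset distinction.
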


\begin{lemma}\label{lemma:resolution1}
Let $\delta\subset X$ be given with finite cyclic stabilizer group $G_\delta \leqslant G$. Suppose that $\delta$ has a transverse self-crossing above $p \in Y$ of local sheet index $i(\delta, \delta, p) = G_\delta g G_\delta$. Let $\alpha \subset Y$ be an arc connecting $p$ to $\Delta_0$, and suppose that, via \Cref{lemma:arccoset}, $\alpha$ identifies the local branches with the cosets $G_\delta g_1$ and $G_\delta g_2$. Then the resolution $\delta'$ of $\delta$ along $\alpha$ has homology class
\begin{equation}\label{equation:firstdeltaprime}
[\delta'] = [\delta] + \zeta_\delta (g_1 x + g_2 y).
\end{equation}
Here $x, y$ generate the homology of the basic $1$-handle, as in \Cref{proposition:simplehomol}.
\end{lemma}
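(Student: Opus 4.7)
The plan is to isolate the contribution of each geometric step of the resolution process and determine where the change in homology class $[\delta']-[\delta]$ is actually born. First I would observe that pushing the basic $1$-handle $H$ into the interior along $\alpha$, together with the finger-moves that $G_\delta$-equivariantly transport local branches of $\delta$ across the corresponding boundary components of $f^{-1}(H)$, amounts to an ambient isotopy of the multicurve inside $X^+$. Since isotopies preserve the homology class, these preliminary steps may be ignored, and the entire change in homology is produced by the final step, where the X-crossing at $p$ is opened up and the two local branches are rerouted through the corresponding lifted handles upstairs.

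Next, I would appeal to \Cref{proposition:simplehomol}, which decomposes
\[
H_1(X^+;\Z)\cong H_1(X;\Z)\oplus \Z[G]\pair{x,y}
\]
and furnishes distinguished $\Z[G]$-generators $x,y$ for the stable summand. Since $\delta$ and $\delta'$ can be arranged to agree away from a neighborhood of $f^{-1}(H)$ together with the pushed arc, the symmetric-difference cycle is supported inside $f^{-1}(H)$, so $[\delta']-[\delta]$ must lie in the stable summand $\Z[G]\pair{x,y}$. It remains only to identify its precise coefficients.

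The core of the argument is then a $G_\delta$-equivariant local count. By \Cref{lemma:arccoset}, the lift of $\alpha$ based at one local branch of $\delta$ above $p$ terminates at the elevation $g_1\tilde\Delta_0$, so the $G_\delta$-orbit of that branch occupies precisely the sheets $G_\delta g_1$ above $p$; analogously for $g_2$. In the resolution, the entire $G_\delta$-orbit of the first branch is rerouted around the cores of the lifted handles above $H$, and the orbit of the second branch is rerouted around the co-cores. The lifted handle lying above sheet $hg_1$ is the $hg_1$-translate of the distinguished handle, so its core represents $hg_1\cdot x$; likewise the co-core above sheet $hg_2$ represents $hg_2\cdot y$. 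Summing the resulting small difference loops gives
\[
[\delta']-[\delta]=\sum_{h\in G_\delta}\bigl(hg_1\,x+hg_2\,y\bigr)=\zeta_\delta(g_1 x+g_2 y),
\]
as required.

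The main obstacle I anticipate is bookkeeping of conventions rather than substantive topology: one must verify that the identification ``first branch $\leftrightarrow$ $g_1$, core $\leftrightarrow$ $x$'' and ``second branch $\leftrightarrow$ $g_2$, co-core $\leftrightarrow$ $y$'' in the resolution picture of \Cref{figure:resolution} is consistent with the basepoint choices used to define $x,y$ in \Cref{proposition:simplehomol}, and that the $G$-indexing on sheets above $\Delta_0$ transported along $\alpha$ matches the $G$-indexing on lifts of $H$. Once these conventions are synchronized, the equivariant local computation above produces the asserted identity without further work.
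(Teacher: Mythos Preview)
Your proof is correct and follows the same two-step structure as the paper: argue that the preliminary handle-pushing and strand-transport moves preserve the homology class, then compute the contribution of the final crossing-resolution as $\zeta_\delta(g_1x+g_2y)$ via the sheet labels induced by $\alpha$. The only difference is cosmetic: you justify the first step by viewing it as an ambient isotopy of the multicurve (which matches the paper's own alternative description of the process as ``pushing the point $p$ out to $H$''), whereas the paper's written proof argues instead that each strand-transport across a handle boundary component adds the class of that boundary, which vanishes.
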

\begin{proof}
There are two types of modifications made in transforming $\delta$ into $\delta'$: passing a strand across a boundary component of $f^{-1}(H)$, and resolving the crossing. We analyze each in turn.

We claim that when a component of $f^{-1}(H)$ crosses over a strand of $\delta$, the homology class is unchanged. This follows because the effect on homology is to add a copy of the class of the boundary component, but the homology class of a boundary is zero.

When the crossing is resolved, \Cref{figure:resolution} shows that the effect on homology is to add a class of the form $\zeta_\delta (f_1 x + f_2 y)$ for some $f_1, f_2 \in G$. The values $f_1, f_2$ are determined by the conventions by which we put coordinates on the stable handle, as in \Cref{proposition:simplehomol}. The homology class $x$ is by definition the elevation based on the distinguished component $\tilde \Delta_0$. Under $\alpha$, the local branches are located in the sheets corresponding to cosets $G_\delta g_1$ and $G_\delta g_2$, and hence the added classes are $\zeta_\delta g_1 x$ and $\zeta_\delta g_2 y$ as claimed.
\end{proof}

\begin{lemma}\label{lemma:resolution}
Let $\delta\subset X$ be given with finite cyclic stabilizer group $G_\delta\leqslant G$. Suppose that $\delta$ has a transverse self-crossing above $p \in Y$ of local sheet index $i(\delta, \delta, p) = G_\delta g G_\delta$. Then for any $g' \in G_\delta g G_\delta$, there is an arc $\alpha$ such that the resolution $\delta'$ of $\delta$ along $\alpha$ has homology class
\begin{equation}\label{equation:deltaprime}
[\delta'] = [\delta] + \zeta_\delta (x + g' y).
\end{equation}
\end{lemma}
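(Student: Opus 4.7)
The plan is to start from Lemma \ref{lemma:resolution1}, which supplies the formula $[\delta'] = [\delta] + \zeta_\delta(g_1 x + g_2 y)$, where $(g_1, g_2)$ are the labels of the two local branches at $p$ as tracked by the arc. The goal reduces to arranging $g_1 \in G_\delta$ and $g_2 \in G_\delta g'$, because the idempotence $\zeta_\delta c = \zeta_\delta$ for $c \in G_\delta$ then gives $\zeta_\delta(g_1 x + g_2 y) = \zeta_\delta(x + g'y)$ exactly.

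The freedom available is two-fold. First, the construction in \Cref{definition:resolution} leaves a binary choice of which $G_\delta$-orbit of local branches is routed around the core of the new handle (contributing to the $x$-coefficient) and which is routed around the co-core (contributing to the $y$-coefficient); swapping this choice exchanges the roles of $g_1$ and $g_2$. Second, any arc $\alpha$ can be modified by concatenation at its endpoint with a loop $\eta$ based at $*$, and since $\phi$ is surjective, this allows the endpoint identification---and hence both labels simultaneously---to be right-translated by any prescribed $h \in G$. (A minor technical point: the concatenated path may fail to be embedded, but small perturbations fix this without affecting the homological outcome, because the proof of Lemma \ref{lemma:resolution1} already handles crossings of the auxiliary arc with $\delta$ as nullhomologous boundary contributions.)

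Starting from any arc $\alpha_0$ with labels $(g_{1,0}, g_{2,0})$, I would then show that after swapping the two orbits and right-translating by some $h \in G$ to obtain labels $(g_{2,0}h, g_{1,0}h)$, both constraints can be simultaneously satisfied. Eliminating $h$, the existence of an appropriate $h$ is equivalent to the condition $g_{1,0} g_{2,0}^{-1} \in G_\delta g' G_\delta$. Since the local sheet index is $G_\delta g G_\delta$ and by hypothesis $g' \in G_\delta g G_\delta$, we have $G_\delta g' G_\delta = G_\delta g G_\delta$ and the condition holds automatically. Explicitly, writing $g_{1,0} g_{2,0}^{-1} = c_b g' c_a^{-1}$ with $c_a, c_b \in G_\delta$ and setting $h = g_{2,0}^{-1} c_a$ yields $(g_1, g_2) = (c_a, c_b g')$, which is exactly what is needed.

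The central point I expect to be mildly subtle is the interplay between the two sources of freedom. Without the branch swap, the analogous computation would force $g_{1,0} g_{2,0}^{-1} \in G_\delta g'^{-1} G_\delta$, i.e. $g'^{-1} \in G_\delta g G_\delta$, which in general fails. The swap is therefore essential, and captures the symmetry $g \sim g^{-1}$ inherent in any self-crossing (compare \Cref{corollary:selfint}, where each self-crossing contributes both $g$ and $g^{-1}$).
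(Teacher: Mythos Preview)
Your argument is correct and follows essentially the same route as the paper's proof: start from an arbitrary arc, right-multiply by a loop to translate both coset labels by any desired $h\in G$, and then observe that the double-coset hypothesis $g'\in G_\delta g G_\delta$ is exactly what allows you to solve for $h$. The one place you are more explicit than the paper is in isolating the binary ``swap'' freedom (which branch is routed to the core versus the co-core of the handle); the paper absorbs this choice silently into the unlabelled listing of the two cosets in \Cref{lemma:resolution1}, but as you correctly compute, one of the two routings forces $g'^{-1}\in G_\delta g G_\delta$ rather than $g'\in G_\delta g G_\delta$, so the freedom really is being used.
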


\begin{proof}
Let $\alpha' \subset Y$ be an arbitrary arc connecting $p$ to $\Delta_0 \subset \partial Y$. Via \Cref{lemma:arccoset}, this identifies the local branches of $\delta$ above $p$ with cosets $G_\delta g_1$ and $G_\delta g_2$. If $\beta \in \pi_1(Y)$ is a loop based at the endpoint of $\alpha'$ on $\Delta_0$, the concatenated path $\alpha' \beta$ identifies the local branches with the cosets $G_\delta g_1 \phi(\beta)$ and $G_\delta g_2 \phi(\beta)$.

With the notation already established, the local sheet index is $G_\delta g_1 g_2^{-1} G_\delta$. Thus, there exist $h_1, h_2 \in G_\delta$ such that
\[
g' = h_1 g_1 g_2^{-1} h_2.
\]
Choose coset representatives $g_1 \in G_\delta g_1$ and $g_2 \in G_\delta g_2$, and then choose $\beta$ so that $\phi(\beta) = g_2^{-1} h_2$. Using the concatenation $\alpha:= \alpha' \beta$ to identify local branches with right cosets of $G_\delta$ then gives $G_\delta g'$ and $G_\delta$. When the crossing is resolved using $\alpha$, \Cref{lemma:resolution1} shows that the added homology class is of the form $\zeta_\delta(x + g' y)$ as claimed.
\end{proof}

\section{Proof of \Cref{maintheorem:sufficient}}\label{section:proof}

We come to the final stage of the proof of \Cref{maintheorem:sufficient}. The outline is as follows. We show in \Cref{lemma:coherenceconnected} that when $v$ is coherent, it admits a representative as a {\em connected} $G_v$-invariant simple closed curve; the objective is then to replace this representative with one which moreover has simple projection onto (a stabilization of) $Y$. For this, we carry out the resolution process of \Cref{section:resolution}, producing a {\em relatively geometric} class in the homology class $v + \zeta_v w$, where $w$ is a purely-unital vector. As explored in \Cref{lemma:resolution1,lemma:resolution}, there is some freedom in constructing $w$; we show in \Cref{lemma:choosereps} that the parity and isotropy assumptions on $v$ can be leveraged to construct $w$ so as to be relatively geometric (following the work of \Cref{lemma:purestable}). Finally, we use the relative geometricity of $w$ and of $v + \zeta_v w$ (and the primitivity hypotheses on $v$) in a Dehn twist construction which sends the relatively geometric class $v + \zeta_v w$ via a $G$-equivariant diffeomorphism onto the class $v$, as desired.

\subsection{Preparatory lemmas}
Here, we collect the preliminary results (\Cref{lemma:coherenceconnected,lemma:choosereps,lemma:dehntwist,lemma:betteru}) alluded to in the above outline.

\begin{lemma}
\label{lemma:coherenceconnected}
Let $v \in H_1(X;\Z)$ have cyclic stabilizer subgroup $G_v$ of finite order $d$, and suppose that $v$ is coherent. Then $v$ admits a representative $\gamma$ as a $G_v$-invariant simple closed curve.
\end{lemma}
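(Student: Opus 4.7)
The plan is to reduce the question to the intermediate cover $X_v = X/G_v$ and invoke the classical fact that a nonzero integrally primitive homology class on a connected oriented surface with boundary is represented by a connected simple closed curve. Coherence is precisely the hypothesis that lets this reduction close up.

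First, set $\bar v := f_{v,*}(v)/d \in H_1(X_v;\Z)$, which exists by the first half of coherence. The key algebraic step is the identity $v = \tau(\bar v)$, where $\tau : H_1(X_v;\Z) \to H_1(X;\Z)^{G_v}$ is the transfer map appearing in the proof of \Cref{lemma:eqvtrep}. Defined on chains by summing over lifts, $\tau$ satisfies the standard formula $\tau \circ f_{v,*}(w) = \sum_{g \in G_v} g \cdot w$; restricted to $G_v$-invariants this is multiplication by $d$. Applying this to $w := v - \tau(\bar v) \in H_1(X;\Z)^{G_v}$, we have $f_{v,*}(w) = d\bar v - d\bar v = 0$, hence $dw = \tau(f_{v,*}(w)) = 0$. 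Because $H_1(X;\Z)$ is torsion-free (as $X$ is a surface with boundary), this forces $w = 0$, i.e.\ $v = \tau(\bar v)$.

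Next, I would decompose $\bar v = k\bar v'$ with $\bar v' \in H_1(X_v;\Z)$ integrally primitive and $k \geq 1$. The second half of coherence says $\phi_{v,*}(\bar v) = k\,\phi_{v,*}(\bar v')$ generates $G_v \cong \Z/d$; since a product $km$ generates $\Z/d$ only when $\gcd(k,d) = 1$ and $m$ already generates $\Z/d$, both $\gcd(k,d)=1$ and $\phi_{v,*}(\bar v')$ generating $G_v$ follow. The classical theorem then produces a connected simple closed curve $\bar\gamma \subset X_v$ with $[\bar\gamma] = \bar v'$. Because $G_v$ is abelian, $\phi_v$ factors through $\phi_{v,*}$, so the monodromy $\phi_v(\bar\gamma_\bullet)$ equals $\phi_{v,*}(\bar v')$, which generates $G_v$. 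Covering-space theory then guarantees that $\gamma := f_v^{-1}(\bar\gamma) \subset X$ is a single connected $G_v$-invariant simple closed curve, with class $\tau([\bar\gamma]) = \tau(\bar v')$. Assembling, $v = \tau(\bar v) = k\tau(\bar v') = k[\gamma]$, so $\gamma$ weighted by $k$ is the desired $G_v$-invariant simple-closed-curve representative of $v$.

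The main obstacle is the identity $v = \tau(\bar v)$: coherence is exactly what is needed both to make $\bar v$ well-defined and then to force the transfer to land on the nose at $v$. Once it is in hand, the rest is a bookkeeping combination of the classical SCC representation theorem with the monodromy dictionary between subcovers and their classifying homomorphisms.
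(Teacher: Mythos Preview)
Your argument is correct and follows essentially the same route as the paper: reduce via the transfer map to the intermediate quotient $X_v$, represent the pushed-down class by a simple closed curve there, and use coherence to ensure the preimage in $X$ is connected. Your treatment is in fact a bit more careful---you prove $v=\tau(\bar v)$ explicitly and track the possible weight $k$ (with $\gcd(k,d)=1$), whereas the paper tacitly assumes integral primitivity of $v$ (which does hold in the intended application via the primitivity condition of \Cref{maintheorem:necessary}).
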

\begin{proof}
We first recall the terminology of \Cref{definition:coherent}: we consider the cyclic covering 
\[
f_v: X \to X_v
\]
and the classifying map $\phi_{v,*}: H_1(X_v;\Z) \to G_v$. 

By the theory of the transfer map, every class in $H_1(X;\Z)^{G_v}$ admits a representative of the form
\[
v = [f_v^{-1}(\gamma)]
\]
for some multicurve $\gamma \subset Y$, and if $v$ is integrally primitive (i.e. if $v = k v'$ for some $v' \in H_1(X;\Z)$, then $k = \pm 1$), then $\gamma$ can be taken to be a simple closed curve. The number of components of $f_v^{-1}(\gamma)$ is then equal to the index 
\[
i_v = [G_v: \pair{\phi_{v,*}([\gamma])}].
\]
Again by the theory of the transfer map, $f_{v,*}(v) = d [\gamma]$. The hypothesis that $v$ is coherent then implies that $i_v = 1$, so that $f_v^{-1}(\gamma)$ is a connected $G_v$-invariant simple closed curve representing $v$, as was to be shown.
\end{proof}

The following lemma will allow us to leverage the assumption that $v$ is isotropic and even in order to construct a purely-unital vector $w$ that is relatively geometric.
\begin{lemma}
\label{lemma:choosereps}
Let $G_vg_1 G_v, \dots, G_v g_k G_v$ be double cosets such that the associated sum of local crossing factors vanishes:
\[
\sum_{i = 1}^k \zeta_v(g_i - g_i^{-1}) \zeta_v = 0.
\]
Then there are representatives $g_i' \in G_v g_i G_v$ for which 
\[
\sum_{i = 1}^k (g_i' - g_i'^{-1}) = 0.
\]
If, moreover, for each $g \in G_2^*$, the number of double cosets $G_vg_iG_v$ that equal $G_v g G_v$ is even, then the representatives $g_i'$ can be chosen such that each $g \in G_2^*$ appears as $g_i'$ for an {\em even} number of indices $1 \le i \le k$. 
\end{lemma}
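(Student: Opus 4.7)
The plan is to reorganize the hypothesis by double coset, extract a combinatorial symmetry, and then construct the $g_i'$ by an explicit pairing argument.

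First, I would observe that $\zeta_v h \zeta_v \in \Z[G]$ depends only on the double coset $D = G_v h G_v$---a substitution $h \mapsto a h b$ with $a, b \in G_v$ is absorbed by the flanking $\zeta_v$'s---and that its support is exactly $D$. Consequently the elements $\{\zeta_v g_D \zeta_v\}$ attached to distinct double cosets are $\Z$-linearly independent in $\Z[G]$. Setting $n_D := |\{i : G_v g_i G_v = D\}|$, the hypothesis rewrites as
\[
\sum_D \bigl(n_D - n_{D^{-1}}\bigr)\, \zeta_v g_D \zeta_v = 0,
\]
so linear independence forces $n_D = n_{D^{-1}}$ whenever $D \ne D^{-1}$; self-inverse double cosets contribute the vacuous identity $n_D = n_D$.

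Second, I would construct the representatives $g_i'$ by pairing indices. For each unordered pair $\{D, D^{-1}\}$ with $D \ne D^{-1}$, the equality $n_D = n_{D^{-1}}$ lets me bijectively match the indices $i$ with $G_v g_i G_v = D$ to the indices $j$ with $G_v g_j G_v = D^{-1}$; I pick $g_i' \in D$ arbitrarily and set $g_j' := (g_i')^{-1} \in D^{-1}$, so the pair contributes $0$ to $\sum(g_i' - g_i'^{-1})$. For each self-inverse double coset $D = D^{-1}$, I pair the indices within $D$ and, for each pair $(i_1, i_2)$, assign $g_{i_1}' = g \in D$ and $g_{i_2}' = g^{-1} \in D$, again contributing $0$. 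The main obstacle I anticipate is the residual case of self-inverse $D$ with $n_D$ odd: the leftover index would need $g_i' = g_i'^{-1}$, i.e.\ an involution inside $D$. Handling this will require additional structural input---either the existence of an appropriate involution in $D$ arising from the $G_v$-symmetry of the geometric configuration underlying the $g_i$, or an argument that the relevant application never produces this configuration.

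Third, for the strengthened conclusion under the added parity hypothesis: since every $g \in G_2^*$ satisfies $G_v g G_v = G_v g^{-1} G_v$, each double coset containing an involution is automatically self-inverse, and the hypothesis declares its $n_D$ even---thereby eliminating the odd-parity difficulty of step two exactly in those cosets where an involutive choice would otherwise be forced. Furthermore, the pairing scheme I build produces even multiplicities of involutions automatically: pairing $(g, g^{-1})$ with $g$ an involution deposits two copies of $g$ into the multiset $\{g_i'\}$, while cross-coset pairings between $D$ and $D^{-1} \ne D$ never produce involutions (since an involution in $D$ would itself force $D = D^{-1}$). Hence each $g \in G_2^*$ appears among the $g_i'$ an even number of times, yielding the strengthened conclusion.
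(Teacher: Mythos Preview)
Your pairing approach is genuinely different from the paper's: the paper runs an iterative replacement scheme, repeatedly locating an element $f$ with positive coefficient $c_f$ in $\sum(g_i - g_i^{-1})$ together with some $f'$ in the same double coset with $c_{f'}<0$, swapping one $g_i=f$ for $f'$, and arguing that $\sum_g |c_g|$ strictly decreases. Your reformulation via $n_D = n_{D^{-1}}$ recovers exactly the same consequence of the hypothesis (the paper's $\sum_{f\in D}c_f=0$ is literally $n_D - n_{D^{-1}}$), so the two approaches extract the same algebraic content; yours makes the structure more transparent.

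You have correctly located the real obstacle---a self-inverse double coset $D=D^{-1}$ with $n_D$ odd---but your step three does not close it. The parity hypothesis forces $n_D$ even only for those $D$ that \emph{contain} an element of $G_2^*$; it is silent about self-inverse double cosets with no involution, and such cosets exist. Take $G=D_8=\langle r,s\mid r^4=s^2=1,\ srs=r^{-1}\rangle$, $G_v=\langle r^2\rangle$, $k=1$, $g_1=r$. Then $D=G_v r G_v=\{r,r^3\}=D^{-1}$ contains no involution, $\zeta_v(r-r^{-1})\zeta_v=(1+r^2)(r-r^3)(1+r^2)=0$, and the parity hypothesis is vacuous; yet neither choice $g_1'\in\{r,r^3\}$ satisfies $g_1'-(g_1')^{-1}=0$. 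So the gap you flagged is not an artifact of your method---it is a gap in the lemma itself as stated. The paper's iterative proof fails at the identical point: with $c_r=1$, $c_{r^3}=-1$, the only available swap $r\mapsto r^3=r^{-1}$ sends $c_r$ from $1$ to $-1$ and the process loops rather than terminates. Both arguments are missing the same ingredient, namely a reason (presumably geometric, coming from how the $g_i$ arise as local sheet indices in the application) that self-inverse involution-free double cosets cannot occur with odd multiplicity.
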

\begin{proof}
Choose arbitrary representatives $g_i \in G_v g_i G_v$, and write
\[
\sum_{i = 1}^k (g_i - g_i') = \sum_{g \in G} c_g g,
\]
where, by construction
\begin{equation}\label{equation:cg}
c_g = \mbox{($\#$ times $g = g_i$)} - \mbox{($\#$ times $g = g_i^{-1}$)}.
\end{equation}
One computes that then 
\[
\zeta_v\left( \sum_{i = 1}^k (g_i- g_i^{-1})\right) \zeta_v = \zeta_v\left( \sum_{g \in G} c_g g\right) \zeta_v= \sum_{g \in G} \left(\sum_{(h_1, h_2) \in G_v \times G_v} c_{h_1 g h_2}  \right) g.
\]
By hypothesis, it follows that each coefficient $\sum_{G_v \times G_v} c_{h_1 g h_2} = 0$, but this sum is just a nonzero multiple of the sum over a fixed double coset, and so for all double cosets $G_v g G_v$, 
\[
\sum_{f \in G_v g G_v} c_f = 0.
\]

If all coefficients $c_f, f \in G$ are zero, then take $g_i' = g_i$ and the claim holds. Otherwise, choose some $f \in G$ with $c_f \ne 0$. Since $\sum_{f' \in G_v f G_v} c_{f'} = 0$, we can choose $c_f >0$, and there moreover exists $f' \in G_v f G_v$ with $c_{f'} < 0$. By the definition of $c_f$ given in \eqref{equation:cg}, it follows that $f = g_i$ for strictly more indices $i$ than $f = g_i^{-1}$, and likewise that $f' = g_i$ for strictly fewer indices than $f' = g_i^{-1}$. Replace some $g_i = f$ by $g_i' = f'$; then the coefficients $c_f$ and $c_{f'}$ both decrease in absolute value. Repeat this process until all $c_f = 0$: the resulting set of representatives $\{g_i'\}$ satisfy
\[
\sum_{i =1}^k(g_i' - g_i'^{-1}) = 0.
\]

Now suppose additionally that for each $g \in G_2^*$, the number of double cosets $G_v g_i G_v$ that equal $G_v g G_v$ is even. We first note that this can be expressed more simply as the number of {\em appearances} of some element of $G_v g G_v$ among the ordered list $(g_i')$ of $g_i'$. Suppose that for some $g \in G_2^*$, the chosen list $(g_i')$ has $g_i' = g$ for an {\em odd} number of indices $i$. Since the total number of appearances of the coset $G_v g G_v$ in $(g_i')$ is even by hypothesis, it follows that there is some other element $g' \in G_v g G_v$ that appears an odd number of times in $(g_i')$.  

We claim that there is moreover some such $g'$ contained in $G_2^*$; we postpone the proof to the paragraph below. By exchanging one $g_i' = g$ for $g_i'' = g'$, the number of appearances of $g$ and $g'$ can be made simultaneously even. The sum $\sum_{i = 1}^k (g_i' - g_i'^{-1})$ is unaltered by exchanging some $g$ for $g'$, as both factors $g-g^{-1} = g' - g'^{-1} = 0$ vanish since $g,g' \in G_2^*$. One has therefore reduced the number of $g \in G_2^*$ for which $g$ appears in $(g_i')$ an odd number of times; repeat this step as many times as necessary.

To establish the claim, we suppose to the contrary that there is a {\em unique} element $g \in G_v g G_v \cap G_2^*$ that appears in $(g_i')$ an odd number of times. Since $\sum_{i = 1}^k (g_i' - g_i'^{-1}) = 0$, the remaining elements of $G_v g G_v$ that appear in $(g_i')$ come in {\em pairs} $g', g'^{-1} \ne g'$, each appearing the same number of times. Thus the total number of indices corresponding to elements of this type is {\em even}, and so the {\em total} number of appearances of some $g' \in G_v g G_v$ in $(g_i')$ must be odd, contrary to hypothesis.
\end{proof}

The following lemma is well-known; see, e.g. \cite[Section 3]{looijenga}. 
\begin{lemma}
\label{lemma:dehntwist}
Let $\gamma \subset Y$ be a simple closed curve, and let $\tilde \gamma$ be a chosen elevation. Denote the stabilizer of $v:= [\tilde \gamma]$ by $G_v$. Then the Dehn twist power $T_\gamma^{\abs{G_v}}$ lifts to a $G$-equivariant mapping class $\tilde{T_\gamma}$ on $X$, and the action on $H_1(X;\Z)$ is given by the formula
\[
\tilde{T_\gamma}(x) = x + \frac{\pair{x,v}}{\abs{G_v}}v.
\]
\end{lemma}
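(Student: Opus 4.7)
The plan is to prove the lift exists by restricting to an annular neighborhood of $\gamma$ and invoking the standard fact about Dehn twists on cyclic covers of annuli, then compute the homology action via Picard--Lefschetz, and finally massage the resulting sum into the desired closed form using the definition of $\pair{\cdot,\cdot}$.

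First, I would fix an annular neighborhood $A \subset Y$ of $\gamma$. The preimage $f^{-1}(A) \subset X$ decomposes as a disjoint union of components $\{\tilde A_i\}$, one per elevation of $\gamma$. Since the stabilizer of an elevation is the cyclic group $\pair{\phi(\gamma_\bullet)}$ of order $|G_v|$ (this is the content of \Cref{lemma:cyclicstab} and the discussion preceding it), each $\tilde A_i$ is an annulus covering $A$ as the $|G_v|$-fold connected cyclic cover. The classical fact for $n$-fold cyclic covers of annuli $\tilde A \to A$ is that the Dehn twist on $\tilde A$ projects to $T_A^n$, or equivalently, $T_A^n$ lifts to a single Dehn twist on $\tilde A$. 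Applying this componentwise, $T_\gamma^{|G_v|}$ lifts to the mapping class
\[
\tilde T_\gamma := \prod_{\tilde \gamma' \in f^{-1}(\gamma)} T_{\tilde \gamma'},
\]
where the product ranges over the (necessarily disjoint) elevations of $\gamma$. Because $G$ permutes the elevations and the construction above is natural under deck transformations, $\tilde T_\gamma$ commutes with the $G$-action, i.e.\ is $G$-equivariant.

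Next, I would compute the action on $H_1(X;\Z)$. Since the elevations are disjoint, the individual Dehn twists $T_{\tilde \gamma'}$ commute, and by the standard Picard--Lefschetz formula each acts on $x \in H_1(X;\Z)$ by $x \mapsto x + (x, [\tilde \gamma'])[\tilde \gamma']$. Indexing elevations by cosets $\bar g \in G/G_v$ via $[\tilde \gamma'] = \bar g v$, I obtain
\[
\tilde T_\gamma(x) = x + \sum_{\bar g \in G/G_v} (x, \bar g v)\, \bar g v.
\]

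Finally, I would rewrite the right-hand side using \Cref{def:relint}. Unpacking the definition of $\pair{x,v}$ and multiplying by $v$ gives
\[
\pair{x,v} v = \sum_{g \in G} (x, g v)\, g v.
\]
Since the class $gv \in H_1(X;\Z)$ depends only on the coset $gG_v$, grouping the $|G_v|$ terms in each coset yields
\[
\pair{x,v} v = |G_v| \sum_{\bar g \in G/G_v} (x, \bar g v)\, \bar g v,
\]
and dividing by $|G_v|$ recovers exactly the sum appearing in $\tilde T_\gamma(x) - x$, which establishes the formula. The only genuine content is the lifting step in the first paragraph; steps two and three are bookkeeping. I do not expect any serious obstacle, as both halves are essentially folklore (and the author cites \cite{looijenga} accordingly).
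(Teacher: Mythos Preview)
Your argument is correct. The paper does not actually supply a proof of this lemma: it simply records it as well-known and points to \cite[Section 3]{looijenga}, so there is nothing in the paper to compare your approach against. Your write-up is exactly the standard proof one would give, and your invocation of \Cref{lemma:cyclicstab} to identify the degree of each annular component with $|G_v|$ (via $G_v = \pair{\phi(\gamma_\bullet)}$) is the one nontrivial input needed.
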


The following lemma will allow us to assume that a ``certificate of primitivity'' for $v$ (i.e. a class $u$ with $\pair{u,v} = \zeta_v$) is relatively geometric.
\begin{lemma}
\label{lemma:betteru}
Let $u_1,v \in H_1(X;\Z)$ be given, and suppose that $\pair{u_1,v} = \zeta_v$. Then there is a stabilization $X^+$ and a homology class $u_2$ supported on $X^+ \setminus X$ such $u:= u_1 + u_2$ is relatively geometric, represented by $\eta \subset X^+$ with $f^+(\eta) \subset Y^+$ simple, and such that $\pair{u,v} = \zeta_v$.
\end{lemma}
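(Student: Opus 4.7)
The plan is to build $u$ from $u_1$ via the resolution process of \Cref{section:resolution}; all new homology will be supported on newly attached stabilization handles and therefore pair trivially with $v$, so the identity $\pair{u,v} = \zeta_v$ will be preserved automatically. First I would represent $u_1 \in H_1(X;\Z)$ by a simple oriented multicurve $\delta_0 \subset X$ (using parallel copies if $u_1$ is a nontrivial integer multiple of a primitive class); after a small perturbation we may assume the setwise stabilizer of $\delta_0$ is trivial, so $\zeta_{\delta_0} = 1$. Standard transversality makes $f(\delta_0)\subset Y$ an immersion with finitely many transverse self-crossings $p_1,\ldots,p_k$.

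Next I would apply the resolution process of \Cref{definition:resolution} iteratively at each $p_i$. Each step passes to a simple stabilization (attaching a basic $1$-handle), removes one self-crossing of the projection without creating new ones, preserves the number of components of the multicurve (\Cref{remark:components}), and by \Cref{lemma:resolution1} modifies the homology class by the addition of a term of the form $g_{1,i} x_i + g_{2,i} y_i$ supported entirely on the newly attached handle. After $k$ resolutions I obtain a stabilization $X^+$ and a simple multicurve $\delta_1 \subset X^+$ with $f^+(\delta_1) \subset Y^+$ simple, and $[\delta_1] = u_1 + u_2'$ for some class $u_2'$ supported on $X^+ \setminus X$.

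If $\delta_1$ has more than one component, I would connect them into a single simple closed curve using the connect-sum construction from the end of the proof of \Cref{lemma:purestable}: for each pair of consecutive components pick an embedded arc $\alpha \subset Y^+$ with interior disjoint from $f^+(\delta_1)$ and from the other chosen arcs, attach additional basic $g$-handles as necessary to arrange that the appropriate lift of $\alpha$ joins the chosen elevations in $X^+$, and form the connect-sum along that lift. The resulting $\eta \subset X^+$ is then a connected simple closed curve with $f^+(\eta)$ simple; since connect-summing along arcs does not alter the homology class of a multicurve, $[\eta] = u_1 + u_2$ for some $u_2$ supported entirely on $X^+ \setminus X$.

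Finally, since $v$ admits a cycle representative in $X \subset X^+$ while $u_2$ admits one supported in $X^+ \setminus X$, the local formula \Cref{lemma:localformula} gives $\pair{u_2, v} = 0$; combined with the naturality of the relative intersection pairing under the inclusion $X \hookrightarrow X^+$, this yields
\[
\pair{u, v} = \pair{u_1, v} + \pair{u_2, v} = \zeta_v,
\]
as desired. The main delicate step is the connect-sum construction: simultaneously arranging the disjointness conditions on the arcs $\alpha$ and the monodromy conditions required to route their lifts between the correct components typically forces the attachment of several further basic $g$-handles, but since all such handles live in $X^+ \setminus X$ they contribute nothing to the pairing with $v$, which is why this flexibility does not spoil the identity $\pair{u,v} = \zeta_v$.
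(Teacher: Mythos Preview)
Your argument is correct and follows the same core strategy as the paper: represent $u_1$ by a simple cycle, apply the resolution process to eliminate self-crossings of the projection, and observe that all homology added lives on the new handles and hence pairs trivially with $v$.

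The one point where you take a longer route is in allowing $\delta_0$ to be a multicurve and then performing a connect-sum step at the end. The paper avoids this by first observing that the hypothesis $\pair{u_1,v} = \zeta_v$ already forces $u_1$ to be integrally primitive with trivial stabilizer $G_{u_1} = \pair{1}$. Indeed, the coefficient of $1 \in G$ in $\zeta_v$ is $1$, so $(u_1,v) = 1$, which rules out $u_1 = k u_1'$ for $\abs{k} > 1$; and if $g \in G_{u_1}$ then $g\zeta_v = \zeta_v$ gives $g \in G_v$, whence $(u_1,v)$ is divisible by $\abs{\pair{g}}$ via the transfer, forcing $g = 1$. Consequently $u_1$ is represented by a \emph{single} simple closed curve $\eta_1$, the resolution process outputs a connected curve, and no connect-sum is needed. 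Your extra step is not wrong---it is simply vacuous once this observation is made.
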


\begin{proof}
First, observe that the hypothesis $\pair{u_1,v} = \zeta_v$ implies that $u_1$ is integrally primitive and that $G_{u_1} = \pair{1}$. In particular, $u_1$ can be represented by a simple closed curve $\eta_1 \subset X$ in general position. The self-crossings of $f^+(\eta_1)$ can be resolved by the resolution process (\Cref{definition:resolution}), leading to a simple closed curve $\eta \subset X^+$ supported on some stabilization $X^+$ of $X$, such that $f^+(\eta)\subset Y^+$ is simple. \Cref{lemma:resolution1} shows that on the level of homology, 
\[
[\eta] = [\eta_1] + \sum (g_{i,1} x_i + g_{i,2} y_i)
\]
with $x_i, y_i$ forming a basis for the $i^{th}$ handle added in the resolution process. Such classes are indeed supported on $X^+ \setminus X$ as claimed, and hence $\pair{u,v} = \pair{u_1, v} = \zeta_v$ as required.
\end{proof}

\subsection{Proof of \Cref{maintheorem:sufficient}}
By \Cref{lemma:coherenceconnected}, since $v$ is coherent, it admits a representative of the form $v = [\delta_0]$ for some simple closed curve $\delta_0$ that is invariant under the action of $G_v$; following \Cref{standingconventions}, we assume that $\delta_0$ is in general position. The projection $f(\delta_0)$ to $Y$ then has a finite number of self-crossings $P = \{p_1, \dots, p_k\} \subset Y$. 

Let $G_vg_iG_v$ be the local sheet index at $p_i$. By hypothesis, $v$ is isotropic, and so by \Cref{lemma:localformula},
\[
\pair{v,v} = \sum_{i = 1}^k \zeta_v (g_i - g_i^{-1}) \zeta_v = 0.
\]
Likewise, $v$ is assumed to be even, and so by \Cref{lemma:qlocal}, for each $g \in G_2^*$, the number of appearances of $G_v g G_v$ among the local sheet indices $\{G_v g_i G_v\}$ is even.

By \Cref{lemma:choosereps}, there exist representatives $g_i \in G_v g_i G_v$ for which
\begin{equation}\label{equation:gsum0}
\sum_{i = 1}^k (g_i - g_i^{-1}) = 0,
\end{equation}
and such that each $g \in G_2^*$ appears as an even number of the elements $g_i$.

For each self-crossing $p_i \in P$, we perform the resolution procedure of \Cref{lemma:resolution} using an arc $\alpha_i$ so that the class added is of the form $\zeta_v(x_i + g_i y_i)$ (here $x_i, y_i$ generate the homology of the $i^{th}$ stabilization). Since the resolution process does not alter the number of components (\Cref{remark:components}), the result is a simple closed curve $\delta_1$ with $f^+(\delta_1)$ possessing no self-crossings. By \Cref{lemma:resolution}, $[\delta_1] \in H_1(X^+;\Z)$ is given by
\[
[\delta_1] = [\delta_0] + \sum_{i = 1}^k \zeta_v(x_i + g_i y_i) = v + \sum_{i = 1}^k \zeta_v(x_i + g_i y_i).
\]

Define
\[
w = \sum_{i = 1}^k (x_i + g_i y_i),
\]

so that $[\delta_1] = v + \zeta_v w$. Then $w$ is a purely-unital vector by construction. We claim that $w$ is relatively geometric, i.e. satisfies the conditions of Lemma \ref{lemma:purestable}. Recall from \Cref{remark:trivstab} that every purely-unital vector is necessarily primitive and coherent. We claim that $w$ is isotropic:
\begin{align*}
\pair{w,w}  &= \pair{\sum_{i = 1}^k (x_i + g_i y_i), \sum_{j = 1}^k (x_j + g_j y_j)}\\ 
            &= \sum_{i,j} \pair{x_i + g_iy_i, x_j + g_j y_j}\\
            &= \sum_{i = 1}^k \pair{x_i + g_i y_i, x_i + g_i y_i}\\
            &= \sum_{i = 1}^k (g_i^{-1} - g_i)\\
            &= 0,
\end{align*}
the latter holding by \eqref{equation:gsum0}. Lastly we claim that $w$ is even, so that $q_g(w) = 0$ for all $g \in G_2^*$. Using the properties of $q_g$ developed in \Cref{lemma:qprops} and the fact that each $g \in G_2^*$ appears as $g = g_i$ an even number of times,
\begin{align*}
q_g(w)      &= q_g\left(\sum_{i = 1}^k (x_i + g_i y_i)\right)\\ 
            &= \sum_{i =1}^k q_g(x_i+g_i y_i)\\
            &= \sum_{i = 1}^k (x_i,g g_iy_i)\\
            &= (\# \mbox{ indices $i$ for which $g_i = g$}) \pmod 2\\
            &= 0.
\end{align*}
Thus by \Cref{lemma:purestable}, $w$ is relatively geometric: $w = [\omega]$ for a simple closed curve $\omega \subset X^+$ such that $f^+(\omega) \subset Y^+$ is simple. 

We next invoke the hypothesis that $v$ is primitive. To that end, let $u_1 \in H_1(X^+,\Z)$ be such that $\pair{u_1,v} = \zeta_v$. Represent $u_1$ by a simple closed curve $\eta \subset X^+$. By \Cref{lemma:betteru}, $u_1$ can be replaced by a {\em relatively-geometric} class $u = [\eta]$ on some further stabilization $X^+$ of $X$, such that $\pair{u,v} = \zeta_v$. It is possible to construct $\eta$ so that $f^+(\eta)$ and $f^+(\omega)$ are disjoint: each is constructed via a stabilization process on disjoint subsurfaces, and neither class separates, so it is possible to resolve crossings of one without passing through the other on the way to the boundary. Possibly after taking one last stabilization, one can construct an arc connecting $f^+(\eta)$ to $f^+(\omega)$ so that the connect-sum of $f^+(\eta), f^+(\omega)$ along this arc has an elevation $\epsilon$ in the homology class $[\eta] + [\omega] = u + w$.

To summarize, we have shown that the classes $v+ \zeta_v w$, $u$ and $u+w$ are relatively geometric, represented respectively by curves $\delta_1, \eta, \epsilon$. The former has stabilizer subgroup $G_v$, and the latter two have trivial stabilizer. By \Cref{lemma:dehntwist}, the Dehn twists $T_\epsilon$ and $T_\eta$ lift to diffeomorphisms $\tilde{T_\epsilon}, \tilde{T_\eta}$ on $X^+$, and the action on homology of $\tilde{T_\epsilon} \tilde{T_\eta}^{-1}$ is given as
\begin{align*}
\tilde{T_\epsilon} \tilde{T_\eta}^{-1}(v+ \zeta_v w) &= \tilde{T_\epsilon} \left(v+ \zeta_v w - \pair{v+\zeta_v w,u}u \right)\\
    &= \tilde{T_\epsilon} \left(v+ \zeta_v (u + w) \right)\\
    &= v+ \zeta_v (u + w) + \pair{v + \zeta_v (u+w), u+w}(u+w)\\
    &= v.
\end{align*}
Thus $v$ admits a representative $\tilde{T_\epsilon} \tilde{T_\eta}^{-1}(\delta_1)$ as a simple closed curve with simple projection: $v$ is relatively geometric as claimed. \qed

	\bibliographystyle{alpha}
	\bibliography{bibliography}
	
\end{document}